\documentclass[onefignum,onetabnum]{siamart190516}

\usepackage{lipsum}
\usepackage{amsmath,bm}
\usepackage{amsfonts}
\usepackage{graphicx}
\usepackage{wrapfig}
\usepackage{psfrag}
\usepackage{tikz}
\usetikzlibrary{calc,positioning}
\definecolor{corange}{rgb}{0.93, 0.57, 0.13}
\usepackage{simplewick}
\usepackage{mathrsfs}
\usepackage{url,hyperref}
\usepackage{setspace}
\usepackage{pifont}

\usepackage{algorithm}
\usepackage{algpseudocode}
\usepackage{amssymb}
\usepackage{booktabs}
\usepackage{comment}
\usepackage{subfigure}
\usepackage{float}
\usepackage{cite}
\usepackage{epstopdf}

\usepackage{tikz}

\allowdisplaybreaks[4]

\newtheorem{assumption}{Assumption}[section]

\def\d{\mathrm{d}}

\usepackage{lipsum}
\usepackage{amsfonts}
\usepackage{graphicx}
\usepackage{epstopdf}
\ifpdf
\DeclareGraphicsExtensions{.eps,.pdf,.png,.jpg}
\else
\DeclareGraphicsExtensions{.eps}
\fi

\newsiamremark{remark}{Remark}
\newsiamremark{hypothesis}{Hypothesis}
\crefname{hypothesis}{Hypothesis}{Hypotheses}
\newsiamthm{claim}{Claim}

\headers{Projected Walk on Spheres on Embedded Manifolds}{Z. Hui, C. Sheng, and  B. Su, and T. Zhou}

\title{A modified projected walk on spheres method for elliptic equations on high-dimensional embedded manifolds: algorithm and error estimates\thanks{Submitted to the editors DATE.
\funding{The work of Z. Hui is partially supported by the Fundamental Research Funds for the Central Universities (No. CXJJ-2024-437). The work of C. Sheng is supported by the National Natural Science Foundation of China (No. 12571389). The work of B. Su is supported by the National Natural Science Foundation of China (No. 12501541), the Scientific Research Start-up Funds of Hainan University (No. XJ2400010491). The work of T. Zhou is supported by the NSF of China (Nos. 12288201 and 12461160275). ${}^{\star}$Corresponding author. }
}}

\author{Zhiyuan Hui\thanks{School of Mathematics, Shanghai University of Finance and Economics, Shanghai 200433, China. Email: \email{202321\allowbreak3122@stu.sufe.edu.cn} (Z. Hui); \email{ctsheng@sufe.edu.cn} (C. Sheng).}
\and Changtao Sheng$^{\dagger,\star}$, 
Bihao Su\thanks{School of Mathematics and Statistics, Hainan University, Haikou 570100, China. Email: \email{bihaosu@hainanu.edu.\allowbreak cn} (B. Su).}
\and Tao Zhou\thanks{Institute of Computational Mathematics and Scientific/Engineering Computing, Academy of Mathematics and Systems Science, Chinese Academy of Sciences, Beijing 100190, China. Email: \email{tzhou@lsec.cc.ac.cn} (T. Zhou).}
}

\usepackage{amsopn}

\begin{document}
% \nolinenumbers

\maketitle

% REQUIRED
\begin{abstract}
In this paper, we propose a modified projected Walk on Spheres method (MPWoS) for screened Poisson equations on embedded manifolds. The method employs local extensions together with the Green representation in local Euclidean balls, coupled with a closest-point projection that maps the boundary samples back to the manifold. This formulation yields a meshfree and highly parallelizable stochastic recursion in the ambient Euclidean space, rather than a direct discretization of the Laplace--Beltrami operator on the manifold. The proposed approach can be viewed as a high-dimensional extension and modification of the projected Walk on Spheres method introduced for surface PDEs in [Sugimoto et al., SIGGRAPH Asia 2024 Conference Papers, pp. 1--10], with three main distinctions: a compensation term that corrects the discrepancy between the ambient Laplacian applied to the closest-point extension and the intrinsic Laplace--Beltrami operator on the manifold, an adaptive radius strategy determined by local geometric and boundary information, and a rigorous error analysis for the proposed algorithm. Under assumptions on the geometric projection and the prescribed compensation accuracy, we establish mean-square error estimates for the proposed Monte Carlo method in both the boundary and closed-manifold settings. Extensive numerical examples on parametrized, implicit, high-dimensional (up to 1000 dimensions), and point-cloud manifolds are presented to illustrate the convergence and efficiency of the proposed method across different geometries.
%In this paper, we propose a modified projected Walk on Spheres method (MPWoS) for screened Poisson equations on embedded manifolds. The method employs local extensions together with the Green representation in local Euclidean balls, coupled with a closest-point projection that maps the boundary samples back to the manifold. This formulation yields a meshfree and highly parallelizable stochastic recursion in the ambient Euclidean space, rather than a direct discretization of the Laplace--Beltrami operator on the manifold. To recover the intrinsic geometric structure of the problem, we introduce a compensation term for the discrepancy between the ambient Laplacian of the closest-point extension and the Laplace--Beltrami operator on the manifold, together with an adaptive radius strategy determined by local geometric and boundary information. Under the geometric projection and prescribed compensation accuracy, we establish mean-square error estimates for the proposed Monte Carlo method in both the boundary and closed-manifold settings. Extensive numerical examples on parametrized, implicit, high-dimensional (up to 1000 dimensions), and point-cloud manifolds are presented to illustrate the convergence and efficiency of the proposed method across different geometries.
\end{abstract}

% REQUIRED
\begin{keywords}
Screened Poisson equation, Embedded manifold, Green function, Closest-point projection, Monte Carlo method, Error estimate
\end{keywords}

% REQUIRED
\begin{AMS}
58J32, 65C05, 65N75, 65N80, 65L70
\end{AMS}

\section{Introduction}

Partial differential equations posed on manifolds arise in many problems in the natural and applied sciences, including biological modeling, imaging, computer graphics, and fluid dynamics. Representative applications include bulk--surface transport and reaction--diffusion processes on curved membranes and tissues, where such equations are used to describe signaling, chemotaxis, and pattern formation \cite{MacdonaldMerrimanRuuth2013,MackenzieNolanInsall2016}; texture synthesis and brain surface mapping in imaging and computer graphics \cite{Turk1991,DiewaldPreuferRumpf2000,AngenentEtAl1999,GuEtAl2004}; as well as thin-film flow and surfactant transport on evolving interfaces in fluid dynamics and materials science \cite{MyersCharpinChapman2002,XuZhao2003}.

Over the past decades, a large variety of deterministic numerical methods have been developed for solving partial differential equations posed on surfaces and manifolds. Existing approaches may broadly be classified according to the geometric representation of the manifold, including parametrized or mesh-based manifolds, implicitly defined manifolds, and point clouds. For parametrized or mesh-based manifolds, one may work directly with local coordinates, coordinate transformations, or discrete surface meshes, leading to surface finite element, surface finite difference, and related intrinsic discretization methods (see, e.g., \cite{Dziuk2006,DziukElliott2013,Demlow2009,DziukElliott2007,BarreiraElliottMadzvamuse2011,HuJiaoKongWang2026,DziukElliott2007evolving} and the references therein). For implicitly defined manifolds, including those represented in level-set form, one may work directly in the ambient space, leading to embedding approaches such as implicit-surface formulations, closest-point methods, and unfitted or trace finite element methods (see, e.g., \cite{BertalmioChengOsherSapiro2001,RuuthMerriman2008,MacdonaldRuuth2008,MacdonaldRuuth2010,OlshanskiiReusken2018,GrandeLehrenfeldReusken2018,PetrasLingRuuth2018} and the references therein). For manifolds represented only by scattered samples, meshfree and point-cloud methods have also been developed based on local reconstruction, graph Laplacians, point integral methods, and generalized moving least-squares approximations (see, e.g., \cite{LiangZhao2013,BelkinNiyogi2008,PetronettoEtAl2013,LiShiSun2017,WangLeungZhao2018,GrossTraskKuberryAtzberger2020,JiangLiYanHarlim2024,LiYanJiang2026} and the references therein). Nevertheless, these methods still rely strongly on geometric representations and accurate approximations of geometric quantities and differential operators. Parametrization and mesh-based methods often require high-quality surface meshes or carefully constructed local coordinates; implicit methods depend on ambient-space or narrow-band discretizations together with accurate geometric descriptions; point-cloud methods typically require reliable local reconstruction or approximation of differential operators from scattered data. These difficulties become more pronounced in high-dimensional settings, where geometric representation, reconstruction, and global discretization are generally harder to perform efficiently, with the assembly and solution of global linear systems becoming increasingly costly for complex geometries or local pointwise computations.

Unlike the deterministic methods discussed above, stochastic algorithms provide an alternative approach for solving linear partial differential equations through the simulation of stochastic processes. Their theoretical foundation is the Feynman--Kac formula, which connects partial differential equations with probabilistic representations and enables the approximation of solutions through Monte Carlo sampling of stochastic paths. Based on this idea, stochastic algorithms have been widely developed for classical elliptic and parabolic equations, and exhibit advantages in high-dimensional problems, complex geometries, and local pointwise computations (cf. \cite{ShengSuXu2023}). Similar ideas can further be extended to PDEs posed on manifolds, where probabilistic representations may likewise be expressed in terms of Brownian motion on the manifold (see, e.g., \cite{Hsu2002}). Direct numerical simulation of manifold Brownian motion, however, is often nontrivial, since the stochastic process must remain compatible with the underlying geometry and typically requires intrinsic geometric information or suitable projection mechanisms. This motivates the development of stochastic methods that preserve the local and meshfree nature of Monte Carlo sampling while remaining suitable for computation on manifolds.

Among stochastic methods for elliptic equations in Euclidean domains, one of the earliest and most influential approaches is the Walk on Spheres (WoS) method introduced by Muller \cite{Muller1956} for the Dirichlet problem. Since then, Monte Carlo methods for elliptic equations have been systematically developed and studied (see \cite{Sabelfeld1991}). Early developments include the Monte Carlo method of DeLaurentis et al.\,\cite{DeLaurentisRomero1990} for local solutions of the Poisson equation with Dirichlet boundary conditions, while later work investigated the mathematical properties of WoS algorithms, including analysis of the $\varepsilon$-shell error in \cite{MascagniHwang2003}. More recently, WoS methods have been further developed with improved efficiency and extensions to screened Poisson equations (see, e.g., \cite{MillerSawhneyCraneGkioulekas2023,MillerSawhneyCraneGkioulekas2024} and the references therein). In addition, WoS ideas have also been extended to fractional PDEs (see, e.g., \cite{KyprianouOsojnikShardlow2018,ShengSuXu2023,HuiLiuShengXu2026}), which further illustrates the effectiveness of stochastic methods as local and meshfree solvers, particularly for pointwise evaluation and nonlocal problems. For PDEs posed on surfaces, however, WoS-type methods remain much less developed, with a recent contribution being the PWoS method of Sugimoto et al. \cite{Sugimoto2024}, which combines Monte Carlo sampling with the closest-point framework for solving surface PDEs. We also refer to related stochastic methods and probabilistic representations for PDEs on surfaces and manifolds \cite{Hsu2002,Guneysu2012}. Despite this progress, existing stochastic methods on manifolds remain relatively limited and often depend strongly on specific geometric representations or surface formulations, while rigorous convergence and error analysis for such stochastic algorithms still require further development.

The main purpose of this paper is to develop a modified projected Walk on Spheres method for solving the screened Poisson equation on embedded manifolds, together with a rigorous theoretical analysis of the proposed method. %Compared with the PWoS method for surface PDEs \cite{Sugimoto2024}, the present work further develops the framework in several directions. In particular, we derive a local probabilistic representation and the associated projected recursion for the screened Poisson equation on embedded manifolds, introduce a compensation mechanism, replace the medial axis based radius construction by a reach motivated adaptive radius strategy that is more interpretable and more suitable for high dimensional settings, extend the method from the surface setting to more general embedded manifolds, including parametrized, implicit, and point cloud manifolds as well as high dimensional cases, and establish mean square error estimates for the resulting Monte Carlo estimator. The main contributions and advantages of the proposed method can be summarized as follows.
To the best of our knowledge, several important questions remain open: 
i) Existing methods mainly focus on surface Laplace--Beltrami problems, while screened Poisson equations on more general embedded manifolds remain less explored.
ii) The discrepancy between the ambient Laplacian and the intrinsic manifold operator has not been systematically incorporated into the projected framework.
iii) The medial-axis- and local-feature-size-based radius construction may become computationally expensive and less transparent in high-dimensional settings.
iv) The extension to general embedded manifolds and high-dimensional settings still lacks a systematic theoretical framework. The main contributions and advantages of our new method can be summarized as follows.
\begin{itemize}

\item {\underline{Meshfree and parallelizable solver}:}
We derive a local Green representation for the screened Poisson equation on embedded manifolds and construct the associated MPWoS recursion. The resulting method is meshfree, avoids global linear systems, and is naturally suitable for parallel computation. Consequently, the method can efficiently handle high-dimensional problems (up to $1000$ dimensions), manifolds with complicated geometries, and point-cloud manifolds on a personal computer.

%\item \underline{Off-surface compensation:}
%We introduce an off-surface compensation term that accounts for the discrepancy between the ambient Laplacian acting on the closest-point extension and the intrinsic Laplace--Beltrami operator.
\item \underline{Extension-based compensation:}
We introduce a compensation term associated with the closest-point extension, which corrects the discrepancy between the ambient Laplacian applied to the extended function and the intrinsic Laplace--Beltrami operator on the embedded manifold.

\item \underline{Reach-based adaptive radius:}
%We propose a reach-motivated adaptive radius rule based on local boundary and geometric information, avoiding the direct computation of the medial axis or local feature size.
We propose a reach-motivated adaptive radius rule based only on local boundary and geometric information, avoiding the direct computation of the medial axis or local feature size, which is often impractical for high-dimensional embedded manifolds.

\item \underline{Error analysis:}
We prove mean-square error bounds for the proposed scheme under geometric projection and controlled compensation accuracy, and validate the method on parametrized, implicit, and point-cloud geometries.

\end{itemize}

The remainder of this paper is organized as follows. Section~2 introduces the geometric setting, the closest-point extension, and the projected Walk on Spheres algorithm for the screened Poisson equation on manifolds. Section~3 contains the error analysis of the proposed method. Numerical experiments are reported in Section~4 to demonstrate the performance of the method on manifolds of different dimensions and geometric complexity. Finally, concluding remarks are given in Section~5.

\vspace{-4pt}
\section{Modified projected Monte Carlo method for PDEs on embedded manifolds}\label{sec:algorithm}
This section derives the projected probabilistic representation and the associated Monte Carlo scheme. We first reformulate the screened Poisson equation using the closest-point extension, which was originally developed for surface PDEs in \cite{RuuthMerriman2008}. We then apply the Green representation in local Euclidean balls and project the resulting local boundary samples back to the manifold. This procedure yields a stopped stochastic recursion, which forms the basis of the subsequent error analysis.
\subsection{Problem setting and geometric framework}
Let $\mathcal M\subset\mathbb R^n$ be a connected compact embedded submanifold of dimension $d$. Throughout this paper, we consider the following two cases:
\begin{list}{$\diamond$}{%
\usecounter{enumi}
\setlength{\leftmargin}{3.0em}
\setlength{\labelwidth}{2.0em}
\setlength{\labelsep}{0.8em}
\setlength{\itemsep}{2pt}
\setlength{\topsep}{4pt}
}
\item Manifold-with-boundary case:
$\Omega\subset\mathcal M$ is a manifold patch with boundary
$\Gamma:=\partial\Omega\neq\emptyset$;

\item Closed-manifold case:
$\Omega=\mathcal M$ and $\partial\Omega=\emptyset$.
\end{list}
For clarity, we mainly focus on the manifold-with-boundary case and comment on the modifications for closed manifolds when appropriate.
We consider the screened Poisson equation
\begin{equation}\label{eq:surface_pde}
-\Delta_{\Omega}u_{\Omega}(y)+\sigma u_{\Omega}(y)=f_{\Omega}(y),\quad y\in\Omega.
\end{equation}
When $\Omega$ has boundary $\Gamma$ (i.e., {Case i}), it is supplemented with the Dirichlet boundary condition
\begin{equation}\label{eq:surface_bc}
u_{\Omega}(y)=g_{\Omega}(y),\quad y\in\Gamma.
\end{equation}
Here $\Delta_{\Omega}$ denotes the Laplace--Beltrami operator on $\Omega$, and $\sigma>0$ is a given constant. For a sufficiently smooth function $v$ on $\Omega$, it is given by $\Delta_\Omega v=\nabla_\Omega\cdot\nabla_\Omega v.$ 
We assume that $f_\Omega\in L^\infty(\Omega)$ is a given source term, and when $\Omega$ has boundary $\Gamma$, $g_\Omega\in L^\infty(\Gamma)$ is the prescribed Dirichlet datum.

%The proposed framework applies to the following three commonly used geometric representations:
%\vspace{-16pt}
%\begin{list}{(\roman{enumi})}{%
%\usecounter{enumi}
%\setlength{\leftmargin}{3.0em}
%\setlength{\labelwidth}{2.0em}
%\setlength{\labelsep}{0.8em}
%\setlength{\itemsep}{2pt}
%\setlength{\topsep}{4pt}
%}
%
%\item Parameterized manifold
%\begin{equation}\label{eq:parametric_manifold}
%\mathcal M=\{X(\xi):\xi\in U\subset\mathbb R^d\},
%\end{equation}
%where $X:U\to\mathbb R^n$ is a sufficiently smooth map;
%
%\item Implicitly represented manifold
%\begin{equation}\label{eq:implicit_manifold}
%\mathcal M=\{x\in\mathbb R^n:\Phi(x)=0\},
%\qquad
%\Phi=(\phi_1,\dots,\phi_{n-d}),
%\end{equation}
%with $\nabla\phi_1,\dots,\nabla\phi_{n-d}$ linearly independent on $\mathcal M$; in the codimension-one case, this reduces to the level-set form
%\begin{equation}\label{eq:levelset_manifold}
%\mathcal M=\{x\in\mathbb R^n:\phi(x)=0\};
%\end{equation}
%
%\item Manifold represented by a point cloud
%\begin{equation}\label{eq:pointcloud_manifold}
%\mathcal P=\{p_i\}_{i=1}^N\subset\mathbb R^n.
%\end{equation}
%
%\end{list}
The proposed framework applies to the following three commonly used geometric representations:
\vspace{-3pt}
\begin{list}{$\circ$}{%
\setlength{\leftmargin}{3.0em}
\setlength{\labelwidth}{1.5em}
\setlength{\labelsep}{0.8em}
\setlength{\itemsep}{2pt}
\setlength{\topsep}{4pt}
}

\item Parameterized manifold
%\begin{equation}\label{eq:parametric_manifold}
$\mathcal M=\{X(\xi):\xi\in U\subset\mathbb R^d\},$ 
%\end{equation}
where $X:U\to\mathbb R^n$ is a sufficiently smooth map;

\item Implicitly represented manifold
%\begin{equation}\label{eq:implicit_manifold}
$\mathcal M=\{x\in\mathbb R^n:\Phi(x)=0\},$
%\qquad
$\Phi=(\phi_1,\dots,\phi_{n-d}),$
%\end{equation}
with $\nabla\phi_1,\dots,\nabla\phi_{n-d}$ linearly independent on $\mathcal M$; in the codimension-one case, this reduces to the level-set form
%\begin{equation}\label{eq:levelset_manifold}
$\mathcal M=\{x\in\mathbb R^n:\phi(x)=0\};$
%\end{equation}

\item Manifold represented by a point cloud
%\begin{equation}\label{eq:pointcloud_manifold}
$\mathcal P=\{p_i\}_{i=1}^N\subset\mathbb R^n.$
%\end{equation}

\end{list}

Since the subsequent derivation is based on local sampling in Euclidean space, we work in a tubular neighborhood of $\mathcal M$ and employ the closest-point extension to formulate the projected recursion. Accordingly, we assume that $\mathcal M$ and $\Omega$ are sufficiently smooth so that the closest-point map is well defined and smooth in a tubular neighborhood of $\mathcal M$. To distinguish intrinsic points on the manifold from points in the embedding space, we use $y$ to denote points on $\mathcal M$, while $x$ denotes generic points in $\mathbb R^n$. Assume that there exists $\delta>0$, chosen below the reach of $\mathcal M$, such that
\begin{equation}
U_{\delta}:=\big\{x\in\mathbb{R}^{n}:\operatorname{dist}(x,\mathcal{M})<\delta\big\}
\end{equation}
is a tubular neighborhood of $\mathcal M$, so that every $x\in U_\delta$ has a unique nearest point on $\mathcal M$. The associated closest-point map
\begin{equation}\label{eq:cp_map}
\operatorname{cp}:U_{\delta}\to\mathcal{M},
\quad
\operatorname{cp}(x):=\arg\min_{y\in\mathcal M}|x-y|,
\end{equation}
is assumed to be sufficiently smooth. For each $y\in\mathcal M$, let $T_y\mathcal M$ denote the tangent space at $y$ and
$N_y\mathcal M:=(T_y\mathcal M)^\perp\subset\mathbb R^n$ the normal space.
Then every $x\in U_\delta$ admits the representation
% \begin{equation}\label{eq:normal_representation}
% x=\operatorname{cp}(x)+\eta(x),
% \quad
% \eta(x)=\operatorname{dist}(x,\mathcal M)\in N_{\operatorname{cp}(x)}\mathcal M.
% \end{equation}
\begin{equation}\label{eq:normal_representation}
x=\operatorname{cp}(x)+\eta(x),
\quad
\eta(x):=x-\operatorname{cp}(x)\in
N_{\operatorname{cp}(x)}\mathcal M,
\end{equation}
with $|\eta(x)|=\operatorname{dist}(x,\mathcal M)$.
%with $|\eta(x)|=\operatorname{dist}(x,\mathcal M)$.

To formulate the projected recursion in the ambient Euclidean space, we extend functions on $\Omega$ to the tubular neighborhood via $\operatorname{cp}$. Define $U_{\delta}^{\Omega}:=\{x\in U_{\delta}:\operatorname{cp}(x)\in\Omega\},$
and, for any function $v$ on $\Omega$, set
\begin{equation}\label{eq:cp_extension}
Ev(x):=v(\operatorname{cp}(x)),\quad x\in U_{\delta}^{\Omega}.
\end{equation}
By \eqref{eq:normal_representation}, the closest-point extension is constant along normal fibers of $\mathcal M$. In particular,
$$u(x):=Eu_{\Omega}(x)=u_{\Omega}(\operatorname{cp}(x)),\quad f(x):=Ef_{\Omega}(x)=f_{\Omega}(\operatorname{cp}(x)),$$
and, when $\Omega$ has boundary $\Gamma$, $g(x):=Eg_{\Omega}(x)=g_{\Omega}(\operatorname{cp}(x)).$
Then, one has the standard identity
\begin{equation}\label{eq:LB_cp_identity}
\Delta_{\Omega}u_{\Omega}(y)=\Delta u(y),\quad y\in\Omega,
\end{equation}
where $\Delta$ denotes the Euclidean Laplacian in $\mathbb R^n$. However, away from the manifold, the ambient operator acting on the closest-point extension generally does not reproduce the intrinsic surface operator. We therefore introduce the compensation term, defined by
\begin{equation}\label{eq:mismatch_definition}
m(x):=-\Delta u(x)+\sigma u(x)-f(x),\quad x\in U_\delta^\Omega.
\end{equation}
It follows from \eqref{eq:LB_cp_identity} that $m(y)=0$ for all $y\in\Omega$.  

Thus the closest-point extension of \eqref{eq:surface_pde} and \eqref{eq:surface_bc} satisfies the compensated ambient equation
\begin{equation}\label{eq:embedding_problem}
-\Delta u(x)+\sigma u(x)=f(x)+m(x),
\quad x\in U_\delta^\Omega ,
\end{equation}
where, in the case $\Gamma\neq\emptyset$, the corresponding extended boundary is given by
\begin{equation}\label{eq:Gamma_ext_definition}
u(x)=g(x),\quad x\in\Gamma^{\mathrm{ext}}:=\big\{x\in U_\delta^\Omega:\operatorname{cp}(x)\in\Gamma\big\}.
\end{equation}
Note that, in the closed manifold case, we set $\Gamma^{\mathrm{ext}}=\emptyset$.
Based on the embedding-space equation \eqref{eq:embedding_problem} and the extended boundary \eqref{eq:Gamma_ext_definition}, we next proceed with the theoretical derivation of the MPWoS method.

%For notational convenience, in the case of manifolds with boundary we define the extended boundary by
%\begin{equation}\label{eq:Gamma_ext_definition}
%\Gamma^{\mathrm{ext}}
%:=
%\{x\in U_\delta^\Omega:\operatorname{cp}(x)\in\Gamma\},
%\end{equation}
%while in the closed manifold case we set $\Gamma^{\mathrm{ext}}=\emptyset$.

%When $\Omega$ has boundary $\Gamma$, let $\delta_\Gamma(y):=\operatorname{dist}(y,\Gamma)$ for $y$ near $\Gamma$. We assume that $u_\Omega$ is Lipschitz continuous near $\Gamma$, so that there exists a constant $L_\Gamma>0$ such that
%\begin{equation}\label{u_lip}
%    |u_\Omega(y)-u_\Omega(\operatorname{cp}_\Gamma(y))|
%\le
%L_\Gamma\,\delta_\Gamma(y),
%\qquad y\in\Gamma_\varepsilon.
%\end{equation}
%
%The screened Poisson equation in the embedding space takes the form
%\begin{equation}\label{eq:embedding_problem}
%-\Delta u(x)+\sigma u(x)=f(x)+m(x),
%\qquad x\in U_\delta^\Omega\setminus\Gamma^{\mathrm{ext}}.
%\end{equation}
%When $\Omega$ has boundary $\Gamma$, it is supplemented with the boundary condition
%\begin{equation}\label{eq:embedding_bc}
%u(x)=g(x),
%\qquad x\in\Gamma^{\mathrm{ext}}.
%\end{equation}
%A suitable approximation of the compensation term $m$ will be specified later and used in the projected Walk on Spheres construction.
%With these geometric ingredients and the reformulated equation in the embedding space at hand, we are now in a position to introduce the projected Walk on Spheres recursion.

\subsection{Local Green representation and projected recursion}
In this subsection, we derive the conditional-expectation representation and projected recursion underlying the reformulated screened Poisson equation. The derivation is based on a local Green representation in Euclidean balls within the tubular neighborhood, combined with closest-point projection of the resulting boundary samples back to the manifold. 

For each interior point $y\in\Omega$, let $\mathbb{B}^{n}_{r(y)}(y):=\{x\in\mathbb{R}^{n}:|x-y|<r(y)\}$ be a Euclidean ball centered at $y$ with radius $r(y)$ satisfying
\begin{equation}\label{eq:admissible_ball}
\mathbb{B}_{r(y)}^{n}(y)\subset U_\delta^\Omega\; \;\text{and}\;\;\;\mathbb{B}_{r(y)}^{n}(y)\cap \Gamma^{\mathrm{ext}}=\emptyset\quad\text{when }\Gamma\neq\emptyset.
\end{equation}

The precise choice of $r(y)$ will be specified in Section~\ref{sec:radius_selection}. At the present stage, it suffices to assume that the admissible ball remains inside the tubular neighborhood and, when $\Gamma\neq\emptyset$, stays away from the extended boundary.

We begin with a local representation formula in the ball $\mathbb{B}_{r(y)}^{n}(y)$.

\begin{lemma}\label{lem:local_ball_representation}
Let $y$ be an interior point of $\Omega$, and suppose that \eqref{eq:admissible_ball} holds. Then the solution of \eqref{eq:embedding_problem}, together with \eqref{eq:Gamma_ext_definition}, admits the integral representation
\begin{equation}\label{eq:local_ball_representation}
u(y)
=
c_{\sigma,r(y)}
\frac{1}{|\partial \mathbb{B}_{r(y)}^{n}(y)|}
\int_{\partial \mathbb{B}_{r(y)}^{n}(y)} u(z)\,\d S(z)
+
\int_{\mathbb{B}_{r(y)}^{n}(y)} \mathcal{G}(y,z)\,\bigl(f(z)+m(z)\bigr)\,\d z,
\end{equation}
where $\mathcal{G}(y,z)$ denotes the Dirichlet Green function associated with the operator $-\Delta+\sigma$ in the ball $\mathbb{B}_{r(y)}^{n}(y)$ evaluated at the center point $y$, and the constant 
\begin{equation}\label{eq:def_c_sigma_r}
c_{\sigma,r}
=
\frac{(\sqrt{\sigma}\,r)^{\nu}}{2^{\nu}\Gamma(\nu+1)\,I_{\nu}(\sqrt{\sigma}\,r)},
\quad
\nu=\frac{n-2}{2}.
\end{equation}
Here $I_{\nu}$ denotes the modified Bessel function of the first kind.
\end{lemma}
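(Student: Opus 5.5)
We prove the identity by applying Green's representation formula for $-\Delta+\sigma$ on the ball $B:=\mathbb{B}^n_{r(y)}(y)$, evaluated at its center. By \eqref{eq:admissible_ball}, the closed ball (after shrinking $r$ slightly if needed and passing to the limit at the end) lies in $U_\delta^\Omega$ and avoids $\Gamma^{\mathrm{ext}}$. The closest-point map is smooth there, so $u=Eu_\Omega$ is smooth on $\overline B$ whenever $u_\Omega$ is. Moreover, by the very definition \eqref{eq:mismatch_definition} of $m$, $u$ satisfies \eqref{eq:embedding_problem} pointwise in $B$. Thus the lemma reduces to a purely Euclidean statement: every $C^2(\overline B)$ function $w$ satisfies
\begin{equation*}
w(y)=\int_{\partial B}\bigl(-\partial_{\nu_z}\mathcal G(y,z)\bigr)w(z)\,\d S(z)+\int_B\mathcal G(y,z)\,(-\Delta w+\sigma w)(z)\,\d z,
\end{equation*}
which we apply with $w=u$ and $-\Delta u+\sigma u=f+m$.

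First, we establish the formula above. Let $\mathcal G(y,\cdot)$ be the Dirichlet Green function of $-\Delta+\sigma$ on $B$ with pole at $y$; it is radial in $\rho=|z-y|$ and vanishes on $\partial B$. Apply Green's second identity to $w$ and $\mathcal G(y,\cdot)$ on $B\setminus \mathbb B_\epsilon^n(y)$ and let $\epsilon\to0$. The singular boundary term yields $w(y)$, as in the classical derivation, and the Dirichlet condition on $\partial B$ kills the term containing $\partial_\nu w$.

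Next, we identify the Poisson kernel at the center. Since $\mathcal G(y,\cdot)$ is radial, $-\partial_\nu\mathcal G$ is constant on $\partial B$, say equal to $\kappa/|\partial B|$. To compute $\kappa$, test the representation with a radial solution of $-\Delta w+\sigma w=0$ that is regular at $y$. The natural choice is
\begin{equation*}
w(z)=\rho^{-\nu}I_\nu(\sqrt\sigma\,\rho),\qquad \rho=|z-y|,\quad \nu=\tfrac{n-2}{2}.
\end{equation*}
This is the standard regular solution of the radial modified Helmholtz equation $w''+\frac{n-1}{\rho}w'-\sigma w=0$. The series expansion of $I_\nu$ gives
\begin{equation*}
w(y)=\lim_{\rho\to0}\rho^{-\nu}I_\nu(\sqrt\sigma\rho)=\frac{(\sqrt\sigma/2)^\nu}{\Gamma(\nu+1)},
\end{equation*}
while $w$ is constant on $\partial B$ with value $r^{-\nu}I_\nu(\sqrt\sigma r)$. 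Since the source term vanishes, the representation reduces to $w(y)=\kappa\, r^{-\nu}I_\nu(\sqrt\sigma r)$. Solving for $\kappa$ gives exactly $\kappa=c_{\sigma,r}$ as in \eqref{eq:def_c_sigma_r}. Substituting back yields \eqref{eq:local_ball_representation}.

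The main obstacle is justifying the regularity needed for Green's identity up to $\partial B$. The function $u$ is only as smooth as $u_\Omega$ and $\operatorname{cp}$, and $f+m$ must be integrable against $\mathcal G$. We assume, as the paper does, that $u_\Omega$ is smooth in the interior and that $\operatorname{cp}$ is smooth on $U_\delta$. Since $\overline B$ can be taken compactly inside $U_\delta^\Omega\setminus\Gamma^{\mathrm{ext}}$ (or obtained as a limit of radii $r'<r(y)$), $u\in C^2(\overline B)$ and $f+m$ is bounded there. In the limiting case, we pass $r'\to r(y)$ using continuity of $u$ and dominated convergence, since $\mathcal G$ is integrable, behaving like $|z-y|^{2-n}$ (or like a logarithm when $n=2$).
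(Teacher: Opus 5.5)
Your proposal is correct and follows essentially the same route as the paper. Both use Green's representation for $-\Delta+\sigma$ on the ball, the constancy of the boundary kernel at the center by radial symmetry, and the identification of $c_{\sigma,r}$ by testing with the regular radial solution $\rho^{-\nu}I_\nu(\sqrt{\sigma}\,\rho)$; the paper normalizes this solution to equal $1$ on $\partial\mathbb{B}_r$, which is the same computation, and your derivation via Green's second identity and the limiting argument $r'\to r(y)$ only make explicit what the paper takes for granted.
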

\begin{proof}
Let $\mathbb{B}_r:=\mathbb{B}_r^{n}(y)$ for simplicity in this proof. By the Green representation formula for the operator $-\Delta+\sigma$ in $\mathbb{B}_r$, we have
\[
u(y)
=
\int_{\partial \mathbb{B}_r} P_r(y,z)\,u(z)\,\d S(z)
+
\int_{\mathbb{B}_r} \mathcal{G}(y,z)\,\bigl(f(z)+m(z)\bigr)\,\d z.
\]
Since the observation point is the center of the ball and both the ball and the operator $-\Delta+\sigma$ are rotationally invariant, the boundary kernel is constant on $\partial \mathbb{B}_r$. Thus there exists a constant $p_{\sigma,r}$ such that $P_r(y,z)=p_{\sigma,r}$ for $z\in \partial \mathbb{B}_r$. Hence
\[
\int_{\partial \mathbb{B}_r} P_r(y,z)\,u(z)\,\d S(z)
=
p_{\sigma,r}\int_{\partial \mathbb{B}_r}u(z)\,\d S(z)
=
c_{\sigma,r}\frac{1}{|\partial \mathbb{B}_r|}
\int_{\partial \mathbb{B}_r}u(z)\,\d S(z),
\]
where $c_{\sigma,r}:=p_{\sigma,r}\,|\partial \mathbb{B}_r|$. This gives \eqref{eq:local_ball_representation}.

It remains to determine $c_{\sigma,r}$. Let $v$ solve $-\Delta v+\sigma v=0$ in $\mathbb B_r$ with $v=1$ on $\partial\mathbb B_r$.
% Applying Lemma \ref{lem:local_ball_representation} to $v$, and using $v=1$ on $\partial \mathbb{B}_r$, we obtain $v(y)=c_{\sigma,r}$. 
Applying the Green representation formula \eqref{eq:local_ball_representation} to $v$, and using $v=1$ on $\partial \mathbb{B}_r$, we obtain $v(y)=c_{\sigma,r}$.
By radial symmetry, $v(x)=V(\rho)$ with $\rho=|x-y|$, where
\[
V''(\rho)+\frac{n-1}{\rho}V'(\rho)-\sigma V(\rho)=0,
\quad 0<\rho<r,
\]
subject to $V(r)=1$ and $V$ bounded as $\rho\to 0$. Setting $\nu=\frac{n-2}{2}$, the bounded radial solution has the form $V(\rho)=C\,\rho^{-\nu}I_\nu(\sqrt{\sigma}\,\rho).$
From the boundary condition $V(r)=1$, we obtain
$C=\frac{r^\nu}{I_\nu(\sqrt{\sigma}\,r)}$, so that
\[
V(\rho)=\frac{r^\nu}{I_\nu(\sqrt{\sigma}\,r)}\,\rho^{-\nu}I_\nu(\sqrt{\sigma}\,\rho).
\]
Using $I_\nu(t)\sim \frac{1}{\Gamma(\nu+1)}\left(\frac{t}{2}\right)^\nu$ as $t\to 0$, and noting that $y$ is the center of the ball, we conclude that
\[
c_{\sigma,r}=v(y)=V(0)=\frac{(\sqrt{\sigma}\,r)^\nu}{2^\nu\Gamma(\nu+1)\,I_\nu(\sqrt{\sigma}\,r)}.
\]
This ends the proof.
\end{proof}

\begin{remark}
\itshape
Let $\rho:=|y-z|$ and $\lambda:=\sqrt{\sigma}$. The Green function in \eqref{eq:local_ball_representation} is the Dirichlet Green function associated with the operator $-\Delta+\sigma$ in the ball $\mathbb{B}_{r(y)}^{n}(y)$. Since the pole is located at the center $y$, this Green function is radial and admits the following explicit representation in terms of modified Bessel functions:
\[
\mathcal{G}(y,z)
=
\frac{1}{(2\pi)^{n/2}}
\Big(\frac{\lambda}{\rho}\Big)^{\frac{n-2}{2}}
\bigg[
K_{\frac{n-2}{2}}(\lambda\rho)
-
\frac{K_{\frac{n-2}{2}}(\lambda r(y))}{I_{\frac{n-2}{2}}(\lambda r(y))}
I_{\frac{n-2}{2}}(\lambda\rho)
\bigg].
\]
In particular, for the cases $n=2$ and $n=3$, the corresponding formulas can be found in {\rm\cite[Appendix~A]{Sawhney2023}}:
\[
\begin{aligned}
\mathcal{G}(y,z)
&=
\frac{1}{2\pi}
\bigg[K_0(\lambda\rho)-\frac{K_0(\lambda r(y))}{I_0(\lambda r(y))}I_0(\lambda\rho)\bigg],
&&\text{when } n=2,\\[6pt]
\mathcal{G}(y,z)
&=
\frac{1}{4\pi\rho}
\frac{\sinh\bigl(\lambda(r(y)-\rho)\bigr)}{\sinh(\lambda r(y))},
&&\text{when } n=3.
\end{aligned}
\]
Here $I_\nu$ and $K_\nu$ denote the modified Bessel functions of the first and second kind, respectively.
\end{remark}

To derive a recursive conditional expectation representation, we introduce
\begin{equation}\label{eq:omega_of_y}
\rho(y,z):=\frac{\mathcal{G}(y,z)}{\omega(y)},\quad\omega(y):=\int_{\mathbb{B}_{r(y)}^{n}(y)} \mathcal{G}(y,z)\,\d z,
\quad y\in\Omega,\;\;z\in \mathbb{B}_{r(y)}^{n}(y).
\end{equation}
%and define
%\begin{equation}\label{eq:rho_density}
%\rho(y,z):=\frac{\mathcal{G}(y,z)}{\omega(y)},
%\qquad 
%\end{equation}
Then $\rho(y,\cdot)$ is a probability density on $\mathbb{B}_{r(y)}^{n}(y)$.
Starting from $Y_0=y_0\in\Omega$, we construct the projected chain $\{Y_k\}_{k\ge0}$ recursively as follows. Conditionally on $Y_k$, let $\Theta_{k+1}$ be uniformly distributed on $\partial\mathbb B_{r(Y_k)}^n(Y_k)$, while $Z_{k+1}$ has density $\rho(Y_k,\cdot)$ in $\mathbb B_{r(Y_k)}^n(Y_k)$. The random variables $\Theta_{k+1}$ and $Z_{k+1}$ are assumed to be conditionally independent given $Y_k$. The next projected state is then defined by
%\begin{equation}\label{eq:projected_chain_definition}
$Y_{k+1}=\operatorname{cp}(\Theta_{k+1}).$ 
 With the above notation, we can derive the final representation formulas for both the manifold-with-boundary case and the closed manifold case.

\begin{theorem}\label{thm:final_representation}Let $y_0\in\Omega$ be an interior point, and let $\{Y_k\}_{k\ge0}$ be the projected chain defined by $Y_{k+1}=\operatorname{cp}(\Theta_{k+1})$, where the closest-point map $\operatorname{cp}(\cdot)$ defined in \eqref{eq:cp_map}.
%Assume that \eqref{eq:n_step_representation} holds.

\medskip
\noindent\textnormal{(i)} Manifold with boundary case, i.e., $\Gamma\neq\emptyset$:
Assume that the projected chain reaches the boundary at an almost surely finite index $K$, namely, $Y_K\in\Gamma.$ Then
\begin{equation}\label{eq:boundary_representation}
u_\Omega(y_0)
=
\mathbb E\bigl[W_K\,g_\Omega(Y_K)\bigr]
+
\mathbb E\bigg[\,\sum_{k=0}^{K-1}
W_k\,\omega(Y_k)\bigl(f(Z_{k+1})+m(Z_{k+1})\bigr)
\bigg].
\end{equation}
% where $$W_0:=1,\quad W_k:=\prod_{j=0}^{k-1} c_{\sigma,r(Y_j)},\quad k\ge1,\quad \omega(y)=\frac{1-c_{\sigma,r(y)}}{\sigma}$$
% and $c_{\sigma,r}$ is given in \eqref{eq:def_c_sigma_r}.

\medskip
\noindent\textnormal{(ii)} Closed manifold case, i.e., $\Gamma=\emptyset$:
Assume  that $\lim_{n\to\infty}\mathbb E\bigl[W_n\,u_\Omega(Y_n)\bigr]=0$, then
\begin{equation}\label{eq:boundaryless_representation}
u_\Omega(y_0)
=
\mathbb E\bigg[\,\sum_{k=0}^{\infty}W_k\,\omega(Y_k)\bigl(f(Z_{k+1})+m(Z_{k+1})\bigr)\bigg].
\end{equation}
In the above, the constants is given by
\begin{equation}\label{eq:omega_explicit_formula}
W_0:=1,
\quad
W_k:=\prod_{j=0}^{k-1}c_{\sigma,r(Y_j)},
\quad k\ge1,\quad \omega(y)=\frac{1-c_{\sigma,r(y)}}{\sigma},
\end{equation}
where $c_{\sigma,r}$ is given in \eqref{eq:def_c_sigma_r}.
\end{theorem}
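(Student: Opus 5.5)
The plan is to derive a one-step conditional identity from Lemma~\ref{lem:local_ball_representation}, iterate it into an $n$-step representation, and then pass to the limit. The only new ingredient in the one-step identity is the weight $\omega$.

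\textbf{Identifying $\omega$.} Apply Lemma~\ref{lem:local_ball_representation} to the constant function $v\equiv 1/\sigma$, which solves $-\Delta v+\sigma v=1$. This gives
\[
\tfrac1\sigma = c_{\sigma,r}\tfrac1\sigma+\int_{\mathbb B_r}\mathcal G(y,z)\,\d z,
\]
so $\omega(y)=(1-c_{\sigma,r(y)})/\sigma$, which is \eqref{eq:omega_explicit_formula}.

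\textbf{One-step identity.} Rewrite \eqref{eq:local_ball_representation} probabilistically. Since $\Theta_{k+1}$ is uniform on the sphere and $Z_{k+1}$ has density $\rho(Y_k,\cdot)$, for any interior state $Y_k\in\Omega$ satisfying \eqref{eq:admissible_ball} we have
\[
u(Y_k)=c_{\sigma,r(Y_k)}\,\mathbb E\bigl[u(\Theta_{k+1})\mid Y_k\bigr]+\omega(Y_k)\,\mathbb E\bigl[(f+m)(Z_{k+1})\mid Y_k\bigr].
\]
Because $u=Eu_\Omega$ is constant along normal fibers, $u(\Theta_{k+1})=u_\Omega(\operatorname{cp}(\Theta_{k+1}))=u_\Omega(Y_{k+1})$. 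Also $u(Y_k)=u_\Omega(Y_k)$ since $Y_k\in\mathcal M$. The one-step identity therefore reads
\[
u_\Omega(Y_k)=\mathbb E\bigl[c_{\sigma,r(Y_k)}u_\Omega(Y_{k+1})+\omega(Y_k)(f+m)(Z_{k+1})\mid Y_k\bigr].
\]

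\textbf{Iteration.} Multiply the one-step identity by $W_k$, which is $\sigma(Y_0,\dots,Y_k)$-measurable, and note that $W_kc_{\sigma,r(Y_k)}=W_{k+1}$. Taking expectations via the tower property and telescoping gives the $n$-step formula
\[
u_\Omega(y_0)=\mathbb E\bigl[W_n u_\Omega(Y_n)\bigr]+\mathbb E\Bigl[\sum_{k=0}^{n-1}W_k\,\omega(Y_k)(f+m)(Z_{k+1})\Bigr].
\]

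\textbf{Boundary case.} Here the steps must be stopped at $K$, so apply the iteration on the event $\{k<K\}$, where $Y_k$ is interior; this event is determined by $Y_0,\dots,Y_k$. This yields the stopped version with $n\wedge K$ in place of $n$. Now let $n\to\infty$. Since $0<c_{\sigma,r}\le1$, we have $0\le W_k\le1$, so $|W_{n\wedge K}u_\Omega(Y_{n\wedge K})|\le\|u_\Omega\|_\infty$. Since $K<\infty$ a.s., dominated convergence gives $\mathbb E[W_{n\wedge K}u_\Omega(Y_{n\wedge K})]\to\mathbb E[W_Kg_\Omega(Y_K)]$, using $u_\Omega=g_\Omega$ on $\Gamma$. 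For the sum, I would use monotone convergence separately on the positive and negative parts of $f+m$, or dominated convergence if $\mathbb E\bigl[\sum_{k<K}W_k\omega(Y_k)\bigr]<\infty$.

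\textbf{Closed case.} Use the $n$-step formula directly with the assumption $\mathbb E[W_nu_\Omega(Y_n)]\to0$. The partial sums then converge, which yields \eqref{eq:boundaryless_representation}.

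\textbf{Main obstacle.} I expect the hardest step to be justifying the limit interchange for the source sum. The natural bound is $\sum_k W_k\omega(Y_k)=\sum_k (W_k-W_{k+1})/\sigma\le 1/\sigma$, since the sum telescopes. Together with boundedness of $f+m$ on $U_\delta^\Omega$, this gives a uniform integrable dominant, so dominated convergence applies in both cases. I would try this telescoping bound first. A secondary point to check is that every admissible ball in \eqref{eq:admissible_ball} keeps $\Theta_{k+1}$ inside $U_\delta^\Omega$, so that $\operatorname{cp}$ is defined there and $Y_{k+1}\in\Omega$.
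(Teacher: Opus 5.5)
Your proposal is correct and follows essentially the same route as the paper. Both arguments derive the one-step identity from Lemma~\ref{lem:local_ball_representation}, multiply by $W_k$ and telescope to the $n$-step formula, then pass to $K$ or to $n\to\infty$; your identification of $\omega$ via the Green representation of the constant $1/\sigma$ is the same computation as the paper's $q=(1-v)/\sigma$ uniqueness argument. Your stopped-chain argument with $n\wedge K$ and the telescoping bound $\sum_{k=0}^{n-1}W_k\,\omega(Y_k)=(1-W_n)/\sigma\le 1/\sigma$ also supply the dominated-convergence justification that the paper leaves implicit when it simply ``passes to the stopping index $K$'' and ``lets $N_s\to\infty$''.
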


\begin{proof}
% The expression for $\rho(y,z)$ follows immediately from its definition.
We first establish the finite-step identity from which both representation formulas follow. Using the local representation formula in Lemma~\ref{lem:local_ball_representation} together with the definition of $\rho(y,\cdot)$, we obtain
\begin{equation*}
u(y)
=
c_{\sigma,r(y)}
\frac{1}{|\partial \mathbb{B}_{r(y)}^{n}(y)|}
\int_{\partial \mathbb{B}_{r(y)}^{n}(y)} u(z)\,\d S(z)
+
\int_{\mathbb{B}_{r(y)}^{n}(y)}
\mathcal{G}(y,z)\bigl(f(z)+m(z)\bigr)\,\d z.
\end{equation*}
Since $\Theta_y$ is distributed according to the normalized surface measure on $\partial \mathbb{B}_{r(y)}^{n}(y)$, we have
\begin{equation*}
\mathbb E\bigl[u(\Theta_y)\bigr]
=
\frac{1}{|\partial \mathbb{B}_{r(y)}^{n}(y)|}
\int_{\partial \mathbb{B}_{r(y)}^{n}(y)} u(z)\,\d S(z).
\end{equation*}
Moreover, by the definition of the density $\rho(y,\cdot)$,
\begin{equation*}
\begin{split}
\mathbb E\bigl[f(Z_y)+m(Z_y)\bigr]
&=
\int_{\mathbb{B}_{r(y)}^{n}(y)}
\bigl(f(z)+m(z)\bigr)\rho(y,z)\,\d z =
\frac{1}{\omega(y)}
\int_{\mathbb{B}_{r(y)}^{n}(y)}
\mathcal{G}(y,z)\bigl(f(z)+m(z)\bigr)\,\d z.
\end{split}
\end{equation*}
Combining the above identities yields
\begin{equation*}
u(y)
=
c_{\sigma,r(y)}\,\mathbb E\bigl[u(\Theta_y)\bigr]
+
\omega(y)\,\mathbb E\bigl[f(Z_y)+m(Z_y)\bigr].
\end{equation*}
Since $u$ is the closest-point extension of $u_\Omega$, it follows that $u(\Theta_y)=u_\Omega(\operatorname{cp}(\Theta_y))$. Therefore,
\begin{equation*}
u_\Omega(y)
=
c_{\sigma,r(y)}\,\mathbb E\bigl[u_\Omega(\operatorname{cp}(\Theta_y))\bigr]
+
\omega(y)\,\mathbb E\bigl[f(Z_y)+m(Z_y)\bigr].
\end{equation*}
Substituting $y=Y_k$ into the above identity gives the one-step relation
\begin{equation}\label{eq:one_step_markov}
\begin{split}
u_\Omega(Y_k)
=\;&
c_{\sigma,r(Y_k)}
\,\mathbb E\bigl[u_\Omega(Y_{k+1})\mid Y_k\bigr]
+
\omega(Y_k)
\,\mathbb E\bigl[f(Z_{k+1})+m(Z_{k+1})\mid Y_k\bigr],
\qquad k\ge0.
\end{split}
\end{equation}
Writing $W_0:=1$ and $W_{k+1}:=W_kc_{\sigma,r(Y_k)}$, multiplying \eqref{eq:one_step_markov} by $W_k$ gives
\begin{equation*}
W_k\,u_\Omega(Y_k)
=
\mathbb E\bigl[W_{k+1}u_\Omega(Y_{k+1})\mid Y_k\bigr]
+
W_k\,\omega(Y_k)
\,\mathbb E\bigl[f(Z_{k+1})+m(Z_{k+1})\mid Y_k\bigr].
\end{equation*}
Taking expectations on both sides and summing from $k=0$ to $N_s-1$ gives
\begin{equation}\label{eq:n_step_representation}
u_\Omega(y_0)
=
\mathbb E\bigl[W_{N_s}\,u_\Omega(Y_{N_s})\bigr]
+
\sum_{k=0}^{N_s-1}
\mathbb E\Bigl[
W_k\,\omega(Y_k)\bigl(f(Z_{k+1})+m(Z_{k+1})\bigr)
\Bigr].
\end{equation}

For case \textnormal{(i)}, we apply \eqref{eq:n_step_representation} to the stopped chain. Passing to the stopping index $K$ yields
\begin{equation*}
u_\Omega(y_0)
=
\mathbb E\bigl[W_K\,u_\Omega(Y_K)\bigr]
+
\mathbb E\bigg[
\sum_{k=0}^{K-1}
W_k\,\omega(Y_k)\bigl(f(Z_{k+1})+m(Z_{k+1})\bigr)
\bigg].
\end{equation*}
Since $Y_K\in\Gamma$ and $u_\Omega=g_\Omega$ on $\Gamma$, we have $u_\Omega(Y_K)=g_\Omega(Y_K)$.
Substituting this identity into the preceding formula gives \eqref{eq:boundary_representation}.
For case \textnormal{(ii)}, letting $N_s\to\infty$ in \eqref{eq:n_step_representation} and using the assumption $\mathbb E\bigl[W_{N_s}\,u_\Omega(Y_{N_s})\bigr]\to0$, we obtain \eqref{eq:boundaryless_representation}.

Finally, we turn to the explicit form of $\omega(y)$. Fix $y\in\Omega$, let $q$ solve $-\Delta q+\sigma q=1$ in $\mathbb B_{r(y)}^n(y)$ with homogeneous Dirichlet boundary condition. By Lemma~\ref{lem:local_ball_representation}, it is clear that
$q(y)=\int_{\mathbb B_{r(y)}^n(y)}\mathcal G(y,z)\,{\rm d}z=\omega(y).$ 
Moreover, let $v$ solve $-\Delta v+\sigma v=0$ in $\mathbb B_{r(y)}^n(y)$ with $v=1$ on $\partial\mathbb \mathbb B_{r(y)}^n(y)$. Since $v(y)=c_{\sigma,r}$ and $q=(1-v)/\sigma$ by uniqueness, $\omega(y)=q(y)=\frac{1-c_{\sigma,r}}{\sigma}$. 
This ends the proof.
\end{proof}

\subsection{Efficient algorithm and implementation}

\begin{figure}[ht!]
\centering
\resizebox{0.68\textwidth}{!}{%
\begin{tikzpicture}

% Main figure
\node[anchor=south west,inner sep=0] (main) at (0,0)
{\includegraphics[width=6.5cm]{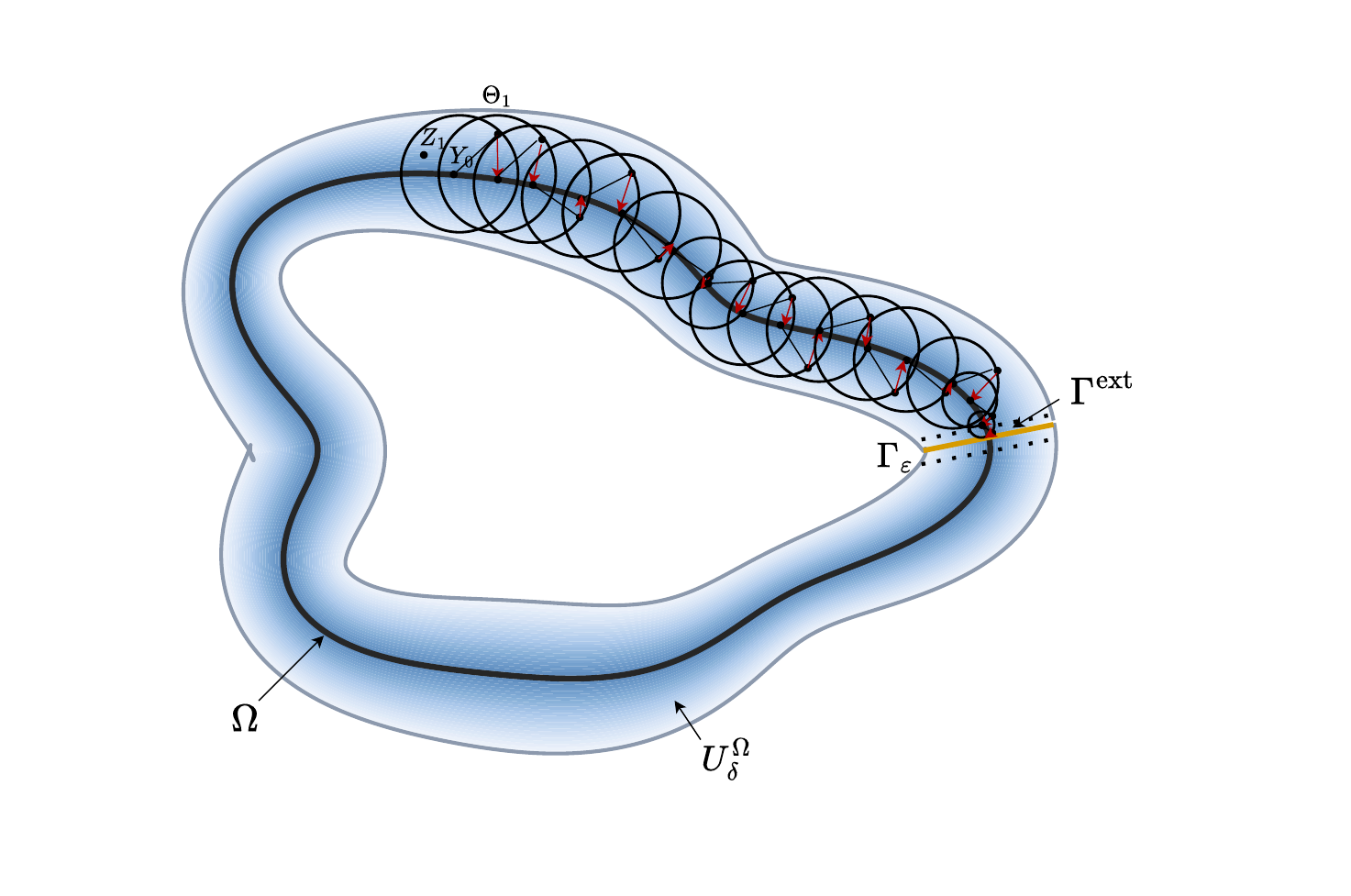}};

% Inset figure: move slightly inward to the left
\node[anchor=south west,inner sep=0] (inset) at (5.55,1.15)
{\includegraphics[width=1.8cm,trim=1.3cm 0.2cm 0cm 0cm,clip]{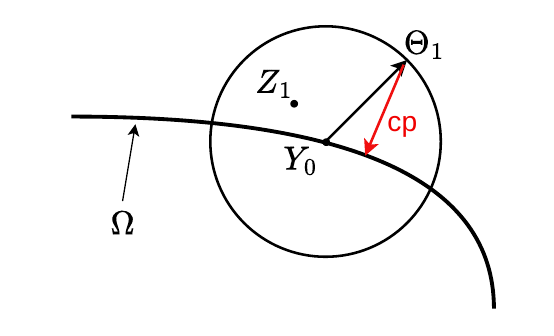}};

% Inset frame
\draw[black,thick] (5.55,1.15) rectangle (7.25,2.5);

% Zoom box on the main figure: adjusted to cover the local ball region
\draw[black,thick] (1.85,2.95) rectangle (2.65,3.95);

% Connecting lines
\draw[black,thin] (2.65, 2.95) -- (5.55, 1.15);
\draw[black,thin] (2.65, 3.95) -- (5.55, 2.5);

\end{tikzpicture}%
}
\caption{Schematic illustration of the MPWoS method on a one-dimensional manifold (curve) $\Omega$. The local Euclidean balls are chosen inside the tubular neighborhood $U_\delta^\Omega$, where the closest-point projection is uniquely defined. Sampled boundary points are projected back to $\Omega$, and the chain is stopped upon entering the boundary layer $\Gamma_\varepsilon$. The inset illustrates the local projection step: starting from a manifold point $Y_0\in\Omega$, a point $\Theta_1$ is sampled in the local Euclidean ball centered at $Y_0$ and then projected back to $\Omega$ by the closest-point map. }
%The local Euclidean balls are chosen inside the tubular neighborhood $U_\delta^\Omega$, where the closest-point projection is uniquely defined; the sampled boundary points are projected back to $\Omega$, and the chain is stopped when it enters the boundary layer $\Gamma_\varepsilon$ near the extended cut boundary $\Gamma^{\mathrm{ext}}$. The inset shows the local projection mechanism: starting from a manifold point $Y_0\in\Omega$, a point $\Theta_1$ is sampled in the local Euclidean ball centered at $Y_0$ and then projected back to $\Omega$ by the closest-point map.}
\label{fig:path}
\end{figure}

The representation formulas in Theorem~\ref{thm:final_representation} naturally lead to a Monte Carlo approximation based on independent sample paths. At each recursion step, we need to compute: (i). the admissible radius $r(Y_k)$; (ii). the projected state $Y_k$ and the interior sample $Z_{k+1}$; (iii). the compensation approximation $m_\ast(Z_{k+1})$. 

 Note that, in the boundary case, the theoretical representation is based on the first hitting step of the boundary. In practice, however, the admissible radius decreases as the chain approaches the boundary, which increases the recursion cost. For a prescribed $\varepsilon>0$, we therefore introduce %the boundary layer
\begin{equation}\label{eq:boundary_layer}
\Gamma_{\varepsilon}:=\big\{y\in\Omega:\delta_\Gamma(y)<\varepsilon\big\},
\end{equation}
where $\delta_\Gamma(y)$ denotes the effective distance to the extended boundary. The recursion is terminated once the chain enters $\Gamma_\varepsilon$, and the terminal value is approximated by the boundary datum at the nearest boundary point. In the closed-manifold case, the recursion is terminated when $W_k<\tau_W$, where $\tau_W>0$ is a prescribed tolerance.
%\begin{list}{(\roman{enumi})}{%
%\usecounter{enumi}
%\setlength{\leftmargin}{3.0em}
%\setlength{\labelwidth}{2.0em}
%\setlength{\labelsep}{0.8em}
%\setlength{\itemsep}{2pt}
%\setlength{\topsep}{4pt}
%}
%\item the admissible radius $r(Y_k)$ (Section~\ref{sec:radius_selection});
%%\item {\color{blue}the weight $\omega(Y_k)$ (Equation \eqref{eq:omega_explicit_formula})};
%\item the projected state $Y_k$ and the interior sample $Z_{k+1}$ (Section~\ref{sec:YZ});
%\item the compensation approximation $m_h(Z_{k+1})$ (Section~\ref{sec:M}).
%\end{list}

%In the boundary case, the theoretical representation is based on the first hitting step of the boundary. In practical computation, however, the admissible radius becomes increasingly small as the chain approaches the boundary, which substantially increases the recursion cost. Therefore, for a prescribed $\varepsilon>0$, we introduce the boundary layer
%\begin{equation}\label{eq:boundary_layer}
%\Gamma_{\varepsilon}:=\big\{y\in\Omega:\delta_\Gamma(y)<\varepsilon\big\},
%\end{equation}
%where $\delta_\Gamma(y)$ denotes the effective distance to the extended boundary. The projected chain is terminated once it enters $\Gamma_\varepsilon$, and the terminal value is approximated by the boundary datum at a nearest boundary point. %The parameter $\varepsilon$ controls both the terminal approximation error and the recursion cost near the boundary. 
%In the closed-manifold case, the recursion is terminated once $W_k<\tau_W$, where $\tau_W>0$ is a prescribed tolerance. 

%We next describe the quantities in (i)–(iii) and their practical construction.

\subsubsection{Adaptive choice of the radius}\label{sec:radius_selection}

The first quantity is the local radius $r(Y_k)$, which determines the Euclidean ball in the local representation formula and thus affects both efficiency and accuracy. The radius should be small enough to keep the ball inside the tubular neighborhood and, in the boundary case, away from the extended boundary, yet large enough to avoid excessive recursion. These requirements are closely related to the reach of the manifold, which reflects both local curvature and bottleneck geometry.

In principle, the radius may also be defined through quantities associated with the medial axis or the local feature size (see, e.g., \cite{AmentaBernEppstein1998,DeyGiesen2003,Giesen2006,Sugimoto2024}). However, such constructions are often difficult to evaluate efficiently and robustly in high-dimensional or complicated geometries. We therefore adopt a radius rule based on local geometric and boundary information. For this purpose, we define the local radius by
\begin{equation}\label{eq:radius_rule_unified}
r(y):=\theta\,\min\bigl\{\delta_{\Gamma}(y),\;\rho_{\mathrm{geo}}(y)\bigr\},\quad 0<\theta<1,
\end{equation}
where $\theta$ is a safety factor, $\delta_{\Gamma}(y)$ denotes an effective distance to the extended boundary, and $\rho_{\mathrm{geo}}(y)$ is an additional geometric scale used to control the local sampling radius. Moreover, the radius is required to satisfy $\mathbb{B}_{r(y)}^{n}(y)\subset U_\delta^\Omega$. We next specify the construction of $\delta_{\Gamma}(y)$ and $\rho_{\mathrm{geo}}(y)$.

{\bf (1) Computation of the effective distance $\delta_{\Gamma}(y)$.}
In the closed manifold case, we simply set $\delta_{\Gamma}(y):=+\infty$. Assume now that $\Omega$ has boundary $\Gamma$, and let $y_{\Gamma}:=\operatorname{cp}_{\Gamma}(y)\in\Gamma$ be a nearest boundary point to $y$. Denote by
\[
P_T(y_{\Gamma}):\mathbb R^n\to T_{y_{\Gamma}}\mathcal M,
\quad
P_N(y_{\Gamma}):\mathbb R^n\to N_{y_{\Gamma}}\mathcal M
\]
the orthogonal projections onto the tangent and normal spaces of $\mathcal M$ at $y_{\Gamma}$, respectively. We further define
\[
r_{\Gamma}:=y_{\Gamma}-y,
\quad
r_{\Gamma}^T:=P_T(y_{\Gamma})\,r_{\Gamma},
\quad
r_{\Gamma}^N:=P_N(y_{\Gamma})\,r_{\Gamma}.
\]
To construct a practical radius rule, we approximate the local geometry of the extended boundary near $y_{\Gamma}$ by the normal disk
\begin{equation}\label{eq:extended_boundary_disk}
E_{\Gamma}(y_{\Gamma})
:=
\bigl\{
y_{\Gamma}+\xi:\ \xi\in N_{y_{\Gamma}}\mathcal M,\ \|\xi\|_2\le \ell(y_{\Gamma})
\bigr\},
\end{equation}
where $\ell(y_{\Gamma})>0$ is a prescribed extension length, taken sufficiently small so that the associated local extension stays inside the tubular neighborhood where the closest-point projection is uniquely defined. We then define the effective distance to the extended boundary by
\begin{equation}\label{eq:delta_Gamma_general}
\delta_{\Gamma}(y)
:=
\operatorname{dist}\bigl(y,E_{\Gamma}(y_{\Gamma})\bigr)
=
\left(
\|r_{\Gamma}^T\|_2^2+\bigl(\|r_{\Gamma}^N\|_2-\ell(y_{\Gamma})\bigr)_+^2
\right)^{1/2},
\end{equation}
where $(a)_+:=\max\{a,0\}$. Thus, $\delta_{\Gamma}(y)$ is the distance from $y$ to the normal-disk approximation of the extended boundary at $y_{\Gamma}$. In the practical implementation, we take the extension length to be comparable to the local geometric scale and, for simplicity, choose $\ell(y_{\Gamma})=\rho_{\mathrm{geo}}(y_{\Gamma})$.

\begin{remark}
\itshape
The above construction is consistent with the 1D and 2D cases in {\rm\cite{Sugimoto2024}}. In $\dim(\mathcal M)=2$ and codimension-one case, the normal disk \eqref{eq:extended_boundary_disk} reduces to a line segment.  
%For point-cloud manifolds, the quantities
%$
%y_\Gamma$, $P_T(y_\Gamma)$, and $P_N(y_\Gamma)
%$
%in \eqref{eq:delta_Gamma_general} are approximated through local reconstruction, for instance by a local tangent-plane fit or a local polynomial patch together with a reconstructed boundary set. The effective distance $\delta_\Gamma(y)$ is then evaluated from the reconstructed tangent-normal decomposition through the same formula \eqref{eq:delta_Gamma_general}.
%Let $n_{\Gamma}$ be a unit normal vector at $y_{\Gamma}$. Then \eqref{eq:delta_Gamma_general} becomes
%\begin{equation}\label{eq:delta_Gamma_codim1}
%\delta_{\Gamma}(y)
%=
%\bigl\|r_{\Gamma}-\operatorname{clamp}\bigl(r_{\Gamma}\!\cdot n_{\Gamma},-\ell(y_{\Gamma}),\ell(y_{\Gamma})\bigr)\,n_{\Gamma}\bigr\|_2,
%\end{equation}
%where
%$\operatorname{clamp}(a,b,c):=\min\{\max\{a,b\},c\}.$
%When $\dim(\mathcal M)=1$, let
%$q_{\Gamma}:=r_{\Gamma}-(r_{\Gamma}\!\cdot t_{\Gamma})\,t_{\Gamma},$
%where $t_{\Gamma}$ is a unit tangent vector at $y_{\Gamma}$. Then
%\begin{equation}\label{eq:delta_Gamma_curve}
%\delta_{\Gamma}(y)=\begin{cases}
%|r_{\Gamma}\!\cdot t_{\Gamma}|, & \|q_{\Gamma}\|_2<\ell(y_{\Gamma}),\\
%\bigl\|r_{\Gamma}-\ell(y_{\Gamma})\,q_{\Gamma}/\|q_{\Gamma}\|_2\bigr\|_2,
%& \text{otherwise}.
%\end{cases}
%\end{equation}
\end{remark}

{\bf (2) Computation of $\rho_{\mathrm{geo}}(y)$.}
The quantity $\rho_{\mathrm{geo}}(y)$ reflects the local geometric scale near $y$ and reduces the radius in regions with complicated geometry, even when the point is away from the boundary. Thus, $\rho_{\mathrm{geo}}(y)$ complements the boundary term $\delta_{\Gamma}(y)$ in the radius rule.

For manifolds with explicitly available differential information, we take
\begin{equation}\label{eq:rho_geo_curvature}
\rho_{\mathrm{geo}}(y):=\frac{c_{\kappa}}{1+\kappa(y)},
\end{equation}
where $\kappa(y)$ is a curvature indicator and $c_{\kappa}>0$ is a fixed constant. This choice decreases the local radius in highly curved regions, while allowing a relatively large radius in regions of mild geometry.

For manifolds represented only through discrete samples, where stable curvature information is generally unavailable, we instead set
\begin{equation}\label{eq:rho_geo_spacing}
\rho_{\mathrm{geo}}(y):=c_h\,h(y),
\end{equation}
where $h(y)$ denotes a local neighborhood size and $c_h>0$ is a fixed constant. A typical choice of $h(y)$ is the average distance from $y$ to a prescribed number of nearest neighbors.

\subsubsection{Sampling variables and closest-point projection}\label{sec:YZ}
Once the local radius $r(Y_k)$ has been determined, the corresponding Euclidean ball $\mathbb{B}_{r(Y_k)}^{n}(Y_k)\subset\mathbb R^n$ is fixed.
The projected recursion then proceeds by sampling a boundary point
\begin{equation}\label{eq:projected_chain_update}
Y_{k+1}
=
\operatorname{cp}(\Theta_{k+1}),\quad \Theta_{k+1}
=
Y_k+r(Y_k)\zeta_{k+1},
\end{equation}
where $\zeta_{k+1}$ is uniformly distributed on the unit sphere $\mathbb S^{n-1}$ and generated by normalizing a standard Gaussian vector in $\mathbb R^n$,
%
% {\color{blue} The projected recursion then proceeds by sampling a point on the boundary of this ball. More precisely, we write
%\begin{equation*}
%\Theta_{k+1}
%=
%Y_k+r(Y_k)\zeta_{k+1},
%\qquad
%Y_{k+1}
%=
%\operatorname{cp}(\Theta_{k+1}),
%\end{equation*}
%where the uniform direction $\zeta_{k+1}$ on the sphere is generated by normalizing a standard Gaussian vector in $\mathbb R^n$.} 
%In practice, such a direction is generated by normalizing a standard Gaussian vector in $\mathbb R^n$. 
and $\operatorname{cp}$ is defined in \eqref{eq:cp_map}. For parameterized manifolds, the projection is available explicitly, while for implicit and point-cloud manifolds it is computed through a local Newton solve and through nearest-neighbor search with local reconstruction, respectively. Thus, the projected chain evolves on $\mathcal M$, while sampling is performed in the ambient ball.

We next consider the interior sample $Z_{k+1}$. 
% Given the normalized Green density $\rho$ in \eqref{eq:omega_explicit_formula}, the interior point is sampled according to
% \begin{equation}\label{eq:Z_sampling_density}
% Z_{k+1}\sim \rho(Y_k,\cdot)
% \quad \text{in } \;\mathbb{B}_{r(Y_k)}^{n}(Y_k).
% \end{equation}
 Using \eqref{eq:omega_explicit_formula}, the normalized Green density is given by $\rho(y,z)=\mathcal G(y,z)/\omega(y)=\sigma\mathcal G(y,z)/(1-c_{\sigma,r(y)})$ for $z\in\mathbb B_{r(y)}^n(y)$. The interior point is sampled according to
\begin{equation}\label{eq:Z_sampling_density}
Z_{k+1}\sim \rho(Y_k,\cdot)
\quad \text{in } \mathbb{B}_{r(Y_k)}^{n}(Y_k).
\end{equation}
This sample contributes to the local volume term associated with the source and compensation terms in the Green representation.

\subsubsection{Approximation of the compensation term}\label{sec:M}
%It remains to specify how the compensation term is approximated in practice. 
Although $m$ is defined in \eqref{eq:mismatch_definition}, it is generally unavailable in closed form since it depends on the ambient Laplacian of the closest-point extension of the unknown solution $u_\Omega$. Thus, evaluating $m$ exactly requires a priori information on the solution and its derivatives. Except for problems with known exact solutions, we replace $m$ by a computable approximation $m_\ast$ constructed from a coarse numerical predictor.

% For clarity of presentation, we describe the construction in the codimension-one setting, that is, for a hypersurface $\mathcal M\subset\mathbb R^n$ with $\dim\mathcal M=n-1$. In this case, after choosing a local unit normal field $n$ on $\mathcal M$, each point $x\in U_\delta^\Omega$ can be written as
 For clarity of presentation, we describe the construction for a codimension-one manifold $\mathcal M\subset\mathbb R^n$. Let $n$ be a local unit normal field on $\mathcal M$, and for $x\in U_\delta^\Omega$, write 
\begin{equation}\label{eq:normal_coordinate_codim_one}
x
=
y+\eta n(y),
\quad
y=\operatorname{cp}(x)\in\Omega\subset\mathcal M,
\end{equation}
where $\eta$ is the signed normal distance from $x$ to $\mathcal M$. 
% Since $m$ vanishes on $\Omega$, we approximate its values in $U_\delta^\Omega$ by a Taylor expansion along the normal direction. 
% {Since m vanishes on $\Omega$, its off-surface values can be locally
% approximated by a Taylor expansion in the normal direction.}
% Thanks to \eqref{eq:normal_coordinate_codim_one}, we write
 Since $m$ vanishes on $\Omega$, we use the normal Taylor expansion 
\begin{equation*}\label{eq:m_taylor_normal}
\begin{aligned}
m(x)=m(y+\eta n(y))
=
m(y)
+
\eta\,m_1(y)
+
\frac12\eta^2\,m_2(y)
+
O(|\eta|^3)
=
\eta\,m_1(y)
+
\frac12\eta^2\,m_2(y)
+
O(|\eta|^3),
\end{aligned}
\end{equation*}
where
\begin{equation*}\label{eq:m1_m2_definition}
m_1(y)
=
\frac{\partial}{\partial \eta}
m(y+\eta n(y))
\Big|_{\eta=0},
\quad
m_2(y)
=
\frac{\partial^2}{\partial \eta^2}
m(y+\eta n(y))
\Big|_{\eta=0}.
\end{equation*}

% In computation, the exact coefficients $m_1$ and $m_2$ are replaced by numerical coefficients computed from an auxiliary approximation $u_\ast$. Let
% \begin{equation}\label{eq:auxiliary_residual_normal}
% R_\ast(s;y)
% :=
% -\Delta(Eu_\ast)(y+s n(y))
% +
% \sigma u_\ast(y)
% -
% f_\Omega(y),
% \quad |s|\ll1.
% \end{equation}
% Here we have used the fact that $\operatorname{cp}(y+s n(y))=y,$ so that
% \[
% Eu_\ast(y+s n(y))=u_\ast(y),
% \quad
% Ef_\Omega(y+s n(y))=f_\Omega(y).
% \]
% If $u_\ast$ were replaced by the exact solution $u_\Omega$, then $R_\ast(s;y)$ would coincide with $m(y+s n(y))$.
In computation, the coefficients in the normal expansion are approximated from an auxiliary predictor $u_\ast$. For this purpose, we define
%\begin{equation}\label{eq:auxiliary_residual_normal}
$R_\ast(s;y)=-\Delta(Eu_\ast)(y+s n(y))+\sigma u_\ast(y)-f_\Omega(y),$ $|s|\ll1.$
%\end{equation}
Since $\operatorname{cp}(y+s n(y))=y$, we have $Eu_\ast(y+s n(y))=u_\ast(y)$ and $Ef_\Omega(y+s n(y))=f_\Omega(y)$.
If $u_\ast=u_\Omega$, then $R_\ast(s;y)$ coincides with the compensation term along the normal direction. 
The compensation term is approximated from a coarse predictor $u_\ast$ through a second-order local normal expansion
\begin{equation*}
m_\ast(x)
=
\eta(x)\,m_{1,\ast}(\operatorname{cp}(x))
+
\frac12\eta(x)^2\,m_{2,\ast}(\operatorname{cp}(x)),
\quad x\in U_\delta^\Omega.
\end{equation*}
where\begin{equation*}
m_{1,\ast}(y)
=
\frac{
R_\ast(h_\eta;y)-R_\ast(-h_\eta;y)
}{
2h_\eta
},
\quad
m_{2,\ast}(y)
=
\frac{
R_\ast(h_\eta;y)-2R_\ast(0;y)+R_\ast(-h_\eta;y)
}{
h_\eta^2
}.
\end{equation*}
 %The resulting approximation, denoted by $m_\ast$, is used in place of the unavailable quantity $m$ in the path simulation. 
 Consequently, the source term in \eqref{eq:boundary_representation} is replaced by $f(Z_{k+1})+m_\ast(Z_{k+1}).$
%In computation, {\color{blue}$m_1$ and $m_2$ are approximated from $u_\ast$. Define}
%\begin{equation}\label{eq:auxiliary_residual_normal}
%R_\ast(s;y)
%:=
%-\Delta(Eu_\ast)(y+s n(y))
%+
%\sigma u_\ast(y)
%-
%f_\Omega(y),
%\qquad |s|\ll1.
%\end{equation}
%{\color{blue}Since $\operatorname{cp}(y+s n(y))=y$, we have $Eu_\ast(y+s n(y))=u_\ast(y)$ and $Ef_\Omega(y+s n(y))=f_\Omega(y)$. If $u_\ast=u_\Omega$, then $R_\ast(s;y)$ coincides with $m(y+s n(y))$.}
%{\color{blue}Let $h_\eta>0$ be a small normal finite-difference step. We approximate the normal derivatives in the Taylor expansion by
%\begin{equation*}
%m_{1,\ast}(y)
%=
%\frac{
%R_\ast(h_\eta;y)-R_\ast(-h_\eta;y)
%}{
%2h_\eta
%},
%\qquad
%m_{2,\ast}(y)
%=
%\frac{
%R_\ast(h_\eta;y)-2R_\ast(0;y)+R_\ast(-h_\eta;y)
%}{
%h_\eta^2
%}.
%\end{equation*}
%The resulting second-order approximation of the compensation term is given by
%\begin{equation*}
%m_\ast(x)
%=
%\eta(x)\,m_{1,\ast}(\operatorname{cp}(x))
%+
%\frac12\eta(x)^2\,m_{2,\ast}(\operatorname{cp}(x)),
%\qquad x\in U_\delta^\Omega.
%\end{equation*}
%Thus, in the path simulation, the unavailable value $m(Z_{k+1})$ is replaced by $m_\ast(Z_{k+1})$, and the source term in the local Green representation is replaced by
%$f(Z_{k+1})+m_\ast(Z_{k+1})$. }
A simpler baseline variant is obtained by setting $m_\ast\equiv0$, in which case no auxiliary predictor or derivative reconstruction is required.
 The above construction is presented for the codimension-one case. For codimension $q>1$, the scalar normal distance is replaced by local normal coordinates, and the compensation approximation follows from the corresponding multivariate Taylor expansion.

\subsubsection{The Monte Carlo method}

The representation formulas in Theorem \ref{thm:final_representation} lead directly to a Monte Carlo approximation. The expectations are estimated by independent simulated paths of the projected chain $\{Y_k, Z_k\}_{k\ge0}$. We denote the number of independent paths by $N$, and use the superscript $i$ to indicate the quantities generated in the $i$-th path. At each step of one path, we may use more than one interior sample in order to reduce the variance of the local volume contribution. More precisely, for the $i$-th path and the $k$-th time step, we define
\begin{equation}\label{eq:local_average_source}
F_k^i
:=
\frac{1}{N_V}
\sum_{\ell=1}^{N_V}
\left[
f(Z_{k+1}^{i,\ell})
+
m_\ast(Z_{k+1}^{i,\ell})
\right],
\quad
Z_{k+1}^{i,\ell}
\overset{\mathrm{i.i.d.}}{\sim}
\rho(Y_k^i,\cdot)
\ \text{in}\
B_{r(Y_k^i)}^n(Y_k^i).
\end{equation}
%More precisely, for the $i$-th path and the $k$-th step, we draw
%\[
%Z_{k+1}^{i,\ell}
%\sim
%\rho(Y_k^i,\cdot),
%\quad
%\ell=1,\dots,N_V,
%\]
%independently in the ball $B_{r(Y_k^i)}^n(Y_k^i)$, and define the local averaged source contribution by
%\begin{equation}\label{eq:local_average_source}
%F_k^i
%:=
%\frac{1}{N_V}
%\sum_{\ell=1}^{N_V}
%\left[
%f(Z_{k+1}^{i,\ell})
%+
%m_\ast(Z_{k+1}^{i,\ell})
%\right].
%\end{equation}
%When $N_V=1$, this reduces to the single-sample form appearing in Theorem \ref{thm:final_representation}.

%Then we can construct a Monte Carlo procedure based on the random sample associated with each path. 
Then, the contribution of the $i$-th path is defined by
\begin{equation}\label{eq:path_score_definition}
S^i(y_0)
:=
\begin{cases}
\displaystyle
W_{K^i}^i
g_\Omega\!\left(\operatorname{cp}_\Gamma(Y_{K^i}^i)\right)
+
\sum_{k=0}^{K^i-1}
W_k^i\,\omega(Y_k^i)\,F_k^i,
& \text{if } \Omega \text{ has boundary } \Gamma,
\\[6pt]
\displaystyle
\sum_{k=0}^{K^i-1}
W_k^i\,\omega(Y_k^i)\,F_k^i,
& \text{if } \Omega \text{ is closed},
\end{cases}
\end{equation}
where the stopping step $K^i$ is defined by
\[
K^i=
\begin{cases}
\inf\{k\ge 0:\,Y_k^i\in\Gamma_\varepsilon\}, & \text{if $\Omega$ has boundary},\\[4pt]
\inf\{k\ge 0:\,W_k^i<\tau_W\}, & \text{if $\Omega$ is closed}.
\end{cases}
\]

%Let $y_0\in\Omega$ be an interior point, and let $\{Y_k^i\}_{k\ge0}$ and $\{W_k^i\}_{k\ge0}$, $i=1,\dots,N$, be $N$ independent realizations of the projected chain and the corresponding weights, respectively. Then
% \begin{equation}\label{eq:mc_estimator}
% u_\Omega(y_0)
% \approx
% \lim_{N \to \infty} \frac{1}{N} \sum_{i=1}^N S^i(y_0)=
% \begin{cases}
%     \mathbb{E}\Big[\;W_{K^i}^i
%     g_\Omega\!\left(\operatorname{cp}_\Gamma(Y_{K^i}^i)\right)
%     +
%     \sum\limits_{k=0}^{K^i-1}
%     W_k^i\,\omega(Y_k^i)\,F_k^i\;\Big],
%     & \text{if } \Omega \text{ has boundary } \Gamma,
%     \\[10pt]
%     \displaystyle
%     \mathbb{E}\Big[\;\sum_{k=0}^{K^i-1}
%     W_k^i\,\omega(Y_k^i)\,F_k^i\;\Big],
%     & \text{if } \Omega \text{ is closed}.
% \end{cases}
% \end{equation}
Based on the above construction, the solution of \eqref{eq:boundary_representation} and \eqref{eq:boundaryless_representation} can be computed by
 \begin{equation}\label{eq:mc_estimator}
u_\Omega(y_0)
\approx
\lim_{N \to \infty} \frac{1}{N} \sum_{i=1}^N S^i(y_0)=
\mathbb{E}\bigl[S^i(y_0)\bigr].
\end{equation}

Finally, the above derivations lead to the Monte Carlo algorithm outlined in Algorithm \ref{alg:projected_wos}.

\begin{algorithm}[ht!]
\caption{Projected Walk on Spheres method for \eqref{eq:surface_pde}}
\label{alg:projected_wos}
\begin{algorithmic}[1]
\Require The number of paths $N$ and the number of samples per path $N_V$;
\Require The boundary threshold $\varepsilon$ and the truncation threshold $\tau_W$;
\Require The initial point $y_0\in\Omega$.
% \Ensure The Monte Carlo estimator 
% $\bar u_{\Omega,N}(y_0)=N^{-1}\sum_{i=1}^{N}S^i(y_0)$.

\For{$i=1,2,\ldots,N$}
    \State Set $k=0$, $Y_0^i=y_0$, and $W_0^i=1$.
    \While{the stopping criterion is not satisfied}
        \State Compute the radius $r(Y_k^i)$ by \eqref{eq:radius_rule_unified}.
        \State Construct the random variable $Y_{k+1}^i$ by the closest-point projection \eqref{eq:projected_chain_update}.
        \State Draw $Z_{k+1}^{i,\ell}$, $\ell=1,\ldots,N_V$, according to the density $\rho(Y_k^i,\cdot)$.
        \State Update the weight by $W_{k+1}^i=W_k^i\,c_{\sigma,r(Y_k^i)}$.
        \If{$\Omega$ has boundary $\Gamma$ and $Y_{k+1}^i\in\Gamma_\varepsilon$}
            \State Set $K^i=k+1$ and compute $S^i(y_0)$ by \eqref{eq:path_score_definition}.
        \ElsIf{$\Omega$ is closed and $W_{k+1}^i<\tau_W$}
            \State Set $K^i=k+1$ and compute $S^i(y_0)$ by \eqref{eq:path_score_definition}.
        \Else
            \State Set $k=k+1$.
        \EndIf
    \EndWhile
\EndFor
{\Statex \hspace{-\algorithmicindent}\textbf{Output:}\, The Monte Carlo estimator 
$\bar u_{\Omega,N}(y_0)=N^{-1}\sum_{i=1}^{N}S^i(y_0)$.}
\end{algorithmic}
\end{algorithm}

%Compared with existing projected Walk-on-Spheres methods for surface PDEs (cf. \cite{Sugimoto2024}), the proposed approach incorporates several significant extensions. First, we establish a local probabilistic representation and the associated projected recursion for the screened Poisson equation on embedded manifolds. Second, the framework introduces a compensation mechanism to mitigate the discrepancy between the ambient Laplacian and the intrinsic operator, coupled with an adaptive radius strategy informed by local geometric properties. Third, the method is designed for a broad class of embedded manifolds, encompassing parametrized manifolds, implicitly defined manifolds, and point-cloud representations, and it naturally extends to high-dimensional settings. We next proceed with a rigorous error analysis of the proposed projected Walk-on-Spheres method.

\section{The error estimate and theoretical analysis}\label{sec:error_analysis}

  In this section, we derive error estimates for the projected Monte Carlo estimator introduced in Section \ref{sec:algorithm}. We first record a simple sufficient condition for the uniform upper bound on the screened weights.

% \begin{lemma}\label{lem:uniform_screened_weight}
% Assume that $\Omega$ is compact, and that the radius function has a positive lower bound $r_{\min}>0$ on $\Omega$. Then the screened factor $c_{\sigma,r(y)}$ satisfies
% \begin{equation}\label{eq:uniform_screened_weight}
% c_{\sigma,r(y)}\le c_\ast<1,
% \qquad
% c_\ast=
% \left(
% 1+\frac{\sigma r_{\min}^2}{2n}
% \right)^{-1}.
% \end{equation}
% \end{lemma}
 
\begin{lemma}\label{lem:uniform_screened_weight}
Assume that $\Omega$ is compact. For a fixed $\varepsilon>0$, we define
\begin{equation}\label{omegaepsilon}
\Omega_\varepsilon
:=
\begin{cases}
\Omega, & \text{if }\; \Gamma=\emptyset,\\
\{y\in\Omega:\delta_\Gamma(y)\ge\varepsilon\}, & \text{if }\; \Gamma\ne\emptyset.
\end{cases}
\end{equation}
Assume that the radius function has a positive lower bound on $\Omega_\varepsilon$. Under the radius rule \eqref{eq:radius_rule_unified}, one may take $r_{\min}:=\theta\min\big\{\varepsilon,\inf_{y\in\Omega_\varepsilon}\rho_{\rm geo}(y)\big\}>0$. Then the screened factor $c_{\sigma,r(y)}$ satisfies
\begin{equation}\label{eq:uniform_screened_weight}
c_{\sigma,r(y)}\le c_\ast:=
\Big(1+\frac{\sigma r_{\min}^2}{2n}\Big)^{-1}<1,
%\quad c_\ast=\Big(1+\frac{\sigma r_{\min}^2}{2n}\Big)^{-1},
\quad y\in\Omega_\varepsilon .
\end{equation}
\end{lemma}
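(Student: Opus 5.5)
Two facts are needed: $r(y)\ge r_{\min}$ on $\Omega_\varepsilon$, and $c_{\sigma,r}\le(1+\sigma r^2/(2n))^{-1}$ for every $r>0$. Once both hold, monotonicity in $r$ gives the claim, since $r\mapsto(1+\sigma r^2/(2n))^{-1}$ is decreasing.

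\textbf{Step 1: the lower bound on the radius.} Take $y\in\Omega_\varepsilon$. In the boundary case we have $\delta_\Gamma(y)\ge\varepsilon$ by the definition of $\Omega_\varepsilon$. In the closed case $\delta_\Gamma=+\infty$. Together with $\rho_{\rm geo}(y)\ge\inf_{\Omega_\varepsilon}\rho_{\rm geo}$, the radius rule \eqref{eq:radius_rule_unified} gives $r(y)\ge r_{\min}$. The infimum of $\rho_{\rm geo}$ is positive by the assumed positive lower bound. This also follows from compactness, e.g.\ $\rho_{\rm geo}=c_\kappa/(1+\kappa)$ with $\kappa$ bounded.

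\textbf{Step 2: the bound $c_{\sigma,r}\le(1+\sigma r^2/(2n))^{-1}$.} I would use the series expansion
\[
I_\nu(t)=\sum_{k\ge0}\frac{(t/2)^{2k+\nu}}{k!\,\Gamma(\nu+k+1)}.
\]
Substituting it into \eqref{eq:def_c_sigma_r} gives
\[
\frac{1}{c_{\sigma,r}}=\sum_{k\ge0}\frac{\Gamma(\nu+1)}{k!\,\Gamma(\nu+k+1)}\Big(\frac{\sigma r^2}{4}\Big)^{k}.
\]
Every term is nonnegative, so dropping all terms with $k\ge2$ yields
\[
\frac{1}{c_{\sigma,r}}\ge 1+\frac{\sigma r^2}{4(\nu+1)}=1+\frac{\sigma r^2}{2n},
\]
using $\nu+1=n/2$. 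Taking reciprocals gives the bound, and the strict inequality $c_\ast<1$ holds because $\sigma,r_{\min}>0$.

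A probabilistic alternative would be $c_{\sigma,r}=\mathbb E[e^{-\sigma\tau}]$, where $\tau$ is the exit time of Brownian motion (generator $\Delta$) from the ball. Jensen's inequality then gives $c_{\sigma,r}\ge e^{-\sigma\mathbb E\tau}$, but that is a lower bound, the wrong direction. So I prefer the series route.

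\textbf{Main obstacle.} The only care needed is in matching the normalization of the series to $\nu=(n-2)/2$ in \eqref{eq:def_c_sigma_r}, so that the constant $2n$ comes out exactly. The rest is routine monotonicity.
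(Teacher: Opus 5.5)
Your proposal is correct and follows the paper's proof essentially step for step: you use the same Bessel series for $1/c_{\sigma,r}$ truncated after the linear term (your $\Gamma(\nu+1)/\Gamma(\nu+k+1)$ is the paper's $1/(\nu+1)_k$), combined with $r(y)\ge r_{\min}$ on $\Omega_\varepsilon$ from the radius rule. Your explicit treatment of the closed case ($\delta_\Gamma=+\infty$) and your observation that $\inf\rho_{\rm geo}>0$ follows from $r\le\theta\rho_{\rm geo}$ are small additions the paper leaves implicit.
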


\begin{proof}
By \eqref{eq:def_c_sigma_r} and the definition of the modified Bessel function of the first kind $I_\nu$, we obtain
\begin{equation*}
c_{\sigma,r}
=
\frac{(\sqrt{\sigma}\,r)^\nu}
{2^\nu\Gamma(\nu+1)I_\nu(\sqrt{\sigma}\,r)}
=\bigg[
\sum_{j=0}^{\infty}
\frac{(\sigma r^2/4)^j}{j!(\nu+1)_j}
\bigg]^{-1},\quad \nu=\frac{n-2}{2}.
\end{equation*}
where $(\nu+1)_j=(\nu+1)(\nu+2)\cdots(\nu+j)$ for $j\ge1$ and $(\nu+1)_0=1$. Since every term in the series is nonnegative, we conclude that
\begin{equation*}
\sum_{j=0}^{\infty}
\frac{(\sigma r^2/4)^j}{j!(\nu+1)_j}
\ge
1+\frac{\sigma r^2/4}{\nu+1}
=
1+\frac{\sigma r^2}{4(\nu+1)}=1+\frac{\sigma r^2}{2n}.
\end{equation*}
% Using the positive lower bound $r(y)\ge r_{\min}$ on $\Omega$, it follows that, for all $y\in\Omega$,
% \begin{equation*}
% c_{\sigma,r(y)}
% \le
% \bigg(
% 1+\frac{\sigma r_{\min}^2}{2n}
% \bigg)^{-1}
% =:c_\ast<1,
% \end{equation*}
% where $c_\ast<1$ follows from $\sigma>0$ and $r_{\min}>0$. This proves \eqref{eq:uniform_screened_weight}.
Using the positive lower bound $r(y)\ge r_{\min}$ on $\Omega_\varepsilon$, it follows that, for all $y\in\Omega_\varepsilon$,
\begin{equation*}
c_{\sigma,r(y)}
\le
\Big(1+\frac{\sigma r_{\min}^2}{2n}\Big)^{-1}
=c_\ast<1,
\end{equation*}
where $c_\ast<1$ follows from $\sigma>0$ and $r_{\min}>0$.

It remains to justify the stated choice of $r_{\min}$ under the radius rule \eqref{eq:radius_rule_unified}. In the manifold-with-boundary case, the projected chain is stopped once it enters the boundary layer $\Gamma_\varepsilon$, and hence all radii used before stopping are evaluated at points in $\Omega_\varepsilon$. For any $y\in\Omega_\varepsilon$, one has $\delta_\Gamma(y)\ge\varepsilon$. Therefore, by \eqref{eq:radius_rule_unified}, we have 
$
r(y)
\ge
\theta
\min\big\{
\varepsilon,\inf_{y\in\Omega_\varepsilon}\rho_{\rm geo}(y)
\big\}
=
r_{\min}.
$
This proves \eqref{eq:uniform_screened_weight}.
\end{proof}

The preceding lemma provides a simple sufficient condition for the uniform bound $c_{\sigma,r(y)}\le c_\ast<1$. Then, we use this bound to control the accumulated weights along the projected chain.

\begin{lemma}\label{lem:variance_bound}
Let $\bar{u}_{\Omega,N}(y_0):=\frac{1}{N}\sum_{i=1}^N S^i(y_0)$ denote the numerical solution associated with the exact closest-point map $\operatorname{cp}$, with $S^i(y_0)$ given in \eqref{eq:path_score_definition}.  
Assume that $\Omega$ is compact and that the radius function admits a positive lower bound $r_{\min}>0$ on $\Omega_\varepsilon$, where $\Omega_\varepsilon$ is defined in \eqref{omegaepsilon}. Suppose further that there exist constants $M_f,M_g,M_m>0$ {such that $|f(x)|\le M_f$ for all $x\in U_\delta^\Omega$}, $|g_\Omega(y)|\le M_g$ for all $y\in\Gamma$ when $\Omega$ has boundary $\Gamma$, and $|m_\ast(x)|\le M_m$ for all $x\in U_\delta^\Omega$. Then, we have
\begin{equation}\label{eq:variance_bound_first}
\operatorname{Var}\bigl(\bar{u}_{\Omega,N}(y_0)\bigr)
\le  \frac{C_1}{N}+\frac{C_2}{N N_V},
\end{equation}
where the positive constants $C_1$ and $C_2$ are independent of $N$ and $N_V$. % In the boundary case, the constant $C$ depend on $\varepsilon$ through $r_{\min}$ and $c_\ast$. %{\color{red}Consequently, $\operatorname{Var}\bigl(\bar{u}_{\Omega,N}(y_0)\bigr)\le\mathcal O(N^{-1})$.}
\end{lemma}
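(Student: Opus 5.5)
Since the paths are i.i.d., $\operatorname{Var}(\bar u_{\Omega,N}(y_0))=\frac1N\operatorname{Var}(S^1(y_0))$, so it suffices to show $\operatorname{Var}(S(y_0))\le C_1+C_2/N_V$ for a single path. Drop the superscript $i$ and split $S=B+V$. The boundary part is $B:=W_K g_\Omega(\operatorname{cp}_\Gamma(Y_K))$, which is absent in the closed case. The volume part is $V:=\sum_{k=0}^{K-1}W_k\omega(Y_k)F_k$. Further decompose $F_k=\bar F_k+\xi_k$, where $\bar F_k:=\mathbb E[F_k\mid\mathcal F_k]$ is the conditional mean given the chain history $\mathcal F_k$ (the $\sigma$-algebra generated by $Y_0,\dots,Y_k$ and the earlier interior samples). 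The fluctuation $\xi_k$ then has conditional mean zero, and since the $Z^{\ell}_{k+1}$ are i.i.d. given $Y_k$ and $|f+m_\ast|\le M_f+M_m$, it satisfies $\mathbb E[\xi_k^2\mid\mathcal F_k]\le (M_f+M_m)^2/N_V$. Using $(a+b)^2\le 2a^2+2b^2$, I would bound $\operatorname{Var}(S)\le\mathbb E[S^2]\le 2\mathbb E[(B+\bar V)^2]+2\mathbb E[(\sum_k W_k\omega(Y_k)\xi_k)^2]$, where $\bar V:=\sum_k W_k\omega(Y_k)\bar F_k$.

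The key deterministic ingredient is a geometric bound on the weights. All radii used before stopping are evaluated at points of $\Omega_\varepsilon$, so Lemma~\ref{lem:uniform_screened_weight} gives $c_{\sigma,r(Y_j)}\le c_\ast<1$ for every $j<K$, hence $W_k\le c_\ast^k$. Also $0\le\omega(y)=(1-c_{\sigma,r(y)})/\sigma\le 1/\sigma$. From these, $|B|\le M_g$ and $|\bar V|\le \frac{M_f+M_m}{\sigma}\sum_{k\ge0}c_\ast^k=\frac{M_f+M_m}{\sigma(1-c_\ast)}$. Thus the first term is bounded by the constant $C_1:=2\bigl(M_g+\frac{M_f+M_m}{\sigma(1-c_\ast)}\bigr)^2$, uniformly in the (possibly random and unbounded) stopping index $K$. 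The telescoping refinement $\sum_k W_k(1-c_{\sigma,r(Y_k)})=1-W_K\le1$ gives a bound without $1-c_\ast$, but either version suffices.

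For the fluctuation term, the main point is a martingale orthogonality argument. The weights $W_k\omega(Y_k)\mathbf 1_{\{k<K\}}$ are $\mathcal F_k$-measurable, because $K$ is a stopping time for the chain and $\{k<K\}$ is determined by $Y_0,\dots,Y_k$. Moreover, $\xi_k$ is conditionally centered given $\mathcal F_k$ and is $\mathcal F_{k+1}$-measurable. Hence the cross terms $\mathbb E[W_k\omega_k\xi_k W_j\omega_j\xi_j]$ vanish for $j<k$, after conditioning on $\mathcal F_k$. To make the infinite sum rigorous, I would first truncate at $K\wedge M$, apply the orthogonality, and then let $M\to\infty$ using monotone convergence together with $L^2$ convergence from the geometric bound. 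This yields
\begin{equation*}
\mathbb E\Bigl[\Bigl(\sum_{k<K}W_k\omega(Y_k)\xi_k\Bigr)^2\Bigr]=\sum_{k}\mathbb E\bigl[\mathbf 1_{\{k<K\}}W_k^2\omega(Y_k)^2\xi_k^2\bigr]\le \frac{(M_f+M_m)^2}{\sigma^2 N_V}\sum_k c_\ast^{2k},
\end{equation*}
so the second term is at most $C_2/N_V$ with $C_2:=\frac{2(M_f+M_m)^2}{\sigma^2(1-c_\ast^2)}$. Dividing by $N$ gives \eqref{eq:variance_bound_first}.

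I expect the main obstacle to be the careful justification of the filtration and stopping-time measurability. One must make sure the interior samples at step $k$ do not influence the stopping decision, which holds because $K$ depends only on the $Y_j$. In the closed case, the truncation rule $W_k<\tau_W$ must also be a stopping time, which it is since $W_k$ is $\mathcal F_{k-1}$-measurable. Everything else reduces to the geometric series supplied by Lemma~\ref{lem:uniform_screened_weight}.
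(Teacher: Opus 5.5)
Your proposal is correct and follows essentially the paper's argument. Both split $S$ into the terminal term, the conditional-mean volume part and the volume-sampling fluctuation, and both use $W_k\le c_\ast^k$ and $\omega\le 1/\sigma$ to get the geometric sums $\sum_k c_\ast^k$ and $\sum_k c_\ast^{2k}/N_V$. The only real difference is that you make the cancellation of the cross terms explicit via martingale-difference orthogonality along a filtration, with truncation at $K\wedge M$, whereas the paper fixes the projected path and cites independence of the volume samples across steps, which is the same fact stated less carefully; your telescoping identity $\sum_k W_k(1-c_{\sigma,r(Y_k)})=1-W_K\le 1$ is a small sharpening the paper does not use.
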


\begin{proof}
We first present the argument for the boundary case. Since the random samples
$\{S^i(y_0)\}_{i=1}^N$ are independent and identically distributed, we have
$\operatorname{Cov}(S^i(y_0),\allowbreak S^j(y_0))=0$, for $i\neq j$. Thus,
\begin{equation*}
\begin{split}
\operatorname{Var}\bigl(\bar{u}_{\Omega,N}(y_0)\bigr)
&=
\operatorname{Var}\!\bigg(
\frac{1}{N}\sum_{i=1}^N \bigg[W_{K^i}^i
g_\Omega\!\bigl(\operatorname{cp}_\Gamma(Y_{K^i}^i)\bigr)
+
\sum_{k=0}^{K^i-1} W_k^i\,\omega(Y_k^i)\,F_k^i\bigg]
\bigg)\\
&\le
\frac{2}{N}\operatorname{Var}\!\big[
W_K\,g_\Omega\!\bigl(\operatorname{cp}_\Gamma(Y_K)\bigr)
\big]
+
\frac{2}{N}\operatorname{Var}\!\bigg[\;
\sum_{k=0}^{K-1} W_k\,\omega(Y_k)\,F_k \;\bigg].
\end{split}
\end{equation*}
% If $\Omega$ is closed or $g_\Omega\equiv0$, then the terminal term vanishes identically. Therefore, in the remainder of the proof, we only consider the case where $\Omega$ has boundary $\Gamma$ and $g_\Omega\not\equiv0$.
{If $g_\Omega\equiv0$, the terminal term vanishes identically. The closed-manifold case is obtained by omitting the terminal boundary contribution. Therefore, in the remainder of the proof, it suffices to estimate the two terms on the right hand side in the boundary case with $g_\Omega\not\equiv0$.}

For the first term, since $0<W_K\le1$ and $|g_\Omega(y)|\le M_g$ for all $y\in\Gamma$, we have
\begin{equation*}
\begin{split}
\operatorname{Var}\!\big[
W_K\,g_\Omega\!\bigl(\operatorname{cp}_\Gamma(Y_K)\bigr)
\big]
&=\mathbb{E}\bigg[\Big(W_K\,g_\Omega\!\bigl(\operatorname{cp}_\Gamma(Y_K)\bigr)\Big)^2\bigg]
-
\Big(\mathbb{E}\big[W_K\,g_\Omega\!\bigl(\operatorname{cp}_\Gamma(Y_K)\bigr)\big]\Big)^2\\
&\le
\mathbb{E}\bigg[
\Big(W_K\,g_\Omega\!\bigl(\operatorname{cp}_\Gamma(Y_K)\bigr)\Big)^2
\bigg]
\le
M_g^2\,\mathbb{E}\big[(W_K)^2\big]\le M_g^2.
\end{split}
\end{equation*}

% For the second term, by \eqref{eq:local_average_source} and the boundedness of $f_\Omega$ and $m_\ast$, we have
% \begin{equation*}
% |F_k|
% \le
% \frac{1}{N_V}\sum_{\ell=1}^{N_V}
% \big(
% |f_\Omega(Z_{k+1}^{\ell})|+|m_\ast(Z_{k+1}^{\ell})|
% \big)
% \le M_f+M_m.
% \end{equation*}
{For the second term, by \eqref{eq:local_average_source} and the boundedness of $f$ and $m_\ast$, we have
\begin{equation*}
|F_k|
\le
\frac{1}{N_V}\sum_{\ell=1}^{N_V}
\big(
|f(Z_{k+1}^{\ell})|+|m_\ast(Z_{k+1}^{\ell})|
\big)
\le M_f+M_m.
\end{equation*}}
By Lemma \ref{lem:uniform_screened_weight}, there exists $c_\ast\in(0,1)$ such that $c_{\sigma,r(y)}\le c_\ast$ for all {$y\in\Omega_{\varepsilon}$}. Also, by \eqref{eq:omega_explicit_formula}, we have
\begin{equation}\label{omegaboundx}
\omega(y)=\frac{1-c_{\sigma,r(y)}}{\sigma}\le \frac{1}{\sigma},
\quad W_k=\prod_{j=0}^{k-1}c_{\sigma,r(Y_j)}\le c_\ast^k.
\end{equation}

Fixing the projected path, write $F_k=\Phi_k+R_k^{\mathrm{vol}}$, where $\Phi_k$ is the mean of the local Green-density estimator, and $R_k^{\mathrm{vol}}$ is the corresponding volume-sampling residual. Then $\mathbb{E}[R_k^{\mathrm{vol}}]=0$, and since $F_k$ is the average of $N_V$ independent Green-density samples bounded by $M_f+M_m$, we have
\begin{equation*}
\operatorname{Var}_{\mathrm{vol}}(R_k^{\mathrm{vol}})
=
\operatorname{Var}_{\mathrm{vol}}(F_k-\Phi_k)
=
\operatorname{Var}_{\mathrm{vol}}(F_k)
\le
\frac{(M_f+M_m)^2}{N_V}.
\end{equation*}
Moreover, the bound $|\Phi_k|\le M_f+M_m$ holds uniformly for all $k$. Combining this relation with the decomposition of $F_k$, we deduce that
\begin{equation}\label{term11}
\begin{split}
\operatorname{Var}\!\bigg[
\sum_{k=0}^{K-1} W_k\,\omega(Y_k)\,F_k
\bigg]
&=
\operatorname{Var}\!\bigg[
\sum_{k=0}^{K-1} W_k\,\omega(Y_k)\,\Phi_k
+
\sum_{k=0}^{K-1} W_k\,\omega(Y_k)\,R_k^{\mathrm{vol}}
\bigg]\\
&\le
2\operatorname{Var}\!\bigg[
\sum_{k=0}^{K-1} W_k\,\omega(Y_k)\,\Phi_k
\bigg]
+
2\operatorname{Var}\!\bigg[
\sum_{k=0}^{K-1} W_k\,\omega(Y_k)\,R_k^{\mathrm{vol}}
\bigg].
\end{split}
\end{equation}

For the first term of  \eqref{term11}, using $|\Phi_k|\le M_f+M_m$, we obtain
\begin{equation*}
\begin{split}
\operatorname{Var}\!\bigg[
\sum_{k=0}^{K-1} W_k\,\omega(Y_k)\,\Phi_k
\bigg]
&\le
\mathbb{E}\bigg[
\bigg(
\sum_{k=0}^{K-1} W_k\,\omega(Y_k)\,\Phi_k
\bigg)^2
\bigg]\\
&\le
\mathbb{E}\bigg[
\bigg(
\frac{M_f+M_m}{\sigma}\sum_{k=0}^{K-1}c_\ast^k
\bigg)^2
\bigg]
\le
\bigg(
\frac{M_f+M_m}{\sigma(1-c_\ast)}
\bigg)^2.
\end{split}
\end{equation*}

For the second term of \eqref{term11}, fixing the projected path and using the independence of the volume samples used at different steps, we have
\begin{eqnarray*}
&&\operatorname{Var}_{\mathrm{vol}}\!\bigg[
\sum_{k=0}^{K-1} W_k\,\omega(Y_k)\,R_k^{\mathrm{vol}}
\bigg]=\sum_{k=0}^{K-1}
\big(W_k\,\omega(Y_k)\big)^2
\operatorname{Var}_{\mathrm{vol}}(R_k^{\mathrm{vol}})\\
&&\hspace{120pt}\le
\frac{(M_f+M_m)^2}{N_V}
\sum_{k=0}^{K-1}
\big(W_k\,\omega(Y_k)\big)^2\\
&&\hspace{120pt}\le
\frac{(M_f+M_m)^2}{N_V\sigma^2}
\sum_{k=0}^{K-1}c_\ast^{2k}
\le
\frac{(M_f+M_m)^2}{N_V\sigma^2(1-c_\ast^2)}.
\end{eqnarray*}
Together with the estimate for the first part, this gives
\begin{equation*}
\operatorname{Var}\!\bigg[
\sum_{k=0}^{K-1} W_k\,\omega(Y_k)\,F_k
\bigg]
\le
2\bigg(
\frac{M_f+M_m}{\sigma(1-c_\ast)}
\bigg)^2
+
\frac{2(M_f+M_m)^2}{N_V\sigma^2(1-c_\ast^2)}.
\end{equation*}

Combining the above estimates yields
\begin{equation*}
\begin{split}
\operatorname{Var}\bigl(\bar{u}_{\Omega,N}(y_0)\bigr)
&\le
\frac{2M_g^2}{N}
+
\frac{4}{N}
\bigg(
\frac{M_f+M_m}{\sigma(1-c_\ast)}
\bigg)^2
+
\frac{4(M_f+M_m)^2}{N N_V\sigma^2(1-c_\ast^2)}\le
\frac{C_1}{N}
+
\frac{C_2}{N N_V},
\end{split}
\end{equation*}
where $C_1$ and $C_2$ are constants independent of $N$ and $N_V$. In the manifold-with-boundary case, these constants may depend on $\varepsilon$ through $r_{\min}$, equivalently through $c_\ast$. Since $N_V\ge1$, \eqref{eq:variance_bound_first} follows.  
%\begin{equation*}
%\operatorname{Var}\bigl(\bar{u}_{\Omega,N}(y_0)\bigr)
%\le
%\frac{C_1+C_2}{N}
%=
%\mathcal O(N^{-1}).
%\end{equation*}
%
In the closed manifold case, the terminal term is absent, and the same argument yields the same bound with $M_g=0$. This ends the proof.
\end{proof}

%Estimate \eqref{eq:variance_bound_first} distinguishes the variance generated by the projected chain from that generated by the Green-density volume sampling. The former contributes the term $C_1/N$, while the latter contributes $C_2/(N N_V)$. Thus, increasing $N_V$ reduces only the volume-sampling contribution to the variance, while the leading convergence rate with respect to the number of paths remains $N^{-1/2}$. 
We next impose the following boundary regularity condition.

%\begin{assumption}\label{ass:boundary_lipschitz}
%The following boundary regularity condition holds. When $\Omega$ has boundary $\Gamma$, 
%% let $\delta_\Gamma(y):=\operatorname{dist}(y,\Gamma)$ for $y$ near $\Gamma$. 
%{\color{blue}let $\delta_\Gamma(y)$ denote the effective boundary distance used in the stopping rule and in the radius construction.}
%We assume that $u_\Omega$ is Lipschitz continuous in a neighborhood of $\Gamma$, namely, there exists a constant $L_\Gamma>0$ such that
%\begin{equation}\label{eq:boundary_lipschitz}
%|u_\Omega(y)-u_\Omega(\operatorname{cp}_\Gamma(y))|
%\le L_\Gamma\,\delta_\Gamma(y),
%\quad y\in\Gamma_\varepsilon.
%\end{equation}
%\end{assumption}

%We next consider the error introduced by the boundary stopping rule.
\begin{assumption}\label{ass:boundary_lipschitz}
Assume the following boundary regularity condition holds. When $\Omega$ has boundary $\Gamma$, let $\delta_\Gamma(y)$ denote the effective boundary distance used in the stopping rule and in the radius construction. We assume that $u_\Omega$ is Lipschitz continuous in a neighborhood of $\Gamma$, i.e., there exists a constant $L_\Gamma>0$ such that
\begin{equation}\label{eq:boundary_lipschitz}
|u_\Omega(y)-u_\Omega(\operatorname{cp}_\Gamma(y))| \le L_\Gamma \,\delta_\Gamma(y), \quad y \in \Gamma_\varepsilon.
\end{equation}
\end{assumption}

We now estimate the error caused by the boundary stopping procedure under Assumption \ref{ass:boundary_lipschitz}.

%\begin{lemma}\label{lem:boundary_layer_error}
%Assume that $\Omega$ has boundary $\Gamma$. Under Assumption \ref{ass:boundary_lipschitz}, the error caused by the boundary stopping procedure satisfies
%\begin{equation}\label{eq:boundary_layer_error}
%\big|
%\mathbb E\!\big[
%W_K\,u_\Omega(Y_K)
%\big]
%-
%\mathbb E\!\big[
%W_K\,g_\Omega(\operatorname{cp}_\Gamma(Y_K))
%\big]
%\big|
%\le
%L_\Gamma\,\varepsilon,
%\end{equation}
%{\color{blue} where $L_\Gamma$ is the boundary Lipschitz constant in Assumption \ref{ass:boundary_lipschitz}.}
%\end{lemma}
\begin{lemma}\label{lem:boundary_layer_error}
Assume that $\Omega$ has boundary $\Gamma$. Under Assumption \ref{ass:boundary_lipschitz}, the error introduced by the boundary stopping procedure satisfies
\begin{equation}\label{eq:boundary_layer_error}
\Big|\mathbb E\big[ W_K\,u_\Omega(Y_K) \big]-\mathbb E\big[ W_K\,g_\Omega(\operatorname{cp}_\Gamma(Y_K)) \big]\Big|\le L_\Gamma\,\varepsilon,
\end{equation}
where $L_\Gamma$ is the boundary Lipschitz constant in \eqref{eq:boundary_lipschitz}.
\end{lemma}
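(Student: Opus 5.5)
The plan is to bound the difference inside the expectation pathwise and then take expectations. By linearity,
\[
\mathbb E\big[W_K u_\Omega(Y_K)\big]-\mathbb E\big[W_K g_\Omega(\operatorname{cp}_\Gamma(Y_K))\big]
=\mathbb E\Big[W_K\big(u_\Omega(Y_K)-g_\Omega(\operatorname{cp}_\Gamma(Y_K))\big)\Big].
\]
Since $\operatorname{cp}_\Gamma(Y_K)\in\Gamma$, the Dirichlet condition \eqref{eq:surface_bc} gives $g_\Omega(\operatorname{cp}_\Gamma(Y_K))=u_\Omega(\operatorname{cp}_\Gamma(Y_K))$. The integrand therefore becomes $W_K\big(u_\Omega(Y_K)-u_\Omega(\operatorname{cp}_\Gamma(Y_K))\big)$.

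Next I control each factor separately. For the weight, each $c_{\sigma,r}$ is positive and at most $1$. This follows from the series representation used in the proof of Lemma~\ref{lem:uniform_screened_weight}, whose denominator is at least $1$. Hence $0<W_K\le 1$, as already used in Lemma~\ref{lem:variance_bound}. For the other factor, by the definition of the stopping index $K=\inf\{k\ge0:Y_k\in\Gamma_\varepsilon\}$ (assumed a.s.\ finite, as in Theorem~\ref{thm:final_representation}), we have $Y_K\in\Gamma_\varepsilon$, i.e.\ $\delta_\Gamma(Y_K)<\varepsilon$. Assumption~\ref{ass:boundary_lipschitz} then yields
\[
\big|u_\Omega(Y_K)-u_\Omega(\operatorname{cp}_\Gamma(Y_K))\big|\le L_\Gamma\,\delta_\Gamma(Y_K)\le L_\Gamma\,\varepsilon .
\]

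Combining the two, the integrand is bounded pathwise by $L_\Gamma\varepsilon$. Jensen's inequality $|\mathbb E X|\le\mathbb E|X|$ then gives \eqref{eq:boundary_layer_error}.

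The main subtlety is measure-theoretic rather than analytic. We need $K<\infty$ a.s.\ so that $Y_K$ is defined, and the integrands must be bounded so the expectations exist. Boundedness follows from $W_K\le1$ together with boundedness of $u_\Omega$ near $\Gamma$ and of $g_\Omega$ ($|g_\Omega|\le M_g$). A further point needs stating explicitly: the terminal state may already satisfy $Y_0=y_0\in\Gamma_\varepsilon$, in which case $K=0$ and $W_0=1$, and the same bound applies.
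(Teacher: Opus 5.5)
Your proposal is correct and follows essentially the same argument as the paper: replace $g_\Omega(\operatorname{cp}_\Gamma(Y_K))$ by $u_\Omega(\operatorname{cp}_\Gamma(Y_K))$ via the Dirichlet condition, use $0<W_K\le 1$, the stopping rule $\delta_\Gamma(Y_K)<\varepsilon$ and Assumption~\ref{ass:boundary_lipschitz}, and then apply $|\mathbb E X|\le \mathbb E|X|$. The details the paper leaves implicit are sound additions: deriving $W_K\le 1$ from the series for $c_{\sigma,r}$, and your remarks on a.s.\ finiteness of $K$, integrability, and the case $K=0$.
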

\begin{proof}
Since $Y_K\in\Gamma_\varepsilon$ by the stopping rule, we have $\delta_\Gamma(Y_K)<\varepsilon$. Moreover, since $u_\Omega=g_\Omega$ on $\Gamma$, it follows that
$u_\Omega(\operatorname{cp}_\Gamma(Y_K))=g_\Omega(\operatorname{cp}_\Gamma(Y_K)).$
Consequently, the difference of the two expectations can be rewritten as
\begin{equation*}
\begin{split}
\Big|\mathbb E\!\big[W_K\,u_\Omega(Y_K)\big]-\mathbb E\!\big[W_K\,g_\Omega(\operatorname{cp}_\Gamma(Y_K))\big]\Big|
&=\Big|\mathbb E\!\big[W_K\big(
u_\Omega(Y_K)-u_\Omega(\operatorname{cp}_\Gamma(Y_K))
\big)
\big]
\Big|\\
&\le
\mathbb E\!\big[
W_K
\big|
u_\Omega(Y_K)-u_\Omega(\operatorname{cp}_\Gamma(Y_K))
\big|
\big].
\end{split}
\end{equation*}
Since $0<W_K\le 1$, the boundary Lipschitz estimate \eqref{eq:boundary_lipschitz} implies that
\begin{equation*}
\begin{split}
\Big|\mathbb E\!\big[
W_K\,u_\Omega(Y_K)
\big]-\mathbb E\!\big[
W_K\,g_\Omega(\operatorname{cp}_\Gamma(Y_K))
\big]
\Big|
&\le
\mathbb E\!\big[
\big|
u_\Omega(Y_K)-u_\Omega(\operatorname{cp}_\Gamma(Y_K))
\big|
\big]\le
L_\Gamma\,\mathbb E[\delta_\Gamma(Y_K)]
\le
L_\Gamma\,\varepsilon.
\end{split}
\end{equation*}
This ends the proof.
\end{proof}

%We now turn to the closed manifold case and estimate the truncation error caused by the stopping rule $W_K<\tau_W$.
%We now consider the closed manifold case and estimate the truncation error from the stopping rule. % $W_K<\tau_W$.

% \begin{lemma}\label{lem:truncation_error}
% Assume that $\Omega$ is closed, and there exists a constant $M_u>0$ such that $|u_\Omega(y)|\le M_u,$ for all $y\in\Omega$. Then,
% \begin{equation}\label{eq:truncation_error}
% \left|
% \mathbb E\!\left[
% W_K\,u_\Omega(Y_K)
% \right]
% \right|
% \le
% M_u\,\tau_W.
% \end{equation}
% \end{lemma}

% \begin{proof}
% By the stopping rule in the closed manifold case, we have $W_K<\tau_W$. Hence,
% \[
% \left|
% \mathbb E\!\left[
% W_K\,u_\Omega(Y_K)
% \right]
% \right|\le
% \mathbb E\!\left|
% W_K\,u_\Omega(Y_K)
% \right|
% \le
% \mathbb E\!\left[W_K\,|u_\Omega(Y_K)|\right]
% \le
% M_u\,\tau_W.
% \]
% This ends the proof.
% \end{proof}

%We finally estimate the error introduced by replacing the exact compensation term $m$ with its approximation $m_\ast$.
We finally estimate the error from replacing the exact compensation term $m$ with its approximation $m_\ast$.
\begin{lemma}\label{lem:compensation_error}
Assume that there exists a constant $\eta_m>0$ and $c_\ast\in(0,1)$ such that $|m(x)-m_\ast(x)|\le \eta_m$ for all $x\in U_\delta^\Omega$ and $c_{\sigma,r(y)}\le c_\ast$, for all $y\in\Omega_{\varepsilon}$. Then,
\begin{equation}\label{eq:compensation_error}
\bigg|\mathbb E\!\Big[\sum_{k=0}^{K-1}W_k\,\omega(Y_k)\,\bigl(m(Z_{k+1})-m_\ast(Z_{k+1})\bigr)\Big]\bigg|
\le \frac{\eta_m}{\sigma(1-c_\ast)}.
\end{equation}
\end{lemma}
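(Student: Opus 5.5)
The plan is to bound the expectation by the expectation of the absolute value and then control each summand pathwise, exactly as in the estimate of the first term in the proof of Lemma~\ref{lem:variance_bound}. First, move the absolute value inside:
\[
\bigg|\mathbb E\Big[\sum_{k=0}^{K-1}W_k\,\omega(Y_k)\bigl(m(Z_{k+1})-m_\ast(Z_{k+1})\bigr)\Big]\bigg|
\le
\mathbb E\Big[\sum_{k=0}^{K-1}W_k\,\omega(Y_k)\,\bigl|m(Z_{k+1})-m_\ast(Z_{k+1})\bigr|\Big].
\]
This uses $W_k>0$ and $\omega(Y_k)>0$. Positivity of $\omega$ follows from $0<c_{\sigma,r}<1$, since the series expression in the proof of Lemma~\ref{lem:uniform_screened_weight} is strictly larger than $1$.

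Next, bound the three factors in each term. Since $Z_{k+1}\in\mathbb B^n_{r(Y_k)}(Y_k)\subset U_\delta^\Omega$ by the admissibility condition \eqref{eq:admissible_ball}, the hypothesis gives $|m(Z_{k+1})-m_\ast(Z_{k+1})|\le\eta_m$. By \eqref{eq:omega_explicit_formula}, $\omega(Y_k)=(1-c_{\sigma,r(Y_k)})/\sigma\le 1/\sigma$. For the weights, every index $j\le k-1<K$ precedes the stopping step. In the boundary case this gives $Y_j\notin\Gamma_\varepsilon$, i.e.\ $Y_j\in\Omega_\varepsilon$, so $c_{\sigma,r(Y_j)}\le c_\ast$ and hence $W_k\le c_\ast^k$, as in \eqref{omegaboundx}. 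In the closed case $\Omega_\varepsilon=\Omega$, so the same bound holds for all $j$.

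Combining these, the integrand is dominated pathwise by $\frac{\eta_m}{\sigma}\sum_{k=0}^{K-1}c_\ast^k\le\frac{\eta_m}{\sigma(1-c_\ast)}$. This bound is uniform in $K$ and holds even if $K=\infty$. Taking expectations gives \eqref{eq:compensation_error}.

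The only delicate point is justifying the exchange of expectation with a possibly random or infinite sum. I would handle it by monotone convergence applied to the nonnegative series (Tonelli). Once the nonnegative series has been shown to have finite expectation, the interchange is justified and the signed sum is well defined. If one uses $N_V$ interior samples per step, the same argument applies verbatim to the averaged difference, which is bounded by $\eta_m$ as well.
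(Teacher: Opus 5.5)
Your proposal is correct and follows the same route as the paper: move the absolute value inside the expectation, use $|m-m_\ast|\le\eta_m$, $\omega(Y_k)\le 1/\sigma$ and $W_k\le c_\ast^k$ from \eqref{omegaboundx}, and sum the geometric series. Your additional justifications make explicit points the paper's proof passes over silently: positivity of $W_k$ and $\omega$, $Y_j\in\Omega_\varepsilon$ for $j<K$, and the pathwise bound that keeps the possibly infinite random sum well defined.
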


\vspace{-4pt}
\begin{proof}
By the assumption on $m-m_\ast$, we have $|m(Z_{k+1})-m_\ast(Z_{k+1})|
\le \eta_m$. 
%Then, we derive from \eqref{eq:omega_explicit_formula}  that
%\[
%\omega(y)=\frac{1-c_{\sigma,r(y)}}{\sigma}\le \frac{1}{\sigma},
%\quad
%W_k=\prod_{j=0}^{k-1}c_{\sigma,r(Y_j)}\le c_\ast^k,
%\]
%with $c_\ast\in(0,1)$. These bounds imply
Combining this with \eqref{omegaboundx}, we obtain that
\begin{equation*}
\begin{split}
\bigg|\mathbb E\bigg[\;\sum_{k=0}^{K-1}W_k\,\omega(Y_k)\,\bigl(m(Z_{k+1})-m_\ast(Z_{k+1})\bigr)\;\bigg]\bigg|&\le\mathbb E\!\bigg[\;\sum_{k=0}^{K-1}W_k\,\omega(Y_k)\,\Big|m(Z_{k+1})-m_\ast(Z_{k+1})\Big|\bigg]\\
&\le
\frac{\eta_m}{\sigma}\sum_{k=0}^{K-1}c_\ast^k
\le
\frac{\eta_m}{\sigma}\sum_{k=0}^{\infty}c_\ast^k
=
\frac{\eta_m}{\sigma(1-c_\ast)}.
\end{split}
\end{equation*}
 In the manifold-with-boundary case, the constant in this estimate may depend on $\varepsilon$ through $c_\ast$.
This ends the proof.
\end{proof}

%\begin{remark}\label{rem:compensation_cases}
%\itshape
%Lemma~\ref{lem:compensation_error} measures the effect of the compensation term through the uniform approximation error $\eta_m:=\|m-m_\ast\|_{L^\infty(U_\delta^\Omega)}$. For the second-order compensation constructed above, one expects $m(x)-m_\ast(x)=O(|\eta(x)|^3)$, provided that the coefficients $m_{1,\ast}$ and $m_{2,\ast}$, computed by the normal finite-difference procedure, approximate $m_1$ and $m_2$ with sufficient accuracy. In this case, if $|\eta(x)|\le r_\ast$ on the sampled tubular neighborhood, then one formally obtains $\eta_m=O(r_\ast^3)$.
%
%In the baseline case $m_\ast\equiv0$, the above lemma reduces to the estimate with $\eta_m=\|m\|_{L^\infty(U_\delta^\Omega)}$. Since $m(y)=0$ on $\Omega$, the corresponding error is caused entirely by the off-manifold mismatch. Therefore, when the local sampling radius is sufficiently small, this contribution may remain negligible compared with the Monte Carlo error, whereas the second-order compensation is expected to further reduce this bias at the price of additional computation.
%\end{remark}
\begin{remark}\label{rem:compensation_cases}
\itshape
Lemma~{\rm\ref{lem:compensation_error}} quantifies the effect of the compensation term through the uniform approximation error
$\eta_m:=\|m-m_\ast\|_{L^\infty(U_\delta^\Omega)}$.
For the second-order compensation above, one expects
$m(x)-m_\ast(x)=O(|\eta(x)|^3)$, provided that the finite-difference approximations of $m_1$ and $m_2$ are sufficiently accurate. Hence, if $|\eta(x)|\le r_\ast$ in the sampled tubular neighborhood, then $\eta_m=O(r_\ast^3)$.

In the baseline case $m_\ast\equiv0$, the above lemma reduces to the estimate with $\eta_m=\|m\|_{L^\infty(U_\delta^\Omega)}$. Since $m(y)=0$ on $\Omega$, the resulting error is caused entirely by the off-manifold mismatch. Therefore, when the local sampling radius is small, this contribution may remain negligible compared with the Monte Carlo error, while the second-order compensation further reduces the bias at additional computational cost.
\end{remark}
 
When the closest-point map is evaluated numerically, the projected step is computed with an approximate map $\operatorname{cp}_h$ instead of $\operatorname{cp}$. We measure this projection error by
\begin{equation}\label{eq:cp_error}
\eta_{\operatorname{cp}}
:=
\sup_{x\in U_\delta^\Omega}
|\operatorname{cp}_h(x)-\operatorname{cp}(x)|.
\end{equation}
%Let $S(y_0)$ denote the path contribution generated by the exact projected chain, and let $S_h(y_0)$ denote the corresponding contribution obtained by replacing $\operatorname{cp}$ with $\operatorname{cp}_h$. 
We assume below that the quantities entering the path contribution depend Lipschitz continuously on the projected point, and that the projected chain is stable under the perturbation $\operatorname{cp}\mapsto\operatorname{cp}_h$.

\begin{assumption}\label{ass:projection_stability}
Let $S(y_0)$ and $S_h(y_0)$ denote the path contributions generated by the exact closest-point map $\operatorname{cp}$ and its numerical approximation $\operatorname{cp}_h$, respectively. We assume that the numerical projected recursion is stable under perturbations of the closest-point projection %, in the sense that
\begin{equation}\label{eq:path_contribution_stability_cp}
\big| \mathbb E[S_h(y_0)]-\mathbb E[S(y_0)] \big| \le C_{\operatorname{cp}}\,\eta_{\operatorname{cp}},
\end{equation}
where the constant $C_{\operatorname{cp}}>0$ is independent of $\eta_{\operatorname{cp}}$ and may depend on the boundary-layer parameter $\varepsilon$ through $\Omega_\varepsilon$ in the manifold-with-boundary case.
\end{assumption}

Now, we are ready to derive the error estimates for the projected Walk on Spheres method in the case of manifolds with boundary and in the closed manifold case. %We write $\bar{u}_{\Omega,N}^{h}(y_0):=\frac{1}{N}\sum_{i=1}^N S_h^i(y_0)$ for the estimator computed with the numerical closest-point map $\operatorname{cp}_h$.

\begin{theorem}\label{thm:total_error_boundary}
Assume that $\Omega$ has boundary $\Gamma$. Let
$\bar{u}_{\Omega,N}^{h}(y_0)$ be the numerical solution associated with closest-point map $\operatorname{cp}_h$, and let $u_\Omega(y_0)$ be the exact solution of \eqref{eq:surface_pde}.
 For any $\varepsilon>0$, under Assumptions \ref{ass:boundary_lipschitz} and \ref{ass:projection_stability}, we have
 \begin{equation}\label{eq:total_error_boundary}
\mathbb E\bigl[|\bar{u}_{\Omega,N}^{h}(y_0)-u_\Omega(y_0)|^2\bigr]\lesssim  N^{-1}+ \varepsilon^2+ \eta_m^2+ \eta_{\operatorname{cp}}^2.
\end{equation}
%\begin{equation}\label{eq:total_error_boundary}
%\mathbb E\bigl[
%|\bar{u}_{\Omega,N}^{h}(y_0)-u_\Omega(y_0)|^2
%\bigr]
%\le C_{\varepsilon}N^{-1}+C_\Gamma\varepsilon^2+C_{\varepsilon}\eta_m^2+C_{\operatorname{cp}}\eta_{\operatorname{cp}}^2,
%\end{equation}
%where the constant $C_{\varepsilon}$ is independent of $N$, $\eta_m$, and $\eta_{\operatorname{cp}}$, but may depend on $\varepsilon$ through $r_{\min}$ and $c_\ast$, while $C_\Gamma$ depends on the boundary Lipschitz constant $L_\Gamma$ in Assumption~\ref{ass:boundary_lipschitz}.
\end{theorem}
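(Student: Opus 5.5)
The plan is a bias--variance decomposition of the mean-square error. Write $\bar u^h:=\bar u^h_{\Omega,N}(y_0)$ and $S_h$ for a single path contribution computed with $\operatorname{cp}_h$. Since the $N$ paths are i.i.d. copies of $S_h$, we have
\[
\mathbb E\bigl[|\bar u^h-u_\Omega(y_0)|^2\bigr]=\operatorname{Var}(\bar u^h)+\bigl|\mathbb E[S_h(y_0)]-u_\Omega(y_0)\bigr|^2 .
\]
We then treat the two terms separately.

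\textbf{Variance term.} We rerun the argument of Lemma~\ref{lem:variance_bound} for the $\operatorname{cp}_h$-chain. That proof uses only three facts: $0<W_K\le1$; $W_k\le c_\ast^k$ with $\omega\le 1/\sigma$, which follows from Lemma~\ref{lem:uniform_screened_weight} because all radii before stopping are evaluated in $\Omega_\varepsilon$; and the bounds $|g_\Omega|\le M_g$, $|f|+|m_\ast|\le M_f+M_m$. None of these depends on whether $\operatorname{cp}$ or $\operatorname{cp}_h$ is used, provided the numerical projection stays in the region where these bounds hold. This gives $\operatorname{Var}(\bar u^h)\le C_1/N+C_2/(NN_V)\lesssim N^{-1}$. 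The constants depend on $\varepsilon$ through $c_\ast$, and the symbol $\lesssim$ absorbs this dependence.

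\textbf{Bias term.} We split it with the triangle inequality:
\[
|\mathbb E S_h-u_\Omega(y_0)|\le|\mathbb E S_h-\mathbb E S|+|\mathbb E S-\mathcal R|+|\mathcal R-u_\Omega(y_0)|.
\]
Here $\mathcal R$ is the exact representation obtained from \eqref{eq:n_step_representation} stopped at the stopping index $K$ (the first entrance into $\Gamma_\varepsilon$):
\[
\mathcal R:=\mathbb E[W_Ku_\Omega(Y_K)]+\mathbb E\Bigl[\sum_{k<K}W_k\omega(Y_k)(f+m)(Z_{k+1})\Bigr].
\]
The three pieces are bounded as follows.
\begin{itemize}
\item The first piece is at most $C_{\operatorname{cp}}\eta_{\operatorname{cp}}$ by Assumption~\ref{ass:projection_stability}.
\item For the second piece, note that the averaging over $N_V$ volume samples does not change conditional means, so $\mathbb E S-\mathcal R$ equals the boundary-layer discrepancy of Lemma~\ref{lem:boundary_layer_error} plus the compensation discrepancy of Lemma~\ref{lem:compensation_error}. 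Hence it is bounded by $L_\Gamma\varepsilon+\eta_m/(\sigma(1-c_\ast))$.
\item For the third piece we must show that $\mathcal R=u_\Omega(y_0)$ exactly. This follows from optional stopping applied to the martingale identity behind \eqref{eq:n_step_representation} at the stopping time $K$.
\end{itemize}
Squaring with $(a+b+c)^2\le3(a^2+b^2+c^2)$ and adding the variance bound gives \eqref{eq:total_error_boundary}.

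\textbf{Main obstacle.} The delicate step is the justification of $\mathcal R=u_\Omega(y_0)$. We would first show that $M_n:=W_nu_\Omega(Y_n)+\sum_{k<n}W_k\omega(Y_k)(f+m)(Z_{k+1})$ is a martingale, using the one-step relation \eqref{eq:one_step_markov}. We would then apply the identity at $K\wedge n$ and let $n\to\infty$. The limit is handled by dominated convergence, since $|M_n|\le \|u_\Omega\|_\infty+(M_f+\|m\|_\infty)/(\sigma(1-c_\ast))$ thanks to the geometric decay $W_k\le c_\ast^k$. The same geometric decay yields $\mathbb E[W_{K\wedge n}u_\Omega(Y_{K\wedge n})\mathbf 1_{K>n}]\to0$ even if $K$ is not almost surely finite. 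This requires $u_\Omega$ and $m$ to be bounded, which we take as part of the standing smoothness assumptions. A secondary point is that the one-step relation needs $Y_k\in\Omega_\varepsilon$ and admissible balls, which the radius rule guarantees before stopping.
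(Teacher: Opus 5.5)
Your proposal is correct and follows essentially the same route as the paper: a bias--variance split, the variance bound of Lemma~\ref{lem:variance_bound} rerun for the $\operatorname{cp}_h$-chain, and a bias split into the projection term (Assumption~\ref{ass:projection_stability}), the boundary-layer term (Lemma~\ref{lem:boundary_layer_error}) and the compensation term (Lemma~\ref{lem:compensation_error}), obtained after substituting the stopped representation \eqref{eq:n_step_representation} at the index $K$. The differences are only in rigor. You use the exact identity $\mathbb E|\bar u^h-u_\Omega(y_0)|^2=\operatorname{Var}(\bar u^h)+|\mathbb E S_h-u_\Omega(y_0)|^2$ instead of the factor-$2$ inequality, and your optional-stopping and dominated-convergence argument (using $W_k\le c_\ast^k$) justifies $\mathcal R=u_\Omega(y_0)$, which the paper simply asserts when it applies \eqref{eq:n_step_representation} with $n=K$.
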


\vspace{-10pt}
\begin{proof}
By the same argument as in Lemma \ref{lem:variance_bound}, the variance estimate remains valid for the estimator computed with $\operatorname{cp}_h$. Hence, together with Lemma \ref{lem:boundary_layer_error}, Lemma \ref{lem:compensation_error}, and Assumption \ref{ass:projection_stability}, we obtain
\begin{eqnarray*}
 &&\hspace{-12pt}\mathbb E\bigl[
|\bar{u}_{\Omega,N}^{h}(y_0)-u_\Omega(y_0)|^2
\bigr]=\mathbb E\bigl[
|\bar{u}_{\Omega,N}^{h}(y_0)-\mathbb E[\bar{u}_{\Omega,N}^{h}(y_0)]
+\mathbb E[\bar{u}_{\Omega,N}^{h}(y_0)]-u_\Omega(y_0)|^2
\bigr]
\\&&\hspace{10pt}\le 2\,\mathbb E\bigl[
|\bar{u}_{\Omega,N}^{h}(y_0)-\mathbb E[\bar{u}_{\Omega,N}^{h}(y_0)]|^2
\bigr]+
2\,|\mathbb E[\bar{u}_{\Omega,N}^{h}(y_0)]-u_\Omega(y_0)|^2
\\&&\hspace{10pt}=
2\,\operatorname{Var}\bigl(\bar{u}_{\Omega,N}^{h}(y_0)\bigr)
+
2D_1,
\end{eqnarray*}
where $D_1:=|\mathbb E[\bar{u}_{\Omega,N}^{h}(y_0)]-u_\Omega(y_0)|^2$.

It remains to estimate $D_1$. Moreover, by adding and subtracting $\mathbb E[\bar{u}_{\Omega,N}(y_0)]$, and using the definition of $\bar{u}_{\Omega,N}(y_0)$ and \eqref{eq:n_step_representation} with $n=K$, we have
\begin{eqnarray*}
&&D_1=\big|\mathbb E[\bar{u}_{\Omega,N}^{h}(y_0)]-u_\Omega(y_0)\big|^2\\
&&\le
\bigg\{\big|\mathbb E[\bar{u}_{\Omega,N}^{h}(y_0)]-\mathbb E[\bar{u}_{\Omega,N}(y_0)]\big|+
\big|
\mathbb E[\bar{u}_{\Omega,N}(y_0)]-u_\Omega(y_0)\big|\bigg\}^2\\
&&=
\bigg\{\Big|\mathbb E[S_h(y_0)]-\mathbb E[S(y_0)]\Big|+\bigg|
\mathbb E\!\big[
W_K\,g_\Omega\!\big(\operatorname{cp}_\Gamma(Y_K)\big)+
\mathbb E\!\bigg[
\sum_{k=0}^{K-1}
W_k\,\omega(Y_k)\big(f(Z_{k+1})+m_\ast(Z_{k+1})\big)
\bigg]
\bigg]\\
&&\hspace{12pt}-
\mathbb E\!\big[
W_K\,u_\Omega(Y_K)
\big]
-
\mathbb E\!\bigg[
\sum_{k=0}^{K-1}
W_k\,\omega(Y_k)\big(f(Z_{k+1})+m(Z_{k+1})\big)
\bigg]
\bigg|
\bigg\}^2\\
&&\le\bigg\{
\big|
\mathbb E[S_h(y_0)]
-
\mathbb E[S(y_0)]
\big|
+
\bigg|
\mathbb E\!\big[
W_K\,g_\Omega\!\big(\operatorname{cp}_\Gamma(Y_K)\big)
\big]
-
\mathbb E\!\big[
W_K\,u_\Omega(Y_K)
\big]
\bigg|\\
&&\hspace{12pt}
+
\bigg|
\mathbb E\!\big[
\sum_{k=0}^{K-1}
W_k\,\omega(Y_k)\big(m_\ast(Z_{k+1})-m(Z_{k+1})\big)
\big]
\bigg|
\bigg\}^2\\
&&\le
\Big(C_{cp}\,\eta_{\operatorname{cp}}+L_\Gamma\,\varepsilon+\frac{\eta_m}{\sigma(1-c_\ast)}\Big)^2.
\end{eqnarray*}

Combining the above estimates yields
\begin{equation*}
\begin{split}
\mathbb E\bigl[
|\bar{u}_{\Omega,N}^{h}(y_0)-u_\Omega(y_0)|^2
\bigr]
&\le
2\,\mathcal O(N^{-1})
+
2\Big(
C_{cp}\,\eta_{\operatorname{cp}}
+
L_\Gamma\,\varepsilon
+
\frac{\eta_m}{\sigma(1-c_\ast)}
\Big)^2\\
&\le C_{\varepsilon}N^{-1}
+
C_\Gamma\varepsilon^2
+
C_{\varepsilon}\eta_m^2
+
C_{cp}\,\eta_{\operatorname{cp}}^2.
\end{split}
\end{equation*}
This ends the proof.
\end{proof}

We next derive the corresponding error estimate in the closed manifold case.

\begin{theorem}\label{thm:total_error_boundaryless}
Assume that $\Omega$ is closed. Let
$\bar{u}_{\Omega,N}^{h}(y_0)$ be the numerical solution associated with closest-point map $\operatorname{cp}_h$, and let $u_\Omega(y_0)$ be the exact solution of \eqref{eq:surface_pde}. For any $\tau_W>0$, there exists a constant $M_u>0$ such that $|u_\Omega(y)|\le M_u$, for all $y\in\Omega$.
% let $u_\Omega$ satisfy $|u_\Omega(y)|\le M_u$ for all $y\in\Omega$. 
Under the Assumption \ref{ass:projection_stability}, we have
\begin{equation}\label{eq:total_error_boundaryless}
\mathbb E\bigl[
|\bar{u}_{\Omega,N}^{h}(y_0)-u_\Omega(y_0)|^2
\bigr]\lesssim N^{-1}+\tau_W^2+\eta_m^2+\eta_{\operatorname{cp}}^2.
\end{equation}
\end{theorem}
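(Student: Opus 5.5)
We mirror the argument of Theorem~\ref{thm:total_error_boundary}, replacing the boundary-layer error by a truncation error controlled through the stopping rule $W_K<\tau_W$. First we use the bias--variance splitting
\[
\mathbb E\bigl[|\bar{u}_{\Omega,N}^{h}(y_0)-u_\Omega(y_0)|^2\bigr]
\le 2\operatorname{Var}\bigl(\bar{u}_{\Omega,N}^{h}(y_0)\bigr)+2\big|\mathbb E[\bar{u}_{\Omega,N}^{h}(y_0)]-u_\Omega(y_0)\big|^2 .
\]
The variance term is handled exactly as in Lemma~\ref{lem:variance_bound} in the closed case ($M_g=0$, $\Omega_\varepsilon=\Omega$), with $\operatorname{cp}_h$ in place of $\operatorname{cp}$. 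This works because that proof only uses $|F_k|\le M_f+M_m$, $\omega\le 1/\sigma$ and $W_k\le c_\ast^k$ from Lemma~\ref{lem:uniform_screened_weight}. It gives a bound $C_1/N+C_2/(NN_V)\lesssim N^{-1}$.

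For the bias, we insert $\mathbb E[S(y_0)]$, the expectation computed with the exact map $\operatorname{cp}$, and bound $|\mathbb E[S_h(y_0)]-\mathbb E[S(y_0)]|\le C_{\operatorname{cp}}\eta_{\operatorname{cp}}$ by Assumption~\ref{ass:projection_stability}. It remains to compare $\mathbb E[S(y_0)]$ with $u_\Omega(y_0)$. For this we use the finite-step identity \eqref{eq:n_step_representation} evaluated at the stopping index $K$:
\[
u_\Omega(y_0)=\mathbb E\bigl[W_K u_\Omega(Y_K)\bigr]+\mathbb E\Big[\sum_{k=0}^{K-1}W_k\,\omega(Y_k)\bigl(f(Z_{k+1})+m(Z_{k+1})\bigr)\Big].
\]
Subtracting $S$ (whose mean involves $m_\ast$; since $F_k$ is an average of i.i.d.\ samples, its conditional mean matches the single-sample form) gives two differences.

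The first is a truncation term. By the stopping rule $W_K<\tau_W$ and the bound $|u_\Omega|\le M_u$, we get $|\mathbb E[W_K u_\Omega(Y_K)]|\le M_u\tau_W$.

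The second is a compensation term, which Lemma~\ref{lem:compensation_error} bounds by $\eta_m/(\sigma(1-c_\ast))$.

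Collecting everything,
\[
|\text{bias}|\le C_{\operatorname{cp}}\eta_{\operatorname{cp}}+M_u\tau_W+\frac{\eta_m}{\sigma(1-c_\ast)},
\]
and squaring with $(a+b+c)^2\le 3(a^2+b^2+c^2)$ yields \eqref{eq:total_error_boundaryless}.

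The main obstacle is justifying \eqref{eq:n_step_representation} at the random index $K$ rather than at a deterministic $N_s$. We would first check that $K$ is almost surely finite, in fact bounded. Since $W_k\le c_\ast^k$, we have $K\le \lceil \log\tau_W/\log c_\ast\rceil+1$ deterministically. Given this bound, optional stopping for the martingale-type identity
\[
W_k u_\Omega(Y_k)+\sum_{j<k}W_j\,\omega(Y_j)\bigl(f+m\bigr)(Z_{j+1})
\]
applies, because $K$ is a bounded stopping time for the filtration generated by $(Y_j,Z_j)$. One subtlety is that $W_K$ is determined by $Y_{K-1}$, so $K$ is indeed a stopping time. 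The other is that $m$ is bounded on the compact tube $U_\delta^\Omega$, which makes all terms integrable.
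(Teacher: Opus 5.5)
Your proposal is correct and follows essentially the same route as the paper. Both use the bias--variance splitting, the variance bound of Lemma~\ref{lem:variance_bound} with $M_g=0$, insertion of $\mathbb E[S(y_0)]$ together with Assumption~\ref{ass:projection_stability}, the identity \eqref{eq:n_step_representation} at the stopping index $K$, the truncation bound $M_u\tau_W$ from $W_K<\tau_W$, and Lemma~\ref{lem:compensation_error}. Your optional-stopping argument ($K$ is deterministically bounded because $W_k\le c_\ast^k$, and is a stopping time because $W_k$ depends only on $Y_0,\dots,Y_{k-1}$) supplies a justification the paper omits when it simply writes ``\eqref{eq:n_step_representation} with $n=K$''.
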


\begin{proof}
By the stopping rule in the closed manifold case, we have $W_K<\tau_W$. Hence,
\begin{equation}\label{tau_w_bound}
\left|
\mathbb E\!\left[
W_K\,u_\Omega(Y_K)
\right]
\right|\le
\mathbb E\!\left|
W_K\,u_\Omega(Y_K)
\right|
\le
\mathbb E\!\left[W_K\,|u_\Omega(Y_K)|\right]
\le
M_u\,\tau_W.
\end{equation}
By the same argument as in Lemma \ref{lem:variance_bound}, the variance estimate remains valid for the estimator computed with $\operatorname{cp}_h$. Hence, together with \eqref{tau_w_bound}, Lemma \ref{lem:compensation_error}, and Assumption \ref{ass:projection_stability}, we obtain
\begin{eqnarray*}
&&\mathbb E\bigl[
|\bar{u}_{\Omega,N}^{h}(y_0)-u_\Omega(y_0)|^2
\bigr]=\mathbb E\bigl[|\bar{u}_{\Omega,N}^{h}(y_0)-\mathbb E[\bar{u}_{\Omega,N}^{h}(y_0)]
+\mathbb E[\bar{u}_{\Omega,N}^{h}(y_0)]-u_\Omega(y_0)|^2
\bigr]\\
&&\hspace{18pt}\le
2\,\mathbb E\bigl[
|\bar{u}_{\Omega,N}^{h}(y_0)-\mathbb E[\bar{u}_{\Omega,N}^{h}(y_0)]|^2
\bigr]+
2\,|\mathbb E[\bar{u}_{\Omega,N}^{h}(y_0)]-u_\Omega(y_0)|^2\\
&&\hspace{18pt}=
2\,\operatorname{Var}\bigl(\bar{u}_{\Omega,N}^{h}(y_0)\bigr)
+
2D_2,
\end{eqnarray*}
where $D_2:=|\mathbb E[\bar{u}_{\Omega,N}^{h}(y_0)]-u_\Omega(y_0)|^2$. Moreover, by adding and subtracting $\mathbb E[\bar{u}_{\Omega,N}(y_0)]$, and using the definition of $\bar{u}_{\Omega,N}(y_0)$ and \eqref{eq:n_step_representation} with $n=K$, we have
\begin{eqnarray*}
&&D_2=\big|\mathbb E[\bar{u}_{\Omega,N}^{h}(y_0)]-u_\Omega(y_0)\big|^2\\
&&\hspace{15.5pt}\le\bigg\{\big|\mathbb E[\bar{u}_{\Omega,N}^{h}(y_0)]-\mathbb E[\bar{u}_{\Omega,N}(y_0)]\big|+\big|
\mathbb E[\bar{u}_{\Omega,N}(y_0)]-u_\Omega(y_0)\big|\bigg\}^2\\
&&\hspace{15.5pt}=\bigg\{\big|\mathbb E[S_h(y_0)]-\mathbb E[S(y_0)]\big|+\bigg|\mathbb E\!\bigg[\sum_{k=0}^{K-1}W_k\,\omega(Y_k)\big(f(Z_{k+1}+m_\ast(Z_{k+1})\big)\bigg]\\
&&\hspace{27.5pt}-\mathbb E\!\big[W_K\,u_\Omega(Y_K)\big]-\mathbb E\bigg[\sum_{k=0}^{K-1}W_k\,\omega(Y_k)\big(f(Z_{k+1})+m(Z_{k+1})\big)\bigg]\bigg|
\bigg\}^2\\
&&\hspace{15.5pt}\le\bigg\{\big|\mathbb E[S_h(y_0)]-\mathbb E[S(y_0)]\big|+\Big|\mathbb E\!\big[W_K\,u_\Omega(Y_K)\big]\Big|\\
&&\hspace{27.5pt}+\bigg|\mathbb E\bigg[\sum_{k=0}^{K-1}W_k\,\omega(Y_k)\big(m_\ast(Z_{k+1})-m(Z_{k+1})\big)\bigg]\bigg|\bigg\}^2\\
&&\hspace{15.5pt}\le\Big(C_{cp}\,\eta_{\operatorname{cp}}+M_u\,\tau_W+\frac{\eta_m}{\sigma(1-c_\ast)}\Big)^2.
\end{eqnarray*}
Therefore, combining the above estimates yields
\begin{equation*}
\begin{split}
\mathbb E\bigl[
|\bar{u}_{\Omega,N}^{h}(y_0)-u_\Omega(y_0)|^2
\bigr]
&\le
2\,\mathcal O(N^{-1})
+
2\Big(C_{cp}\,\eta_{\operatorname{cp}}+M_u\,\tau_W+\frac{\eta_m}{\sigma(1-c_\ast)}
\Big)^2\\
&\le
\mathcal O\!\big(
N^{-1}+\tau_W^2+\eta_m^2+\eta_{\operatorname{cp}}^2
\big).
\end{split}
\end{equation*}
This ends the proof.
\end{proof}

% \begin{remark}\label{rem:projection_error_path}
% \itshape
% The above error estimates are derived under the assumption that the closest-point map $\operatorname{cp}$ in the projected chain is evaluated exactly. In practice, however, the recursion step $Y_{k+1}=\operatorname{cp}(\Theta_{k+1})$ is replaced by a numerical projection $Y_{k+1}^h=\operatorname{cp}_h(\Theta_{k+1})$, where $\operatorname{cp}_h$ denotes an approximate closest-point solver. Define
% \[
% \eta_{\operatorname{cp}}
% :=
% \sup_{x\in U_\delta^\Omega}
% |\operatorname{cp}_h(x)-\operatorname{cp}(x)|.
% \]
% Then the path construction introduces an additional projection error. Under suitable regularity assumptions on $u_\Omega$, $f_\Omega$, and, in the case of manifolds with boundary, $g_\Omega$, this contribution is expected to be of order $O(\eta_{\operatorname{cp}})$ in bias, and hence of order $O(\eta_{\operatorname{cp}}^2)$ in the mean-square error. Therefore, if the numerical closest-point map is used, the final estimates in Theorems \ref{thm:total_error_boundary} and \ref{thm:total_error_boundaryless} should be supplemented by an additional projection term.
% \end{remark}

\section{Numerical Experiments}

In this section, several numerical experiments are performed to demonstrate the accuracy and efficiency of the proposed projected Walk on Spheres method for solving \eqref{eq:surface_pde} on embedded manifolds. %All experiments were performed on a personal laptop (Lenovo Legion Y9000P, model 82WK), equipped with an Intel(R) Core(TM) i9-13900HX processor (24 cores, 32 threads, base frequency 2.2 GHz), 16 GB RAM, running Microsoft Windows 11 Home (version 10.0.26100, build 26100), BIOS version KWCN38WW (UEFI mode). 
All numerical experiments were performed on a personal laptop equipped with an Intel Core i9-13900HX processor and 16 GB RAM running Windows 11, and all codes were implemented in MATLAB R2023a with parallel computing enabled.

In our computations, we use the following root mean square error to measure the numerical accuracy:
\begin{equation}\label{eq:rmse_section}
\mathrm{Error}
=
\bigg(
\frac{1}{N_s}
\sum_{p=1}^{N_s}
\bigl(u(x_p)-u_N(x_p)\bigr)^2
\bigg)^{1/2},
\end{equation}
where $u(x_p)$ and $u_N(x_p)$ denote the exact and numerical solutions at point $x_p$, respectively, and $\{x_p\}_{p=1}^{N_s}$ denotes the set of random points on the manifold $\Omega$. In what follows, we let $N_s=100$ for all experiments and choose the boundary layer $\varepsilon=10^{-5}$ in the case of manifolds with boundary. The number of samples per path is set to $N_V=50$ for all examples, and the truncation threshold is set to $\tau_W=10^{-8}$ in the closed manifold case.

\vspace{2pt}

\textbf{Example 1} {\bf (Effect of compensation term on a torus with a planar cut).}
In this example, we investigate the effect of the compensation term on the accuracy of the proposed projected Walk on Spheres method. We consider a torus embedded in $\mathbb{R}^3$, given by the parametrization
\[
X(\theta,\phi)
=
\bigl(
(R+a\cos\theta)\cos\phi,\,
(R+a\cos\theta)\sin\phi,\,
a\sin\theta
\bigr),
\quad
\theta,\phi\in[0,2\pi),
\]
with $R=1$ and $a=0.45$. To introduce a boundary, we cut the torus by the plane $-\sin(\phi_c)x+\cos(\phi_c)y=0$ with $\phi_c=0.35\pi$, so that the computational domain is given by $\phi-\phi_c\in(0,\pi)$.

We consider the screened Poisson equation \eqref{eq:surface_pde} with $\sigma=4$, and the exact solution is set as
\[
u_\Omega(\theta,\phi)
=
\sin(s)\,A(\theta)
+
\sin(2s)\,B(\theta)
+
\sin(3s)\,C(\theta),
\quad
s=\phi-\phi_c,
\]
where
\[
\begin{aligned}
A(\theta)&=1+0.90\cos\theta+0.60\sin(2\theta)+0.35\cos(3\theta),\\
B(\theta)&=0.45\cos(2\theta),\;\;\;C(\theta)=0.30\bigl(0.5+\sin\theta\bigr).
\end{aligned}
\]
The source term $f_\Omega$ is computed analytically via the Laplace--Beltrami operator, and the solution satisfies homogeneous Dirichlet boundary conditions $u_\Omega=0$ on $\partial\Omega$ by construction.
%and the source term $f_\Omega$ can be computed analytically from the Laplace--Beltrami operator on the manifold. By construction, the exact solution satisfies $u_\Omega=0$ on $\partial\Omega$, and therefore the problem is equipped with homogeneous Dirichlet boundary conditions.

\begin{wrapfigure}{r}{0.50\textwidth}
\vspace{-16pt} 
\centering
\includegraphics[width=0.46\textwidth]{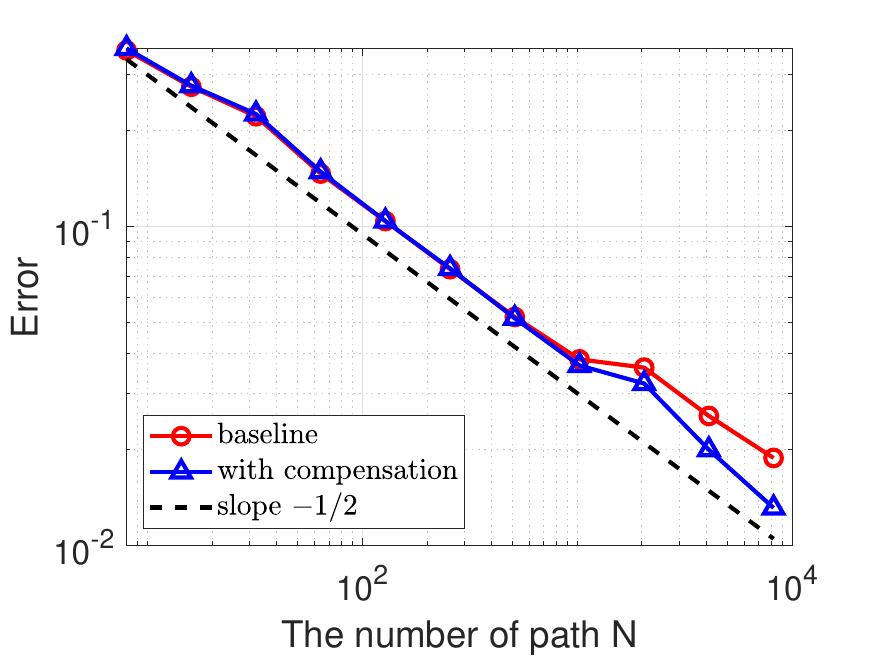}
\vspace{2pt}\caption{RMSE comparison between the baseline method and the compensated method on the torus with a planar cut.}
\label{fig:torus_comp}
\vspace{-2pt}
\end{wrapfigure}
% \begin{wrapfigure}{r}{0.50\textwidth}
% \vspace{-18pt}
% \makebox[\linewidth][r]{\includegraphics[width=0.96\linewidth]{figure/torus_compare_m.pdf}}
% \refstepcounter{figure}\label{fig:torus_comp}
% \makebox[\linewidth][r]{\parbox{\linewidth}{\raggedright\small
% \vspace{-3pt} \textsc{Figure}~\thefigure. RMSE comparison between\\
% the baseline method and the compensated\\
% method on the torus with a planar cut.}}

% \end{wrapfigure}

%\begin{wrapfigure}{r}{0.6\textwidth}
%\vspace{-16pt}
%\hspace*{5mm}
%\centering
%\includegraphics[width=0.80\linewidth]{figure/torus_compare_m.pdf}
%\hspace*{5mm}\caption{RMSE comparison between the baseline method and the compensated method on the torus with a planar cut.}
%\label{fig:torus_comp}
%\vspace{-2pt}
%\end{wrapfigure}

To evaluate the effect of the compensation term, we compare the baseline method (i.e., $m_\ast\equiv 0$) with a second-order compensated version in which $m_\ast$ is constructed numerically from a coarse Monte Carlo approximation.
Fig.~\ref{fig:torus_comp} shows that the baseline method follows the expected $\mathcal{O}(N^{-1/2})$ convergence for moderate $N$, but eventually saturates due to the bias. In contrast, the compensated method maintains the $\mathcal{O}(N^{-1/2})$ decay throughout, demonstrating the effectiveness of the second-order correction.

To further illustrate the impact of the compensation term, we examine the spatial distribution of the numerical solution and the corresponding pointwise errors obtained with $N=2000$.
\begin{figure}[ht!]
\centering
\includegraphics[width=0.36\textwidth,trim=50 70 5 10,clip]{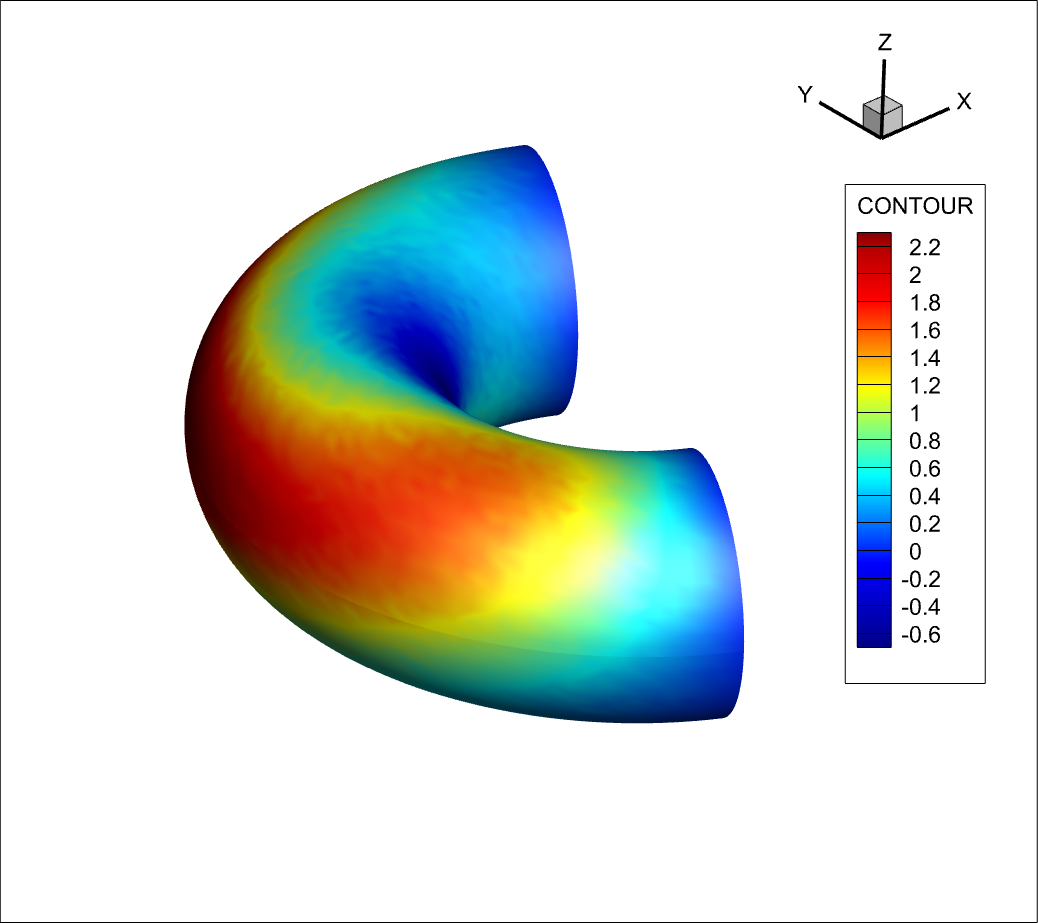}
\includegraphics[width=0.36\textwidth,trim=50 70 5 10,clip]{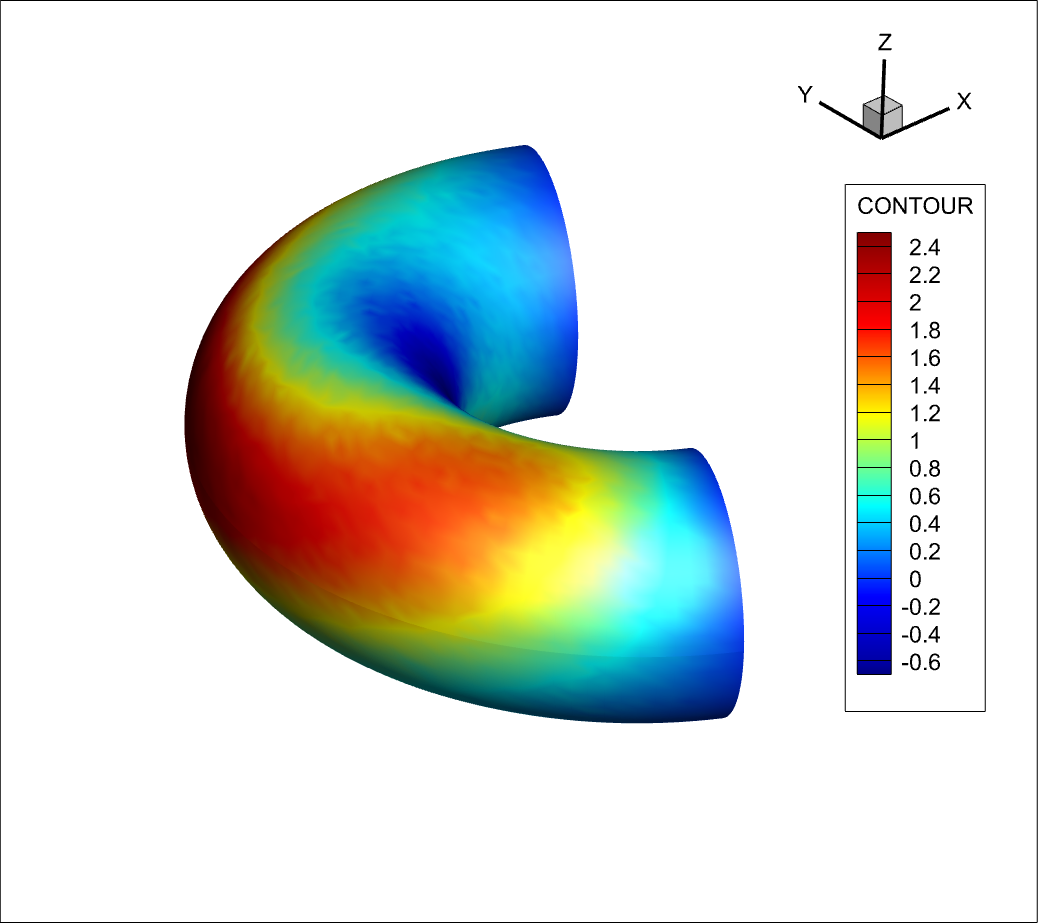}

\vspace{-16pt}
\includegraphics[width=0.36\textwidth,trim=50 70 5 10,clip]{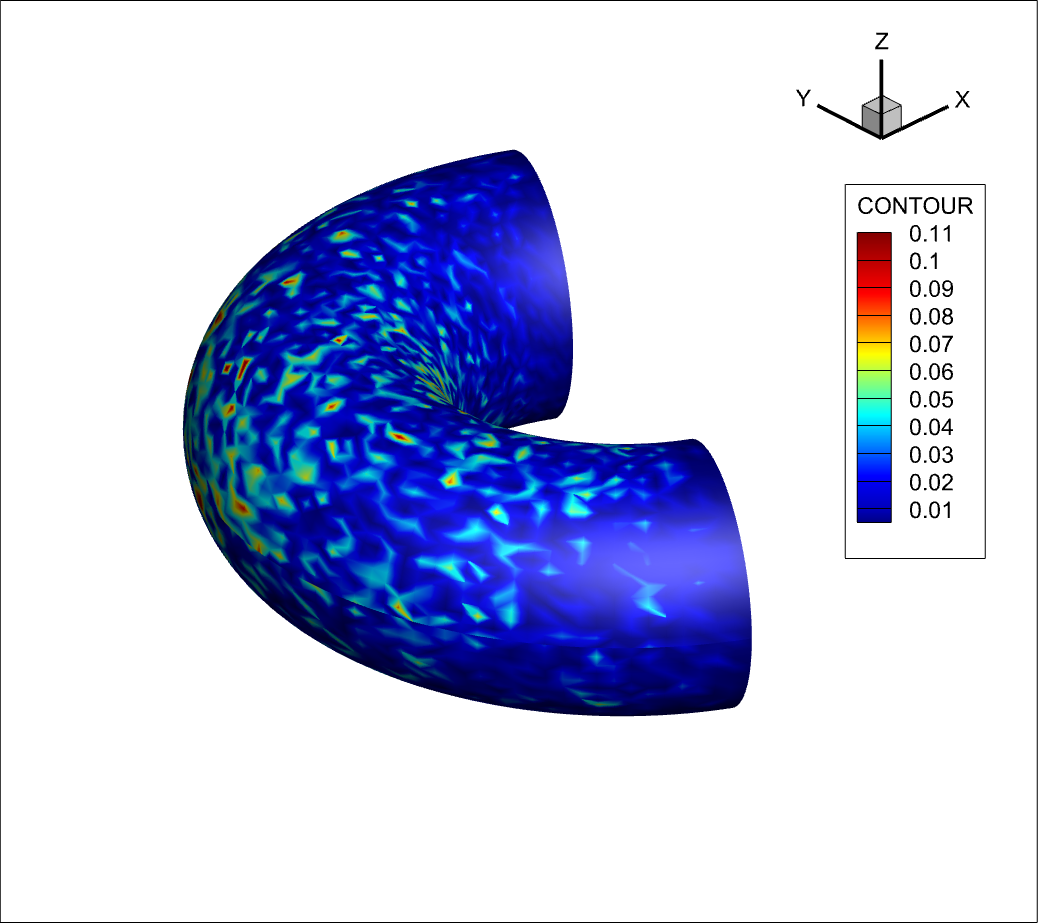}
\includegraphics[width=0.36\textwidth,trim=50 70 5 10,clip]{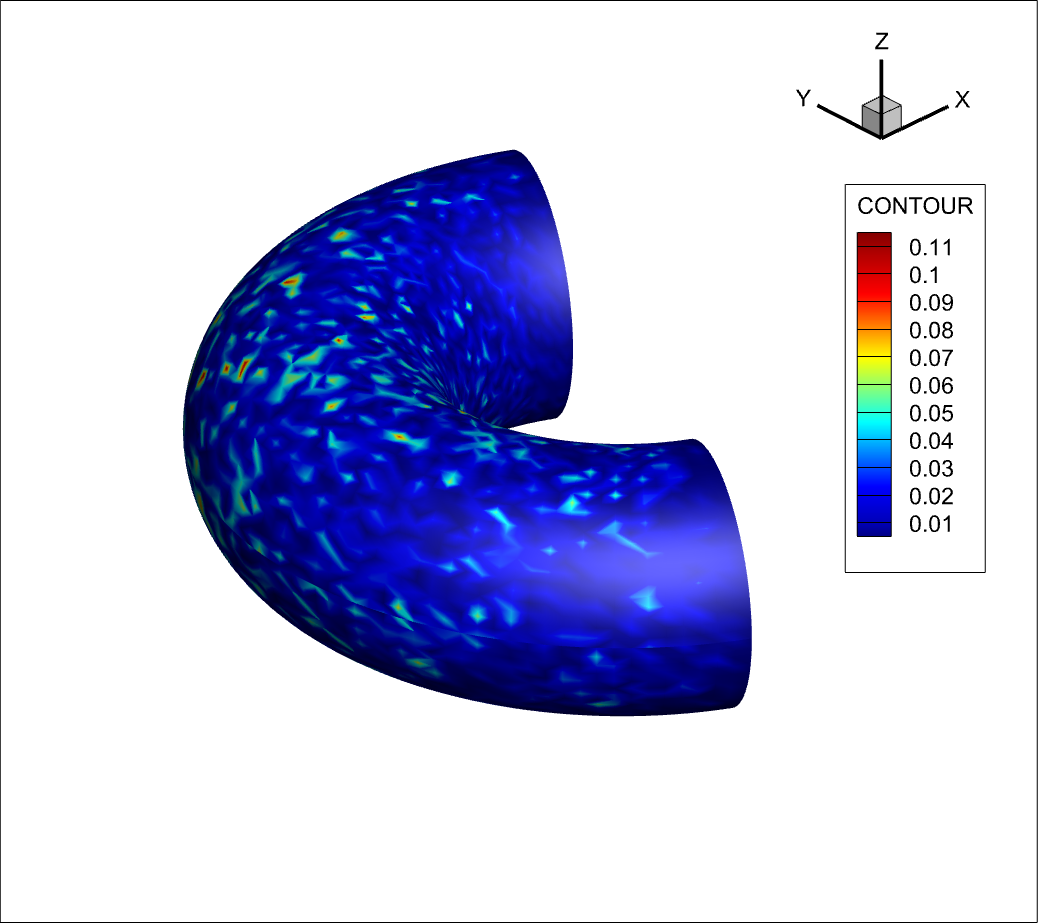}

\caption{Top: numerical solutions without (left) and with (right) compensation. 
Bottom: absolute errors without (left) and with (right) compensation.}
\label{fig:torus_solution_error}
\end{figure}
As shown in Fig.~\ref{fig:torus_solution_error}, the two numerical solutions are nearly indistinguishable and both accurately capture the solution. However, a clear difference is observed in the error distributions. Without compensation, the error exhibits a pronounced spatial pattern, with larger deviations appearing away from the boundary. In contrast, the compensated method yields a substantially smaller error and a more uniform distribution, indicating that the second-order correction effectively reduces the bias. %This example indicates that the compensation term reduces the operator mismatch error at the cost of additional normal-direction sampling.  We therefore use the baseline method with $m_\ast\equiv0$ in the remaining examples to reduce the computational cost while retaining the mesh-free nature of the method.

%Example 1 shows that the compensation term mainly reduces the bias caused by the mismatch between the ambient operator and the intrinsic surface operator, at the expense of additional computational cost due to extra sampling in the normal direction. In the following examples, we therefore focus on the baseline method with $m_\ast \equiv 0$. 
%% This choice is motivated by the fact that, for sufficiently small local ball radii, the mismatch error is negligible relative to the stochastic error, in agreement with the discussion in Remark~\ref{rem:m0}. 
% As discussed in Section~\ref{sec:M}, this choice avoids the construction of an auxiliary predictor and the reconstruction of normal derivatives. For sufficiently small local ball radii, the off-manifold mismatch error is expected to remain negligible relative to the error.
%Omitting $m_\ast$ therefore reduces the computational cost per path while preserving the simplicity and mesh-free character of the method.

\vspace{2pt}

\textbf{Example 2} {\bf (Accuracy test on a 1000 dimensional implicit dumbbell hypersurface).}
In this example, we test the accuracy of the proposed projected Walk on Spheres method on a high dimensional closed dumbbell-type hypersurface given in implicit form. Let $x=(\hat x,x_\ast)\in\mathbb{R}^{1001},$ $\hat x\in\mathbb{R}^{1000},$ $x_\ast\in\mathbb{R},$
and define the manifold by
\[
\mathcal M_{1000}=\{x\in\mathbb R^{1001}:\Phi(x)=0\},
\qquad
\Phi(x)=\|\hat x\|_2-R(x_\ast),
\]
where
\[
R(x_\ast)=\sqrt{1-(x_\ast/\kappa)^2}\,
\bigl(1-\alpha+2\alpha(x_\ast/\kappa)^2\bigr),
\qquad
\alpha=0.35,\ \kappa=1.20.
\]
Geometrically, $\mathcal M_{1000}$ is a 1000 dimensional dumbbell hypersurface embedded in $\mathbb R^{1001}$.

We solve \eqref{eq:surface_pde} on the closed manifold $\Omega=\mathcal M_{1000}$ with $\sigma=2d$ and the exact solution is chosen as
\[
u_\Omega(x)
=
0.15\,x_\ast
+
0.05\,x_\ast^2
+
\beta_1\,v_1^\top \hat x
+
\beta_2\,\hat x^\top A_2 \hat x,
\quad x\in\mathcal M_{1000},
\]
where $\beta_1=1$, $\beta_2=0.20$, and
\[
v_1=\frac{1}{\sqrt {1000}}(1,1,\dots,1)^\top\in\mathbb R^{1000},
\qquad
A_2=\operatorname{diag}(a_1,\dots,a_{1000}),
\qquad
\sum_{j=1}^{1000} a_j=0.
\]
The source term $f_\Omega$ is computed analytically from the Laplace--Beltrami operator on the manifold.

\begin{wrapfigure}{r}{0.50\textwidth}
\vspace{-16pt} 
\centering
\includegraphics[width=0.46\textwidth]{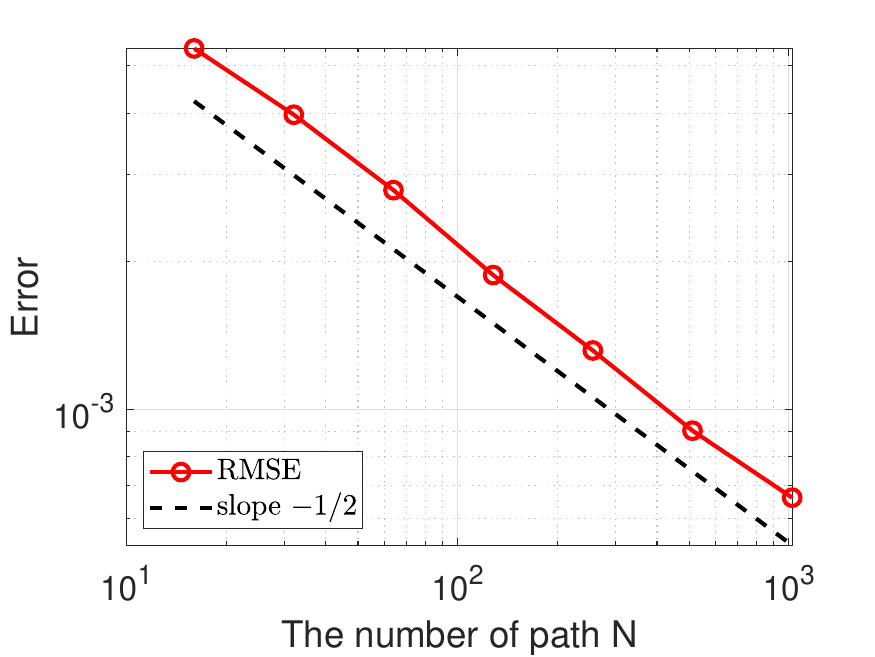}
\vspace{2pt}\caption{Numerical errors against the number of paths on the 1000-dimensional implicit dumbbell hypersurface.}
\label{fig:dumbbell_comp}
\vspace{-2pt}
\end{wrapfigure}

Since $\Omega$ is closed, the projected chain has no boundary stopping term and is terminated when the screened weight becomes sufficiently small. For simplicity, we set the compensation term to zero. The local sampling radius is chosen according to \eqref{eq:radius_rule_unified} and \eqref{eq:rho_geo_curvature}; in this example, we take $\theta=0.95$ and $c_\kappa=1.20$. For a point $x\in\mathbb R^{1001}$ near $\mathcal M_{1000}$, the closest-point projection is computed by a local Newton iteration in the profile parameter. To improve numerical stability in this high dimensional example, the local screened factor is evaluated by its Bessel-free series representation, and the local source contribution is approximated by a small-ball average.
As shown in Fig.~\ref{fig:dumbbell_comp}, the numerical error decreases as the number of paths increases and follows the reference slope $-1/2$ overall. This indicates that the MPWoS achieves the expected convergence rate and remains effective for high dimensional embedded manifolds.

% \begin{wrapfigure}{r}{0.50\textwidth}
% \vspace{-20pt}
% \makebox[\linewidth][r]{\includegraphics[width=0.9\linewidth]{figure/dumbbell_1000d.pdf}}
% \refstepcounter{figure}\label{fig:dumbbell_comp}
% \vspace{4pt}
% \makebox[\linewidth][r]{\parbox{\linewidth}{\raggedright\small
% \textsc{Figure}~\thefigure. Numerical errors against the\\
% number of paths on the 1000-dimensional\\
% implicit dumbbell hypersurface.}}
% \vspace{-10pt}
% \end{wrapfigure}

Table~\ref{tab:dumbbell_runtime} reports representative runtime statistics for the 1000-dimensional closed dumbbell hypersurface. The computation is carried out for single point evaluation, and the reported running time is averaged over 20 randomly selected test points. In all cases, the number of Green-density samples per step is fixed at $N_V=50$, and 24 CPU cores are used in the parallel implementation. The average number of recursion steps is nearly unchanged as $N$ varies, since it is primarily determined by the screened weight tolerance, the radius rule, and the geometry. The total running time therefore mainly reflects the number of independent Monte Carlo paths. The runtime also depends on the implementation, the available parallel resources, and the cost of the closest-point projection. In particular, manifolds with simpler projection formulas generally lead to a lower computational cost per path.

%\begin{table}[htbp]
%\centering
%\caption{Runtime statistics on the closed 1000-dimensional dumbbell.}
%\label{tab:dumbbell_runtime}
%\begin{tabular}{ccccc}
%\hline
%$N$ & $N_V$ & Average steps & Total time (s) & Time per path (ms) \\
%\hline
%100  & 50 & 203 & 1.65  & 16.50 \\
%500  & 50 & 205 & 5.70 & 11.40 \\
%1000 & 50 & 204 & 11.43 & 11.43 \\
%\hline
%\end{tabular}
%\end{table}
 \begin{table}[htbp]
\centering
\caption{Runtime statistics on the closed 1000-dimensional dumbbell.}
\label{tab:dumbbell_runtime}
\renewcommand{\arraystretch}{1.1}
\setlength{\tabcolsep}{11pt}
\begin{tabular}{ccccc}
\hline
$N$ & $N_V$ & Average steps & Total time (s) & Time per path (ms) \\
\hline
100  & 50 & 203 & 1.65  & 16.50 \\
500  & 50 & 205 & 5.70  & 11.40 \\
1000 & 50 & 204 & 11.43 & 11.43 \\
\hline
\end{tabular}
\end{table}

\textbf{Example 3} {\bf (Accuracy test on 3D manifold in $\mathbb R^9$).}
In this example, we test the accuracy of the proposed projected Walk on Spheres method on the closed manifold
\[
\mathcal M=S^1_{\mathrm{Fourier}}(R)\times S^2(a)\subset\mathbb R^9,
\quad\text{with}\; R=1,\ a=0.65,
\]
where $S^1_{\mathrm{Fourier}}(R)$ denotes the Fourier embedding of a circle of scale $R$ into $\mathbb R^6$, defined by
\[
C(\phi)=
\bigl(
\cos\phi,\,
\sin\phi,\,
\tfrac12\cos 2\phi,\,
\tfrac12\sin 2\phi,\,
\tfrac13\cos 3\phi,\,
\tfrac13\sin 3\phi
\bigr),
\quad \phi\in[0,2\pi),
\]
and $S^2(a)$ is the sphere of radius $a$ in $\mathbb R^3$. The manifold is parameterized by
$$X(\phi,\omega)=\begin{bmatrix}
R\,C(\phi)\\
a\,\omega
\end{bmatrix},\quad \omega\in S^2.
$$

%We solve \eqref{eq:surface_pde} on $\Omega=\mathcal M$, with give the exact solution
%$$u_\Omega(\phi,\omega)=\cos\phi\,\omega_1+0.30\sin(2\phi)\,\omega_2+0.20\cos(3\phi)\,(\omega_1^2-\omega_2^2),$$
%and the corresponding source term is computed exactly from the Laplace--Beltrami operator.

We solve \eqref{eq:surface_pde} on $\Omega=\mathcal M$ with exact solution $$u_\Omega(\phi,\omega)=\cos\phi\,\omega_1+0.30\sin(2\phi)\,\omega_2+0.20\cos(3\phi)\,(\omega_1^2-\omega_2^2),$$ the source term is computed exactly via the Laplace--Beltrami operator, and the compensation term is set to zero. 
For the closest-point projection, we treat the two factors separately. The first six coordinates are projected onto the Fourier curve by a local Newton iteration, while the last three coordinates are projected onto $S^2(a)$ by normalization.
As shown in Fig.~\ref{fig:S1Fourier_S2_embedded_R9}, the numerical error decreases steadily with respect to the number of paths, and the overall decay is consistent with the reference slope $-1/2$. This shows that the proposed projected Walk on Spheres method remains effective for low-dimensional manifolds embedded in high-dimensional spaces and exhibits the expected Monte Carlo convergence behavior.
%is given analytically by
%\[
%f_\Omega=
%\begin{bmatrix}
%\sigma+\dfrac{1}{3R^2}+\dfrac{2}{a^2}\\[6pt]
%\sigma+\dfrac{4}{3R^2}+\dfrac{2}{a^2}\\[6pt]
%\sigma+\dfrac{9}{3R^2}+\dfrac{6}{a^2}
%\end{bmatrix}
%^{\!\top}
%\begin{bmatrix}
%\cos\phi\,\omega_1\\[4pt]
%0.30\sin(2\phi)\,\omega_2\\[4pt]
%0.20\cos(3\phi)\,(\omega_1^2-\omega_2^2)
%\end{bmatrix}.
%\]

% \begin{wrapfigure}{r}{0.50\textwidth}
% \vspace{-22pt}
% \makebox[\linewidth][r]{\includegraphics[width=0.96\linewidth]{figure/S1Fourier_S2_embedded_R9.pdf}}
% \refstepcounter{figure}\label{fig:S1Fourier_S2_embedded_R9}
% \vspace{4pt}
% \makebox[\linewidth][r]{\parbox{\linewidth}{\raggedright\small
% \textsc{Figure}~\thefigure. Numerical errors against the number of paths.}}
% \vspace{-10pt}
% \end{wrapfigure}

\begin{wrapfigure}{r}{0.50\textwidth}
\vspace{-14pt} 
\centering
\includegraphics[width=0.46\textwidth]{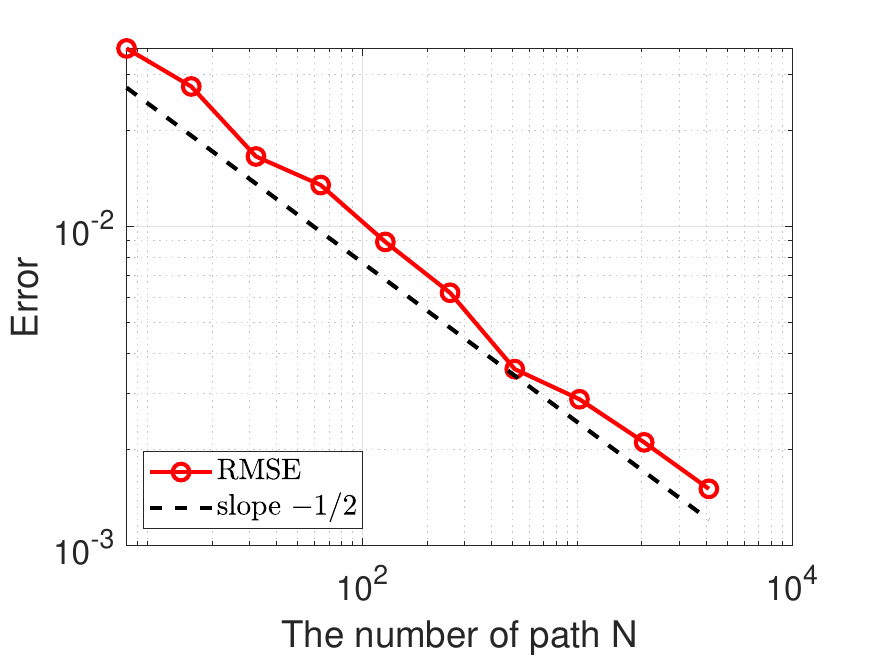}
\vspace{2pt}\caption{Numerical errors against the number of paths.}
\label{fig:S1Fourier_S2_embedded_R9}
\vspace{-2pt}
\end{wrapfigure}

%Again, we set the compensation term to zero in this example.  

To further illustrate the accuracy of the proposed method, we compare the exact solution, numerical solution, and absolute error via projection onto the $(x_1,x_2,x_7)$ coordinates with $N=500$. As shown in Fig.~\ref{fig:S1Fourier_S2_projection}, the numerical solution reproduces the main distribution of the exact solution well in the projected $(x_1,x_2,x_7)$ coordinates. In particular, the locations of high and low value regions are consistent in both plots. Moreover, the absolute error remains small throughout the domain, without noticeable concentration or artificial oscillations, indicating that the proposed method provides an accurate and stable approximation for this manifold embedded in $\mathbb{R}^9$.

\begin{figure}[ht!]
\hspace*{-0.3cm}
\includegraphics[width=0.32\textwidth,trim=20 50 20 20,clip]{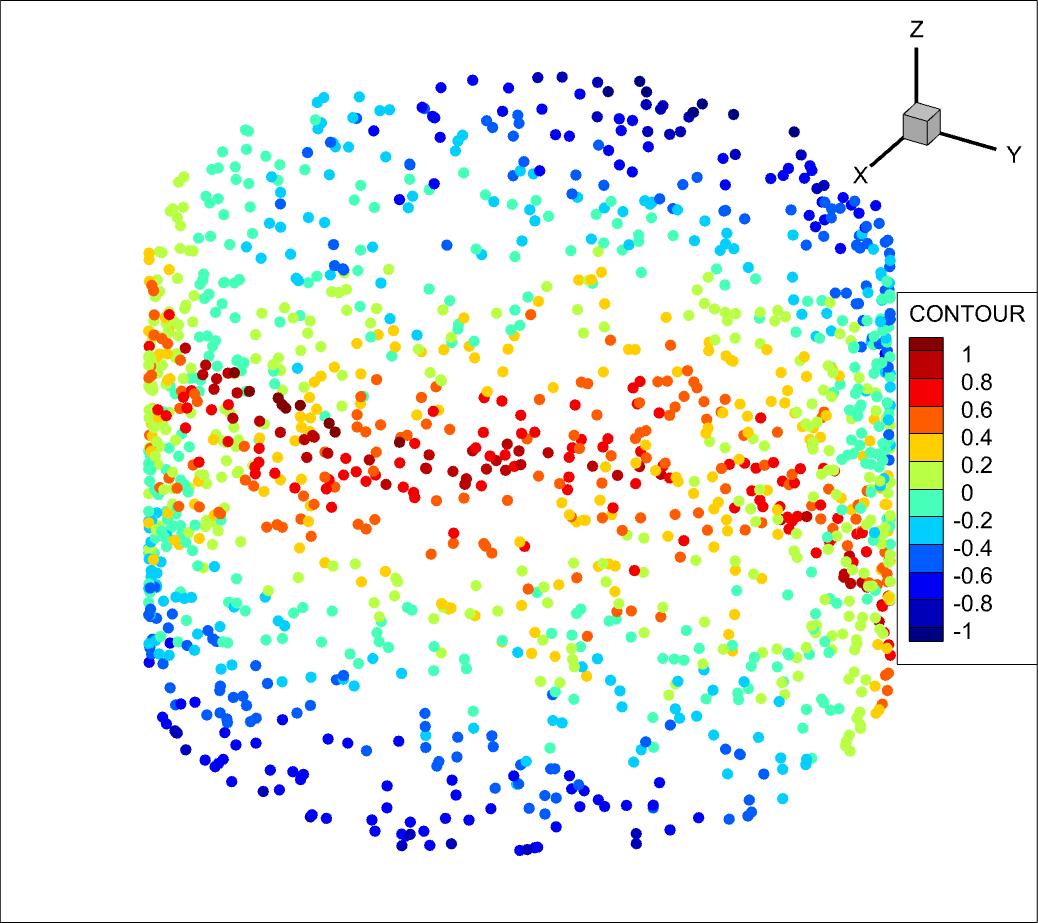}
\includegraphics[width=0.32\textwidth,trim=20 50 20 20,clip]{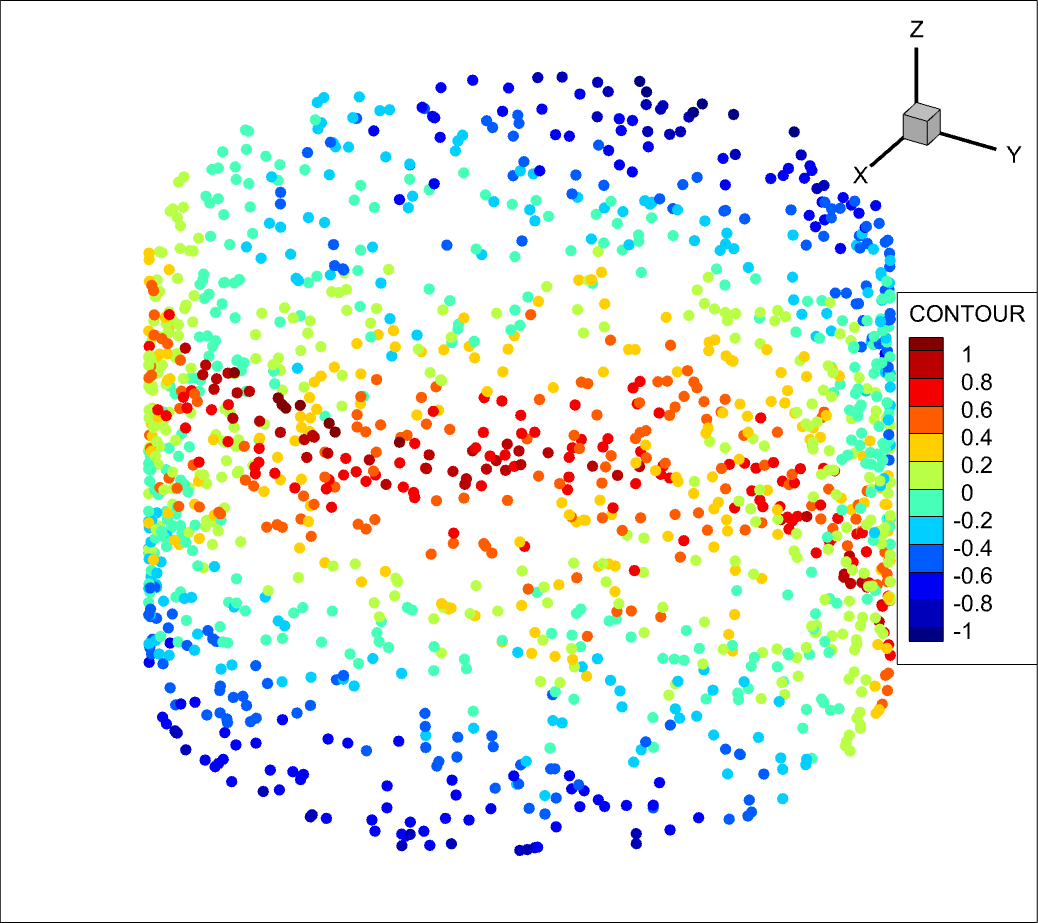}
\includegraphics[width=0.32\textwidth,trim=20 50 20 20,clip]{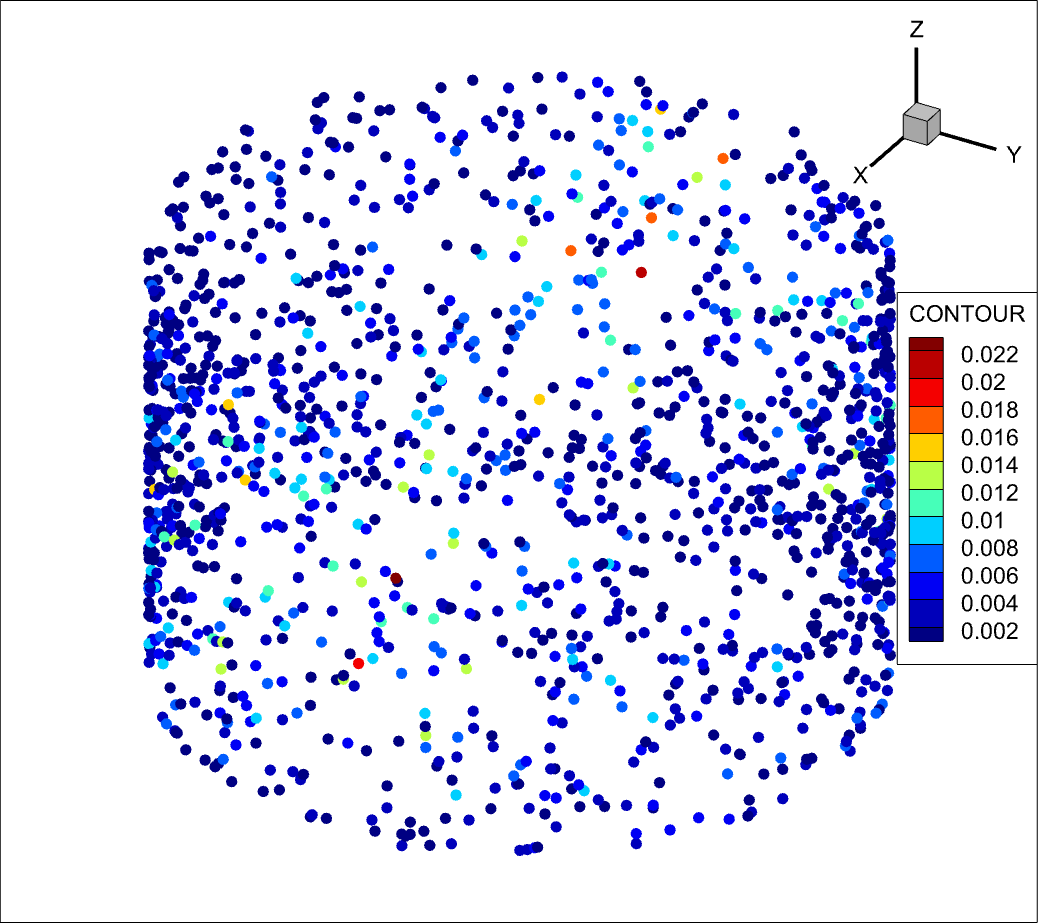}
\caption{Exact solution (left), numerical solution (middle), and absolute error (right) in the $(x_1,x_2,x_7)$ projection.}
\label{fig:S1Fourier_S2_projection}
\end{figure}

\textbf{Example 4} {\bf (Accuracy test on a closed sphere represented by a point cloud).}
In this example, we test the accuracy of the proposed projected Walk on Spheres method on the unit sphere $\mathcal M=S^2\subset\mathbb R^3$, where the manifold is represented only by a uniformly distributed discrete point cloud. More precisely, we take $N_{\mathrm{cloud}}=8000$ points on $S^2$, and solve \eqref{eq:surface_pde} on the closed manifold $\Omega=\mathcal M$ with the following exact solution
$$u_\Omega(x)=P_1(x_3)+0.20\,P_3(x_3)+0.10\,P_5(x_3), \quad x=(x_1,x_2,x_3)\in S^2,$$
where
$$P_1(z)=z,\quad P_3(z)=\frac12(5z^3-3z),\quad P_5(z)=\frac18(63z^5-70z^3+15z)$$
are the Legendre polynomials of degrees $1,3,5$, and the corresponding source term is given by
\[
f_\Omega(x)
=
(\sigma+2)\,P_1(x_3)
+
0.20\,(\sigma+12)\,P_3(x_3)
+
0.10\,(\sigma+30)\,P_5(x_3).
\]

% \begin{wrapfigure}{r}{0.50\textwidth}
% \vspace{-12pt}
% \makebox[\linewidth][r]{\includegraphics[width=0.96\linewidth]{figure/sphere_pointcloud.pdf}}
% \refstepcounter{figure}\label{fig:sphere_pointcloud}
% \vspace{4pt}
% \makebox[\linewidth][r]{\parbox{\linewidth}{\raggedright\small
% \textsc{Figure}~\thefigure. Numerical errors against the number
% of paths for a closed sphere repr- esented
% by a point cloud.}}
% \vspace{-10pt}
% \end{wrapfigure}

\begin{wrapfigure}{r}{0.50\textwidth}
\vspace{-18pt} 
\centering
\includegraphics[width=0.46\textwidth]{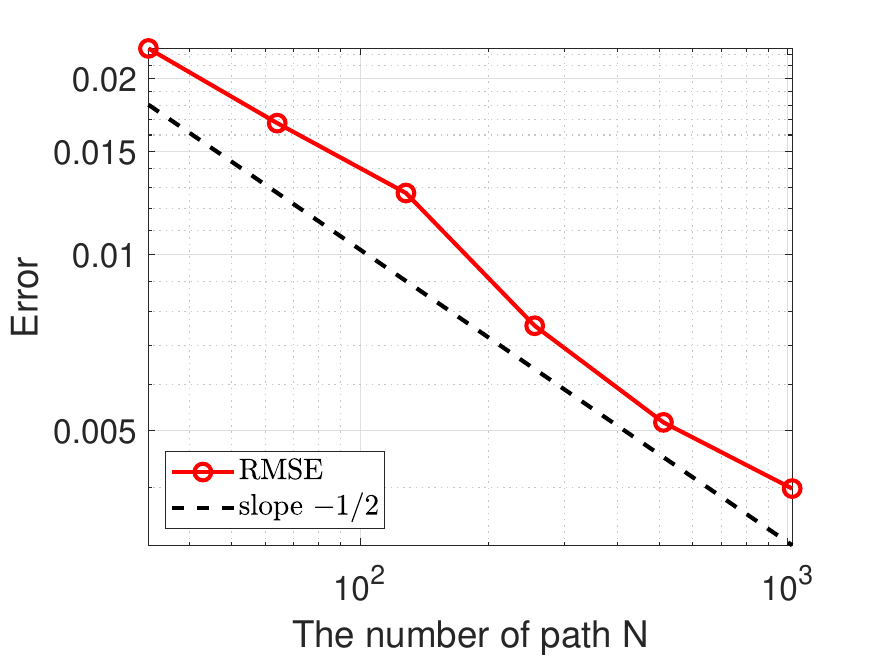}
\vspace{2pt}\caption{Numerical errors against the number of paths for a closed sphere represented by a point cloud.}
\label{fig:sphere_pointcloud}
\vspace{-2pt}
\end{wrapfigure}

The local radius is chosen according to the point cloud radius rule introduced in \eqref{eq:rho_geo_spacing}, where the local neighborhood size $h(y)$ is estimated by the average distance from $y$ to its $k_{\mathrm{nn}}$ nearest neighbors. In this example, we take $k_{\mathrm{nn}}=6$ and $c_h=2.4$, and set the compensation term to zero. The closest-point projection is approximated by nearest neighbor search on the point cloud via a KD tree.
As shown in Fig.~\ref{fig:sphere_pointcloud}, the numerical error decreases steadily as the number of paths increases, and the overall decay is consistent with the reference slope $-1/2$. This indicates that the proposed projected Walk on Spheres method still achieves the expected Monte Carlo convergence rate when the manifold is available only through discrete point-cloud data.

To further illustrate the point-cloud setting, we compare the exact solution, the numerical solution, and the absolute error on the sphere with $N=2000$. As shown in Fig.~\ref{fig:sphere_pointcloud_solution}, the numerical solution agrees closely with the exact solution and accurately captures the overall solution pattern. Moreover, the error remains uniformly small over the point cloud without noticeable concentration or spurious oscillation. These results demonstrate that the proposed MPWoS method remains effective when the manifold is represented only by discrete point-cloud data.

\begin{figure}[htbp]
\hspace*{-0.5cm}
\includegraphics[width=0.32\textwidth,trim=20 50 20 20,clip]{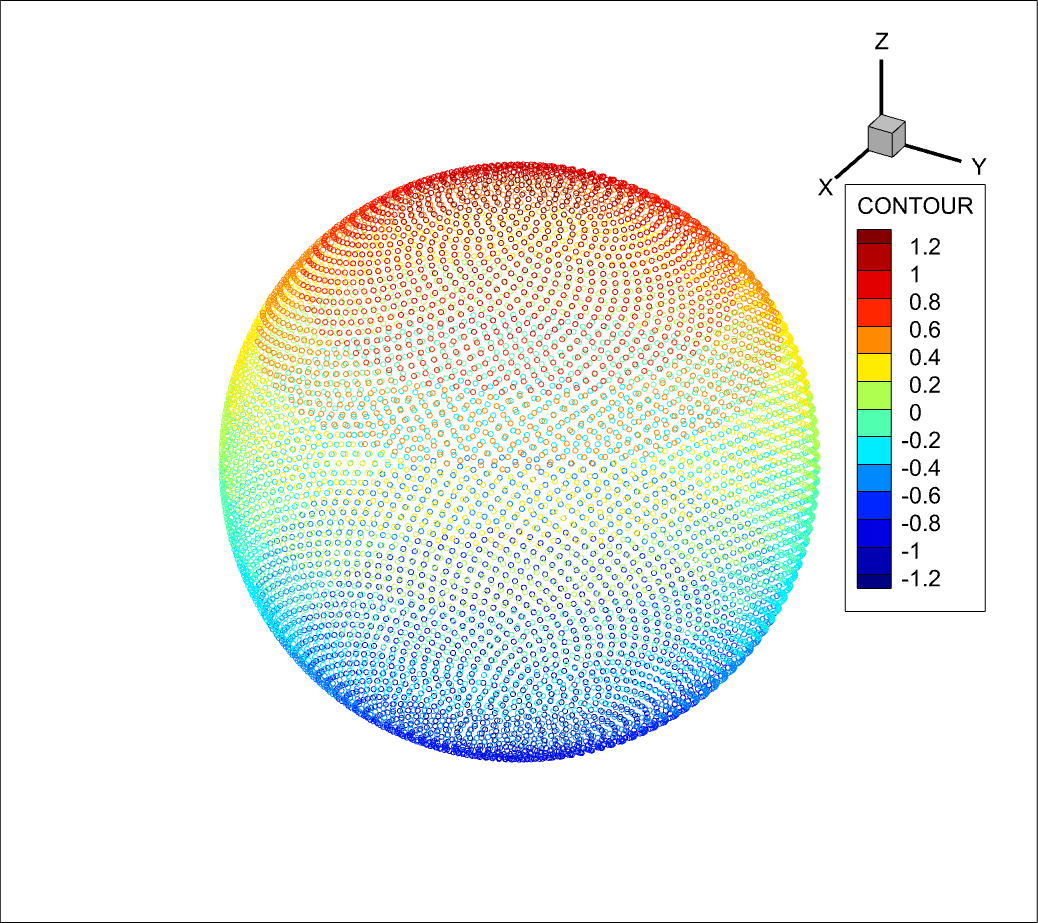}
\includegraphics[width=0.32\textwidth,trim=20 50 20 20,clip]{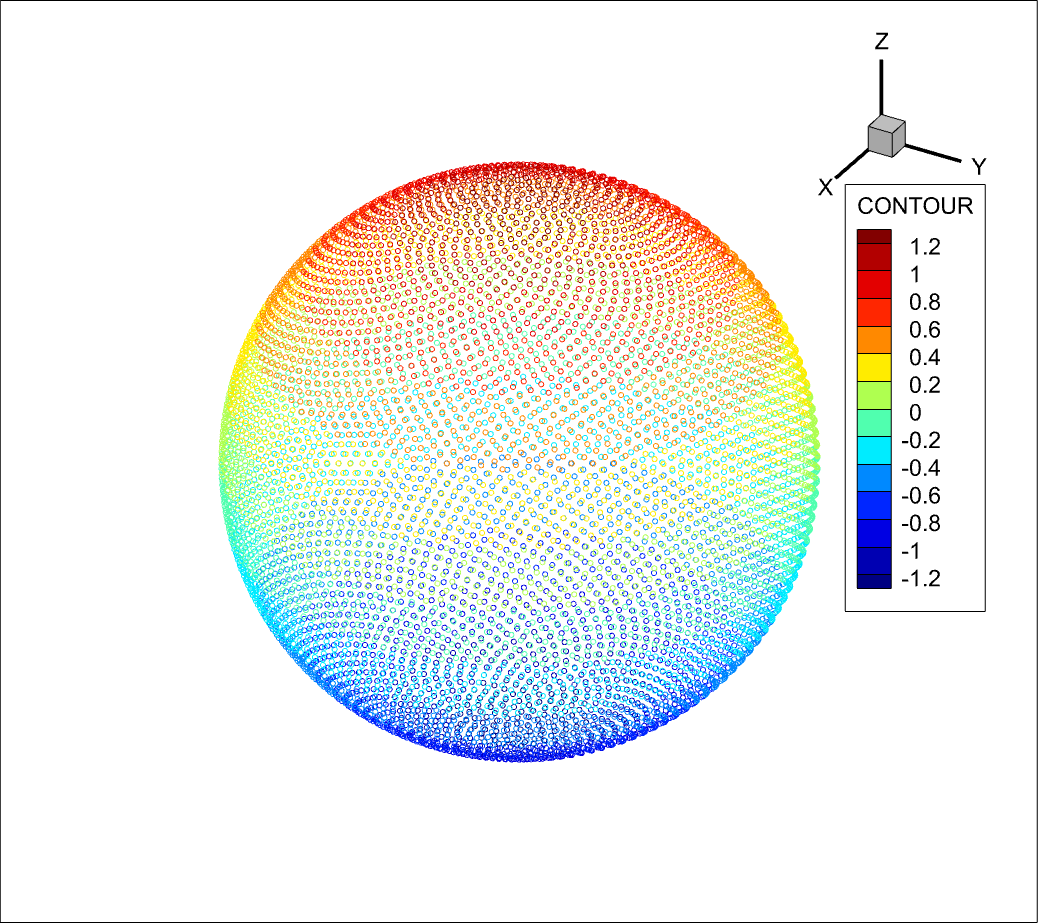}
\includegraphics[width=0.32\textwidth,trim=20 50 20 20,clip]{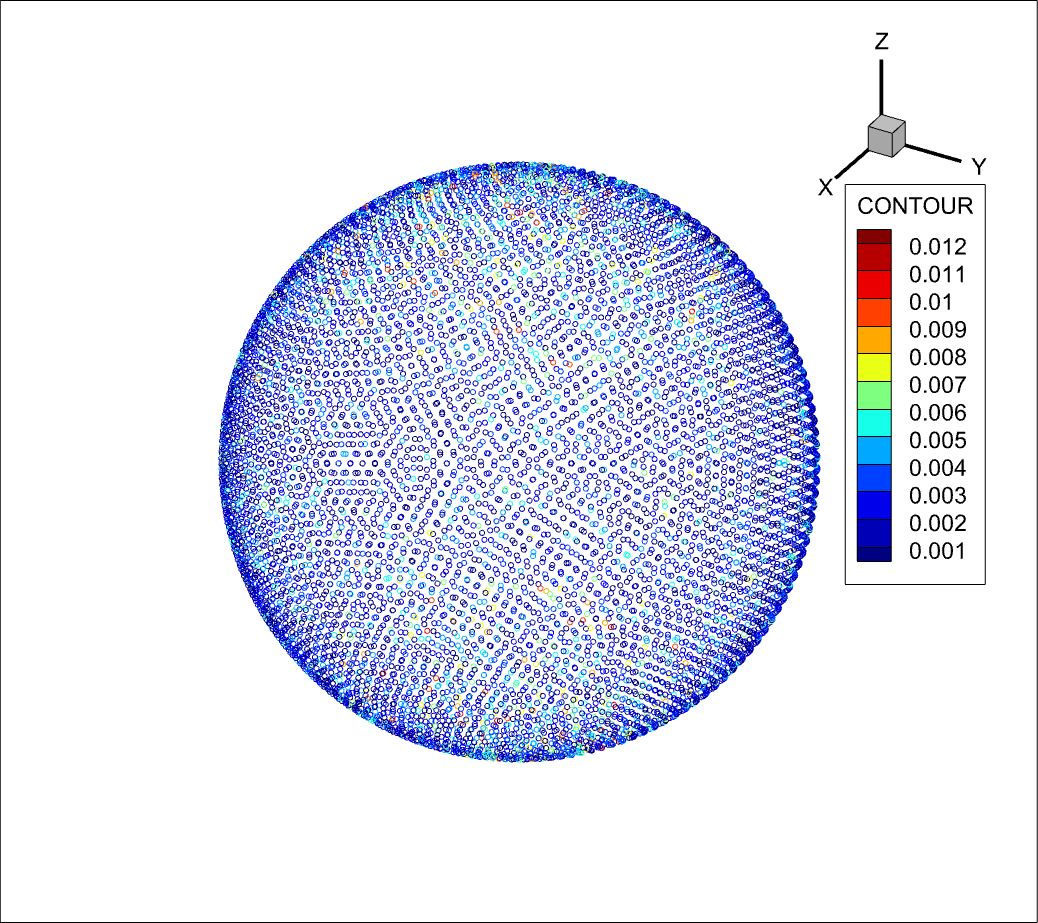}
\caption{Exact solution (left), numerical solution (middle), and absolute error (right) on the closed sphere represented by point cloud.}
\label{fig:sphere_pointcloud_solution}
\end{figure}

\textbf{Example 5} {\bf (Numerical solutions on complex surfaces represented by point clouds).}
In this example, we apply the proposed MPWoS method to two point-cloud surfaces, namely the Stanford bunny and the Stanford dragon, both treated as closed manifolds. The input point clouds are translated to their centroids and normalized by the bounding-box size. We solve \eqref{eq:surface_pde} on the resulting surfaces using the point-cloud radius rule \eqref{eq:rho_geo_spacing}, where the local scale is estimated from the average distance to the $k_{\mathrm{nn}}=10$ nearest neighbors and $c_h=4.0$. The closest-point projection is approximated by nearest-neighbor search. When the original triangular mesh is available, the computed solution is interpolated onto the mesh for visualization.

For both models, we employ the same source construction. After normalization, three representative centers $c_1,c_2,c_3$ are selected as the points with maximal $x$-coordinate, minimal $x$-coordinate, and maximal $z$-coordinate, respectively. We then define
\[
\tilde f_\Omega(x)
=
24\exp\Big(-\frac{|x-c_1|^2}{0.08^2}\Big)
-
18\exp\Big(-\frac{|x-c_2|^2}{0.11^2}\Big)
+
12\exp\Big(-\frac{|x-c_3|^2}{0.07^2}\Big),
\quad x\in\mathcal M,
\]
and then subtract its average over the point cloud, namely,
\[
f_\Omega(x_i)
=
\tilde f_\Omega(x_i)
-
\frac{1}{N_p}\sum_{j=1}^{N_p}\tilde f_\Omega(x_j),
\quad i=1,\dots,N_p,
\]
where $\{x_j\}_{j=1}^{N_p}$ denotes the normalized point cloud. In this way, the source term contains both positive and negative localized excitations while having zero mean on the discrete surface. 
Fig.~\ref{fig:dragon_solution} show the prescribed source terms and the numerical solutions on the bunny and dragon surfaces. The computed solutions remain smooth on the underlying geometry and reflect the spatial distribution of the source term in a stable manner. %In both cases, the numerical error is observed to stay at the level of $10^{-2}$. 
These results indicate that the proposed MPWoS method can be applied effectively to complex surfaces represented only by point-cloud data.

\begin{figure}[ht!]
\hspace*{-0.6cm}

\centering
\includegraphics[width=0.36\textwidth,trim=50 70 5 10,clip]{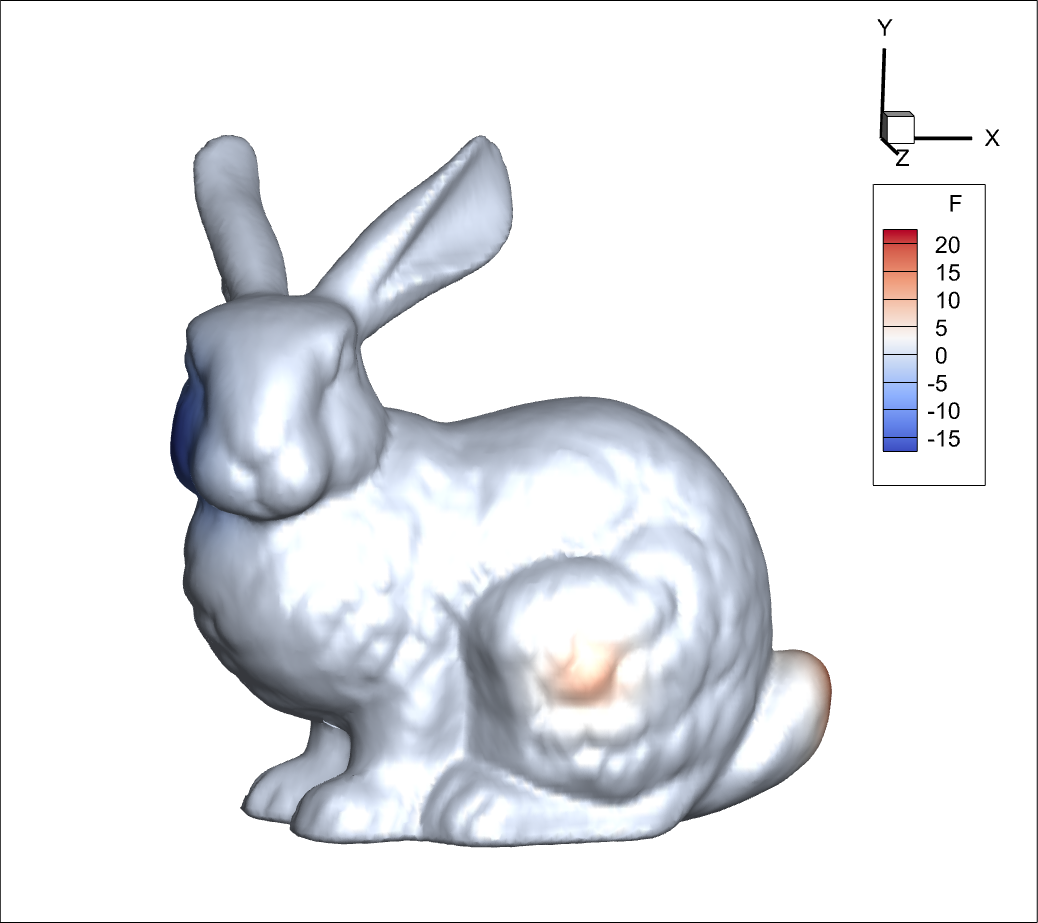}
\includegraphics[width=0.36\textwidth,trim=50 70 5 10,clip]{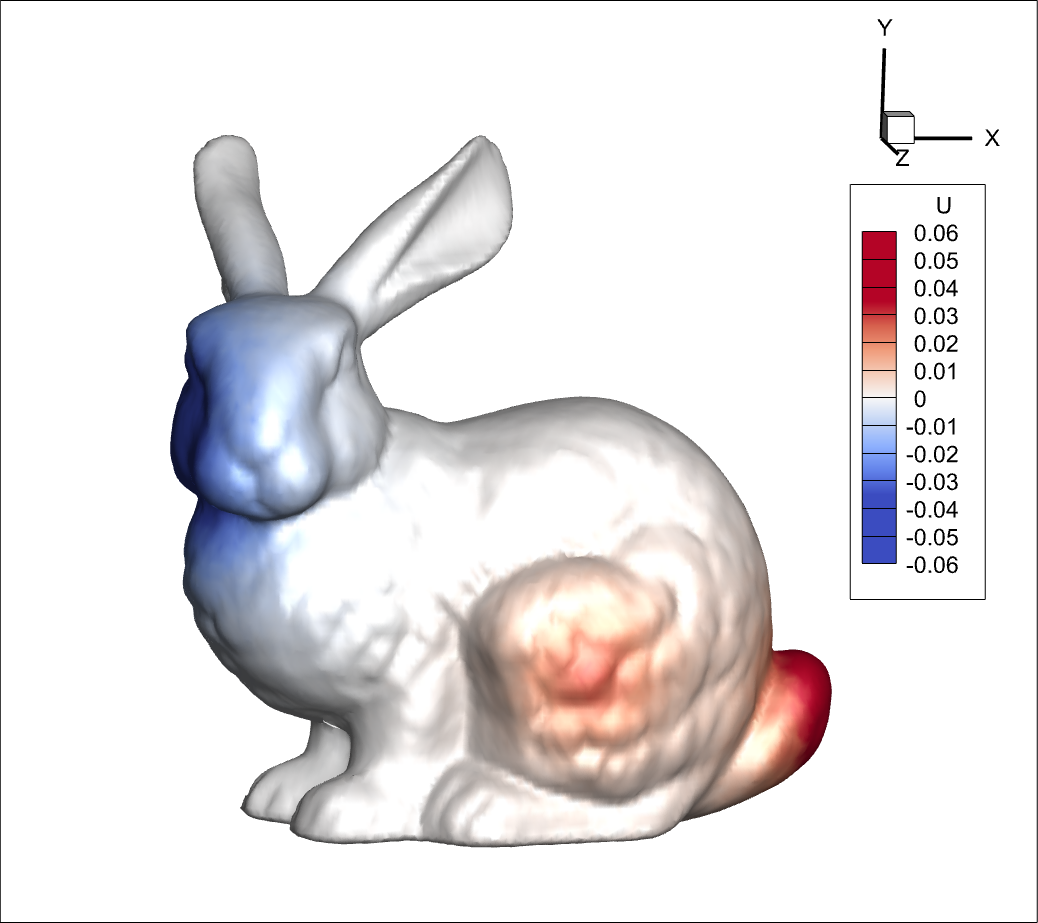}

\vspace{-2pt}
\includegraphics[width=0.36\textwidth,trim=50 70 5 10,clip]{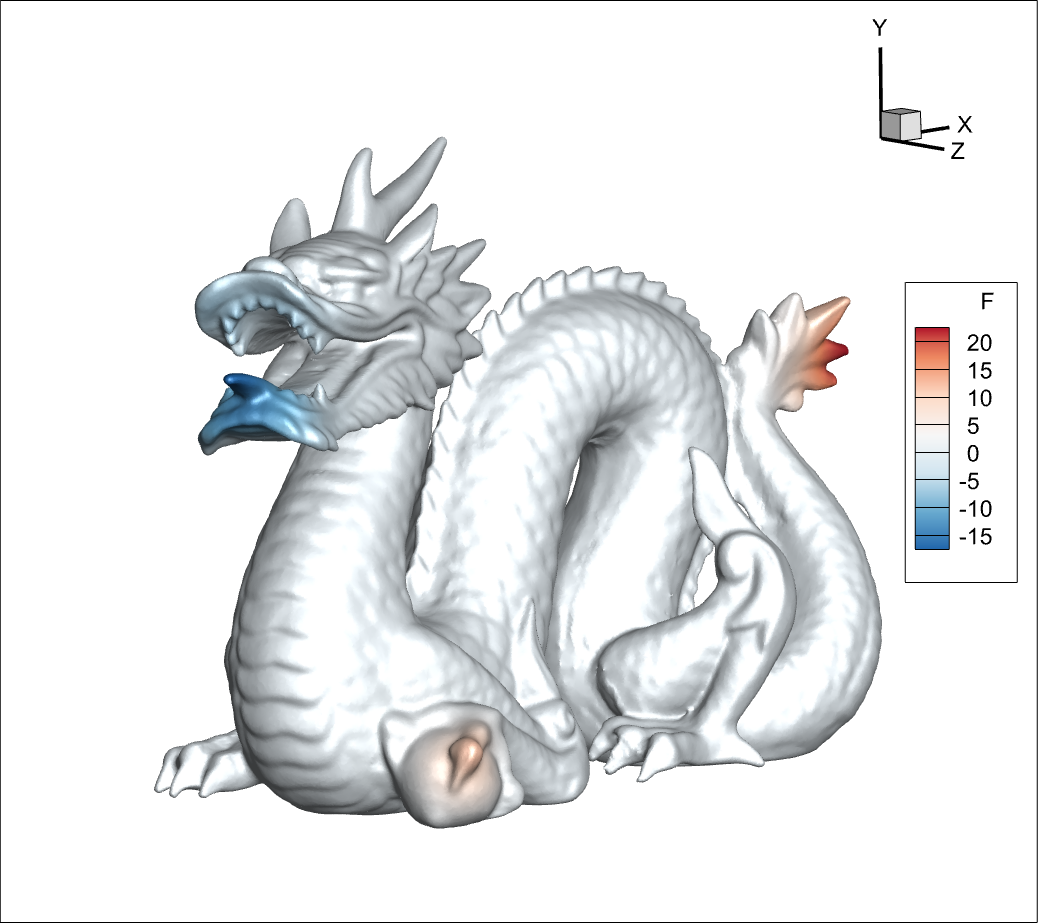}
\includegraphics[width=0.36\textwidth,trim=50 70 5 10,clip]{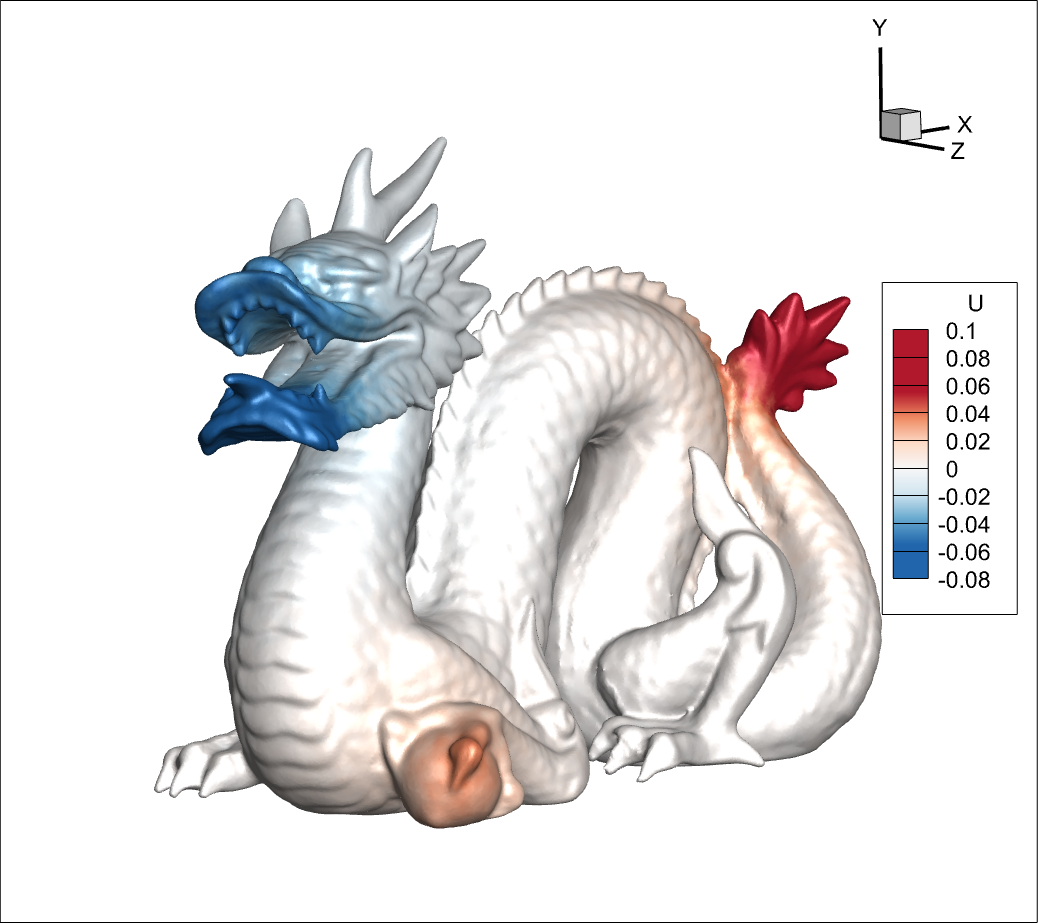}

%\includegraphics[width=0.36\textwidth,trim=20 50 5 5,clip]{figure/bunny_source.png}
%\includegraphics[width=0.36\textwidth,trim=20 50 5 5,clip]{figure/bunny_solution.png}
%%\caption{Source term (left) and numerical solution (right) on the Stanford bunny.}
%%\label{fig:bunny_solution}
%%\end{figure}
%%
%%\begin{figure}[ht!]
%\hspace*{0.6cm}
%\includegraphics[width=0.36\textwidth,trim=20 50 5 5,clip]{figure/dragon_source.png}
%\includegraphics[width=0.36\textwidth,trim=20 50 5 5,clip]{figure/dragon_solution.png}
\caption{Source term (left) and numerical solution (right) on the Stanford bunny (top) and Stanford dragon (bottom).}
\label{fig:dragon_solution}
\end{figure}

\section{Conclusion}
In this paper, we proposed a modified projected Walk on Spheres method for solving screened Poisson equations on embedded manifolds. The method is based on local sampling in the ambient Euclidean space together with closest-point projection, and applies to both manifolds with boundary and closed manifolds, including high dimensional embedded manifolds represented in parametric, implicit, or point-cloud form. We derived a local probabilistic representation and the associated projected recursion, and established mean square error estimates for the proposed Monte Carlo method in both the case of manifolds with boundary and the closed manifold case. The numerical experiments confirmed the expected half order convergence behavior of the proposed method and illustrated the effectiveness of the compensation term in reducing the mismatch error. They also showed that the method remains effective for manifolds of different dimensions and geometric complexity, including higher dimensional embedded manifolds and complex point cloud geometries such as the bunny and dragon. These results indicate that the proposed MPWoS method provides an effective stochastic solver for screened Poisson equations on manifolds.

% \vspace{18pt}

% \noindent{\bf Declarations}

% \begin{itemize}
% \item {\bf Availability of data and materials:}  The datasets generated during and/or analysed during the current study are available from the corresponding author on reasonable request. %The data that support the findings of this study are available from the corresponding author upon reasonable request.
% \item {\bf Authors' contributions:} All authors contributed to this study. The computations and the first draft were prepared by the first and second authors. All authors read and approved the final manuscript.
% \item {\bf Conflict of interest statement:}   We have no conflicts of interest to disclose.
% \end{itemize}

\end{document}